\DeclareSymbolFont{EulerExtension}{U}{euex}{m}{n}
\DeclareMathSymbol{\euintop}{\mathop} {EulerExtension}{"52}
\DeclareMathSymbol{\euointop}{\mathop} {EulerExtension}{"48}
\def \id{\operatorname{id}}
\def \C{\mathcal{C}}
\def \Z{\mathbb{Z}}
\def \k{\Bbbk}
\def \dim{\operatorname{dim}}
\def \Ext{\operatorname{Ext}}
\def \C{\mathcal{C}}
\def \D{\Delta}
\def \span{\operatorname{span}}
\def \A{\mathcal{A}}
\def \B{\mathcal{B}}
\def \C{\mathcal{C}}
\def \D{\mathcal{D}}
\def \E{\mathcal{E}}
\def \F{\mathcal{F}}
\def \G{\mathcal{G}}
\def \X{\mathcal{X}}
\def \Y{\mathcal{Y}}
\def \Z{\mathcal{Z}}
\numberwithin{equation}{section}
\newtheorem{theorem}{Theorem}[section]
\newtheorem{lemma}[theorem]{Lemma}
\newtheorem{proposition}[theorem]{Proposition}
\newtheorem{corollary}[theorem]{Corollary}
\newtheorem{definition}[theorem]{Definition}
\newtheorem{example}[theorem]{Example}
\newtheorem{remark}[theorem]{Remark}
\begin{document}
\title[Hopf algebras of discrete corepresentation type]{Hopf algebras with the dual Chevalley property of discrete corepresentation type}
\thanks{}
\author[J. Yu]{Jing Yu}
\author[G. Liu]{Gongxiang Liu}
\address{School of Mathematical Sciences, University of Science and Technology of China, Hefei 230026, China}
\email{dg21210018@smail.nju.edu.cn}
\address{School of Mathematics, Nanjing University, Nanjing 210093, China}
\email{gxliu@nju.edu.cn}

\thanks{2020 \textit{Mathematics Subject Classification}. 16T05, 16G60 (primary), 16G20, 16G10 (secondary).}
\keywords{Hopf algebras, Dual Chevalley property, Discrete corepresentation type}
\maketitle
\begin{abstract}
We aim to classify Hopf algebras with the dual Chevalley property of discrete corepresentation type over an algebraically closed field $\k$ with characteristic 0. For such Hopf algebra $H$, we characterize the link quiver of $H$ and determine the structures of the
link-indecomposable component $H_{(1)}$ containing $\k1$. Besides, we construct an infinite-dimensional non-pointed non-cosemisimple link-indecomposable Hopf algebra $H(e_{\pm 1}, f_{\pm 1}, u, v)$ with the dual Chevalley property of discrete corepresentation type.
\end{abstract}
\date{}
\section{Introduction}
Inspired by the Drozd's result (\cite{Dro79}), one is often interested in classifying a given kind of finite-dimensional algebras according to their representation type. In the case of Hopf algebras, much effort was put in pointed Hopf algebras or their dual, that is, basic Hopf algebras. See, for example, \cite{BD82, Cil97, Far06, FS02, FS07, FV00, FV03, Hig54, Sut94, Xia97}. The second author and his collaborators gave a classification of basic Hopf algebras according to their representation type from 2006 to 2013 \cite{LL07,HL09, Liu06, Liu13}. They showed that a finite-dimensional basic Hopf algebra is of finite representation type if and only if it is a Nakayama algebra.

Meanwhile, Hopf algebras with the (dual) Chevalley property is a kind of natural generalization of basic (pointed) Hopf algebras. These Hopf algebras have been studied intensively by many authors. See, for examples, \cite{ABM12, AEG01, AGM17, Li22, LZ19}.
In \cite{YLL24} and \cite{YL24}, the authors tried to classify finite-dimensional Hopf algebras with the dual Chevalley property according to their corepresentation type. Here by the dual Chevalley property we mean that the coradical $H_0$ is a Hopf subalgebra. They proved that a finite-dimensional Hopf algebra $H$ with the dual Chevalley property is of finite corepresentation type if and only if it is coNakayama, if and only if the link quiver $\mathrm{Q}(H)$ of $H$ is a disjoint union of basic cycles, if and only if the link-indecomposable component $H_{(1)}$ containing $\k1$ is a pointed Hopf algebra and the link quiver of $H_{(1)}$ is a basic cycle.

One of the most important topics in representation theory is the classification of indecomposable modules over an algebra. The category of finite-dimensional left (right) modules over a finite representation type algebra is considered easiest to understand.
However, concerning infinite-dimensional Hopf algebras, it is no longer appropriate to discuss the (co)representation finiteness. Instead, we shall consider (co)representation discrete type (co)algebras as analogs of finite dimensional (co)representation finite type (co)algebras. We say a coalgebra $C$ is of discrete corepresentation type if for any finite dimension vector $\underline{d}$, there are only finitely many non-isomorphic indecomposable right $H$-comodules of dimension vector $\underline{d}$.
The authors in \cite{Iov18a} and \cite{ISSZ24} classified pointed Hopf algebras of discrete corepresentation type over an algebraically closed field $\k$ with characteristic zero. For such Hopf algebra $H$, they explicitly determined the algebra structure up to isomorphism for the link-indecomposable component $H_{(1)}$ containing $\k1$.

The aim of this paper is to classify Hopf algebras with the dual Chevalley property of discrete corepresentation type. The main tool we want to use is the link quiver. In fact, one can describe the structures of the link quiver by applying multiplicative matrices and primitive matrices (see \cite{YLL24}).
Denote the set of all the simple subcoalgebras of a Hopf algebra $H$ with the dual Chevalley property by $\mathcal{S}$. We can view the set $^{\C}{\mathcal{P}}^{\D}$ of a complete family of non-trivial $(\C, \D)$-primitive matrices as the set of arrows from vertex $D$ to vertex $C$. Denote
$
{}^{\C}\mathcal{P}=\bigcup_{\D\in\mathcal{S}}{}^{\C}{\mathcal{P}}^{\D},
{\mathcal{P}}^{\D}=\bigcup_{\C\in\mathcal{S}}{}^{\C}{\mathcal{P}}^{\D}, \mathcal{P}=\bigcup_{\C\in\mathcal{S}} {}^{\C}\mathcal{P}.
$
We can also view ${\mathcal{P}^{\D}}$ as the set of arrows with start vertex $D$ and view ${^{\C}\mathcal{P}}$ as the set of arrows with end vertex $C$. This means that we can view $\mathrm{Q}(H)=(\mathcal{S}, \mathcal{P})$ as the link quiver of $H$.

Based on the consideration above, we characterize the link quiver of a non-cosemisimple Hopf algebra $H$ with the dual Chevalley property of discrete corepresentation type. Our main results are Theorems \ref{Thm:H10finite} and \ref{thm:H10infinite}, stating that:
\begin{theorem}
Let $H$ be a non-cosemisimple Hopf algebra over $\k$ with the dual Chevalley property and $H_{(1)}$ be its link-indecomposable component containing $\k1$. If the coradical of $H_{(1)}$ is finite-dimensional, then the following statements are equivalent:
\begin{itemize}
  \item[(1)]$H$ is of discrete corepresentation type;
  \item[(2)]Every vertex in $\mathrm{Q}(H)$ is both the start vertex of only one arrow and the end vertex of only one arrow, that is, $\mathrm{Q}(H)$ is a disjoint union of basic cycles;
  \item[(3)]There is only one arrow $C\rightarrow \k1$ in $\mathrm{Q}(H)$ whose end vertex is $\k1$ and $\dim_{\k}(C)=1$;
  \item[(4)]There is only one arrow $\k1\rightarrow D$ in $\mathrm{Q}(H)$ whose start vertex is $\k1$ and $\dim_{\k}(D)=1$.
\end{itemize}
\end{theorem}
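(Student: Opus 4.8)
The plan is to prove the equivalences through the cycle $(1)\Rightarrow(2)\Rightarrow(4)\Rightarrow(1)$ together with $(3)\Leftrightarrow(4)$, everything reduced to the link-indecomposable component $H_{(1)}$ — a Hopf algebra with the dual Chevalley property whose coradical $(H_{(1)})_{0}$ is finite-dimensional — and to the behaviour of $\mathrm{Q}(H)$ at the vertex $\k 1$. I will use freely that arrows of $\mathrm{Q}(H)$ are recorded by the primitive matrices ${}^{\C}\mathcal{P}^{\D}$ and that all connected components of $\mathrm{Q}(H)$ have the same underlying quiver (forgetting vertex dimensions) as the component $\mathrm{Q}(H_{(1)})$ of $\k 1$; the dual-Chevalley versions of these facts come from \cite{YLL24}. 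The implication $(3)\Leftrightarrow(4)$ is immediate from the antipode $S$ (bijective in the present setting): it is a coalgebra anti-isomorphism, hence induces an arrow-reversing automorphism of $\mathrm{Q}(H)$ that fixes $\k 1$ and carries a vertex $\C$ to $S(\C)$ with $\dim_{\k}S(\C)=\dim_{\k}\C$; so $\k 1$ is the end of a single arrow $C\to\k 1$ exactly when it is the start of a single arrow $\k 1\to S(C)$, with $\dim_{\k}C=\dim_{\k}S(C)$.

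The implication $(1)\Rightarrow(2)$ is the heart of the proof and I argue it contrapositively. Suppose $\mathrm{Q}(H)$ is not a disjoint union of basic cycles; by homogeneity of the components and finiteness of $(H_{(1)})_{0}$, the component $\mathrm{Q}(H_{(1)})$ is a finite connected quiver which is not a basic cycle. Such a quiver has a vertex that is the start of at least two arrows or the end of at least two arrows: the only other way to fail to be a basic cycle is to possess a source or a sink, and these are excluded because $S$ would furnish the other one and then, tracing the component while using that $H_{(1)}$ is a Hopf subalgebra, one derives a contradiction (this is the finite-dimensional style of argument in \cite{YLL24, YL24}). Having a vertex of in-degree or out-degree at least two then forces, again by tracing the component, a subquiver of $\mathrm{Q}(H_{(1)})$ whose underlying graph is not of Dynkin type, i.e. is extended Dynkin or wild. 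On such a subquiver one builds explicitly — working inside the comodule category of a finite-dimensional subcoalgebra assembled from $H_{0}$ and finitely many of the arrows, i.e. from the relevant piece of $H_{1}/H_{0}$ viewed as an $H_{0}$-bicomodule — a one-parameter family $\{M_{\lambda}\}_{\lambda\in\k}$ of pairwise non-isomorphic indecomposable right $H$-comodules all having one fixed dimension vector, contradicting discreteness. This is where the main difficulty lies: one must keep indecomposability and the dimension vector simultaneously under control while $H_{0}$ may be non-pointed, so that the Kronecker-type construction has to be run on the level of bicomodules (equivalently, on the separated/valued quiver) rather than on an honest path coalgebra; and it is discreteness, not mere corepresentation-infiniteness, that one has to violate — the family must genuinely live in a single dimension vector.

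For $(2)\Rightarrow(4)$: if $\mathrm{Q}(H)$ is a disjoint union of basic cycles then its connected component $\mathrm{Q}(H_{(1)})$ at $\k 1$ is a single basic cycle, and I claim that then $H_{(1)}$ is pointed, i.e. every vertex of $\mathrm{Q}(H_{(1)})$ is one-dimensional. This is the dual-Chevalley input of \cite{YLL24}: for a link-indecomposable Hopf algebra with the dual Chevalley property whose link quiver is a basic cycle, link-indecomposability together with the structure of the (cosemisimple, hence by Larson--Radford semisimple) coradical $(H_{(1)})_{0}$ — whose simple subcoalgebras are exactly the vertices of the cycle — forbids a matrix subcoalgebra. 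Granting this, the unique arrow out of $\k 1$ lands at a grouplike $D$, which is statement $(4)$.

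For $(4)\Rightarrow(1)$: by $(4)$ the unique arrow out of $\k 1$ ends at a grouplike $D$, so left multiplication $L_{D}\colon H\to H$ is a coalgebra automorphism taking $\k 1$ to $D$ and hence the unique arrow out of $\k 1$ to the unique arrow out of $D$, which must therefore end at the grouplike $D^{2}$; iterating and using that $(H_{(1)})_{0}$ is finite-dimensional, $D$ has finite order $n$ and the component of $\k 1$ is the basic cycle $\k 1\to D\to D^{2}\to\cdots\to D^{n-1}\to\k 1$ on one-dimensional vertices. By homogeneity every component of $\mathrm{Q}(H)$ is then a basic cycle — this is $(2)$ — and $H$ is coNakayama. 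Hence every indecomposable right $H$-comodule is uniserial and so determined up to isomorphism by its socle and its length; for a prescribed dimension vector $\underline{d}$ the length equals the sum of the entries of $\underline{d}$ along the cycle supporting the comodule, and once that length is known the socle is pinned down to finitely many choices. Thus there are only finitely many indecomposable right $H$-comodules of dimension vector $\underline{d}$, i.e. $H$ is of discrete corepresentation type.
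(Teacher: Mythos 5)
Your overall architecture agrees with the paper's: the equivalence of (2), (3) and (4) is the combinatorial content of Lemma \ref{lemma:P^1=1^P}(1) (the antipode symmetry $\mid{}^1\mathcal{P}\mid=\mid\mathcal{P}^1\mid$ together with the fact that all in- and out-degrees equal $1$ exactly when $\mid{}^1\mathcal{P}\mid=1$ and the unique simple subcoalgebra linked to $\k1$ is one-dimensional), and your coNakayama/uniserial count for $(2)\Rightarrow(1)$ is a mild variant of the paper's argument, which instead shows every finite-dimensional subcoalgebra is of finite corepresentation type and then bounds everything inside the finite-dimensional coalgebra $\operatorname{cf}(\underline{d})$. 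The genuine gap is in $(1)\Rightarrow(2)$, precisely at the point you yourself flag as ``where the main difficulty lies'': you assert that a vertex of in- or out-degree at least $2$ lets one ``build explicitly a one-parameter family $\{M_{\lambda}\}$'' of pairwise non-isomorphic indecomposables with one fixed dimension vector, but you never produce this family, and in the non-pointed setting (valued arrows, $H_0$-bicomodules rather than a path coalgebra) this construction is exactly the hard part. As written, discreteness is never actually violated and the implication is not proved.

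The paper closes this gap without any explicit family. Lemma \ref{lem:Dynkin} converts ``the separated quiver $\mathrm{Q}'_s$ of a finite subquiver is not a disjoint union of Dynkin diagrams'' into infinite representation type of $(\operatorname{Coalg}(\mathrm{Q}'))^*$, via the radical-square-zero algebra $\k\mathrm{Q}'/J^2$ and its stable equivalence with a hereditary algebra; Lemma \ref{lem:subinfinite} then invokes Bautista's theorem \cite[Theorem 2.4]{Bau85} (strong unboundedness, in the spirit of the second Brauer--Thrall property) to pass from infinite representation type to infinitely many indecomposables sharing a single dimension vector. The remaining combinatorics --- that a connected component which is not a basic cycle forces a non-Dynkin separated subquiver, using that every vertex has in- and out-degree at least $\mid{}^1\mathcal{P}\mid\geq 1$ --- is where the paper cites \cite[Lemma 4.12 and Theorem 5.6]{YLL24}, and your ``tracing the component'' matches that citation. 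To repair your route you must either carry out the one-parameter construction on the separated (valued) quiver for each Euclidean configuration that can occur, or replace it by the Bautista/Brauer--Thrall argument; with that substitution your proof coincides with the paper's.
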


\begin{theorem}
Let $H$ be a non-cosemisimple Hopf algebra over $\k$ with the dual Chevalley property of discrete corepresentation type and $H_{(1)}$ be its link-indecomposable component containing $\k1$. Denote ${}^1\mathcal{S}=\{C\in\mathcal{S}\mid \k1+C\neq \k1\wedge C\}$. If the coradical of $H_{(1)}$ is infinite-dimensional, then one of the following three cases occurs:
  \begin{itemize}
  \item[(1)]$\mid{}^1\mathcal{P}\mid=1$ and ${}^1\mathcal{S}=\{\k g\}$ for some $g\in G(H)$;
  \item[(2)]$\mid{}^1\mathcal{P}\mid=2$ and ${}^1\mathcal{S}=\{\k g, \k h\}$ for some different group-like elements $g, h$;
  \item[(3)]$\mid{}^1\mathcal{P}\mid=1$ and ${}^1\mathcal{S}=\{C_k\}$ for some $C_k\in\mathcal{S}$ with $\dim_{\k}(C_k)=4$.
  \end{itemize}
\end{theorem}
In addition, for a non-cosemisimple Hopf algebra $H$ over $\k$ with the dual Chevalley property of discrete corepresentation type, we determine the structures of the
link-indecomposable component $H_{(1)}$ containing $\k1$.
According to \cite[Proposition 4.14]{YLL24}, if all the simple subcoalgebras directly linked to $\k 1$ are $1$-dimensional, then the link-indecomposable component $H_{(1)}$ containing $\k 1$ is a pointed Hopf algebra. Thus $H_{(1)}$ is a pointed Hopf algebra of discrete corepresentation type, which has been classified in \cite{Iov18a} and \cite{ISSZ24}.

Finally we deal with the remaining case, namely, $\mid{}^1\mathcal{P}\mid=1$ and ${}^1\mathcal{S}=\{C_k\}$, where $\dim_{\k}(C_k)=4$. We give a description of the structures of Grothendieck ring $\operatorname{Gr}((H_{(1)})_0$-comod$)$ of the category of finite-dimensional right $(H_{(1)})_0$-comodules and characterize the link quiver of $H_{(1)}$ in Subsection \ref{subsection4.3}. Besides, we construct an infinite-dimensional non-pointed non-cosemisimple link-indecomposable Hopf algebra $H(e_{\pm 1}, f_{\pm 1}, u, v)$ with the dual Chevalley property of discrete corepresentation type (see Definition \ref{def:Hefuv}).

The organization of this paper is as follows: In Section \ref{section1}, we recall some basic facts about based ring and link quiver. In Section \ref{section2}, we introduce the concept of discrete corepresentation type and formulate a necessary criterion for discreteness. In Section \ref{section3}, we characterize the link quiver of a non-cosemisimple Hopf algebra $H$ with the dual Chevalley property of discrete corepresentation type. Moreover, we determine the structures of the
link-indecomposable component $H_{(1)}$ containing $\k1$. At last, we construct an infinite-dimensional non-pointed non-cosemisimple link-indecomposable Hopf algebra $H(e_{\pm 1}, f_{\pm 1}, u, v)$ with the dual Chevalley property of discrete corepresentation type in Section \ref{section4}.
\section{Preliminaries}\label{section1}
Throughout this paper $\k$ denotes an \textit{algebraically closed field of characteristic $0$} and all spaces are over $\k$. The tensor product over $\k$ is denoted simply by $\otimes$.
The reader is referred to \cite{Mon93}, \cite{ARS95} and \cite{ASS06} for the basics about Hopf algebras and representation theory.
\subsection{Based ring}\label{subsection1.1}
Let us first recall the definition of multiplicative matrices.
\begin{definition}\emph{(}\cite[Definition 2.3]{Li22}\emph{)}
Let $(H,\Delta,\varepsilon)$ be a coalgebra over $\k$.
\begin{itemize}
  \item[(1)] A square matrix $\G=(g_{ij})_{r\times r}$ over $H$ is said to be multiplicative, if for any $1\leq i,j \leq r$, we have $\Delta(g_{ij})=\sum\limits_{t=1}^r g_{it}\otimes g_{tj}$ and $\varepsilon(g_{ij})=\delta_{i, j}$, where $\delta_{i, j}$ denotes the Kronecker notation;
  \item[(2)] A multiplicative matrix $\C$ is said to be basic, if its entries are linearly independent.
\end{itemize}
\end{definition}
Multiplicative matrices over a coalgebra can be understood as a generalization of group-like elements. The entries of a basic multiplicative matrix $\C$ span a simple subcoalgebra $C$ of $H$. Conversely, for any simple coalgebra $C$ over
$\k$, there exists a basic multiplicative matrix $\C$ whose entries span $C$ (for details, see \cite{LZ19}, \cite{Li22}).
According to \cite[Lemma 2.4]{Li22}, the basic multiplicative matrix of the simple coalgebra $C$ would be unique up to the similarity relation.

Let $\Bbb{Z}_+$ be the set of nonnegative integers. Some relevant concepts are recalled as follows.
\begin{definition}\emph{(}\cite[Definitions 2.1 and 2.2]{Ost03}\emph{)}\label{def:based}
Let $A$ be an associative ring with unit which is free as a $\Bbb{Z}$-module.
\begin{itemize}
  \item[(1)]A $\Bbb{Z}_+$-basis of $A$ is a basis $B=\{b_{i}\}_{i\in I}$ such that $b_ib_j=\sum_{t\in I}c_{ij}^tb_t$, where $c_{ij}^t\in\Bbb{Z}_+$.
  \item[(2)]A ring with a fixed $\Bbb{Z}_+$-basis $\{b_i\}_{i\in I}$ is called a unital based ring if the following conditions hold:
  \begin{itemize}
  \item[(i)]$1$ is a basis element.
  \item[(ii)]Let $\tau: A\rightarrow \Bbb{Z}$ denote the group homomorphism defined by
  $$\tau(b_i)=\left\{
\begin{aligned}
1,~~~  \text{if} ~~~ b_i=1, \\
0,~~~  \text{if} ~~~ b_i\neq1.
\end{aligned}
\right.$$
There exists an involution $i \mapsto i^*$ of $I$ such that the induced map
$$a=\sum\limits_{i\in I}a_ib_i \mapsto a^*=\sum\limits_{i\in I}a_ib_{i^*},\;\; a_i\in \Bbb{Z}$$ is an anti-involution of $A$, and
$$\tau(b_ib_j)=\left\{
\begin{aligned}
1,~~~  \text{if} ~~~ i=j^*, \\
0,~~~  \text{if} ~~~ i\neq j^*.
\end{aligned}
\right.$$
  \end{itemize}
\end{itemize}
\end{definition}
Recall that a finite-dimensional Hopf algebra is said to have the dual Chevalley property, if its coradical $H_0$ is a Hopf subalgebra. In this paper, we still use the term \textit{dual Chevalley property} to indicate a Hopf algebra $H$ with its coradical $H_0$ as a Hopf subalgebra, even if $H$ is infinite-dimensional.

In the following part, let $H$ be a Hopf algebra over $\k$ with the dual Chevalley property. Denote the coradical filtration of $H$ by $\{H_n\}_{n\geq0}$ and the set of all the simple subcoalgebras of $H$ by $\mathcal{S}$.

For any matrix $\A=(a_{ij})_{r\times s}$ and $\B=(b_{ij})_{u\times v}$ over $H$, define $\A\odot^\prime \B$ as follows:
 $$
\A\odot^\prime \B=\left(\begin{array}{cccc}
      \A b_{11} &   \cdots &  \A b_{1v} \\
      \vdots &  \ddots & \vdots  \\
      \A b_{u1} &  \cdots & \A b_{uv}
    \end{array}\right).$$

For any $B, C\in\mathcal{S}$, let $\mathcal{B}, \mathcal{C}$ be their basic multiplicative matrices, respectively.
Since $H$ has the dual Chevalley property, \cite[Proposition 2.6(2)]{Li22} implies there exists an invertible matrix $L$ over $\k$ such that
\begin{equation}\label{equationCD}
L
(\B\odot^{\prime}\C) L^{-1}=
\left(\begin{array}{cccc}
      \E_1 & 0 & \cdots & 0  \\
      0 & \E_2 & \cdots & 0  \\
      \vdots & \vdots & \ddots & \vdots  \\
      0 & 0 & \cdots & \E_t
    \end{array}\right),
    \end{equation}
where $\E_1, \E_2, \cdots, \E_t$ are basic multiplicative matrices over $H$.
Define a multiplication on $\Bbb{Z}\mathcal{S}$ as follows: for $B, C\in \mathcal{S}$,
$$B\cdot C=\sum\limits_{i=1}^t E_i,$$
where $E_1, \cdots, E_t\in\mathcal{S}$ are well-defined with basic multiplicative matrices $\E_i$ as in (\ref{equationCD}).
\begin{remark}
Observe that the equality $$B\cdot C=\sum\limits_{i=1}^t E_i$$ in $\Bbb{Z}\mathcal{S}$ implies $$\sqrt{\dim_{\Bbbk}(B)}\sqrt{\dim_{\Bbbk}(C)}=\sum\limits_{i=1}^t \sqrt{\dim_{\Bbbk}(E_i)}.$$
\end{remark}
Let $S$ be the antipode of $H$. By \cite[Theorem 3.3]{Lar71}, the map $C\mapsto S(C)$ defines an anti-involution. With the multiplication and anti-involution defined above, we state the following lemma.
\begin{lemma}\emph{(}\cite[Proposition 4.3]{YLL24}\emph{)}\label{Prop:basedring}
Let $H$ be a Hopf algebra over $\k$ with the dual Chevalley property and $\mathcal{S}$ be the set of all the simple subcoalgebras of $H$. Then $\Bbb{Z}\mathcal{S}$ is a unital based ring with $\Bbb{Z}_+$-basis $\mathcal{S}$.
\end{lemma}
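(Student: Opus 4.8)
**The plan is to verify the axioms of a unital based ring directly for $\Z\mathcal{S}$ with the $\Z_+$-basis $\mathcal{S}$, reducing everything to properties of basic multiplicative matrices and the decomposition (\ref{equationCD}).** First I would check that the multiplication is well-defined. The potential ambiguity is the choice of basic multiplicative matrices $\B,\C$ representing $B,C$ and the choice of $L$ in (\ref{equationCD}); since such matrices are unique up to similarity (cited from \cite[Lemma 2.4]{Li22}) and $\odot'$ is compatible with similarity transformations on each factor, a different choice replaces the list $\E_1,\dots,\E_t$ by a similar list (in a block-permuted, block-conjugated form), hence the multiset $\{E_1,\dots,E_t\}$ of simple subcoalgebras is unchanged. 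I would also note that the structure constants $c_{ij}^t$ counting how many times $E$ appears in $B\cdot C$ lie in $\Z_+$ by construction, so $\mathcal{S}$ is genuinely a $\Z_+$-basis.

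Next I would establish associativity and unitality. Unitality is immediate: $\k 1$ is a simple subcoalgebra with basic multiplicative matrix the $1\times 1$ matrix $(1)$, and $(1)\odot'\C=\C$, $\C\odot'(1)=\C$ up to reindexing, so $\k1$ acts as the identity and is a basis element. Associativity of $\cdot$ should follow from associativity of the tensor product of comodules: the entries of a basic multiplicative matrix $\C$ span a simple subcoalgebra $C$, equivalently a simple right $H$-comodule (its dual); the operation $\B\odot'\C$ corresponds to the tensor product of the associated comodules, and the decomposition (\ref{equationCD}) is exactly the decomposition of that tensor product into simples. Both $(B\cdot C)\cdot D$ and $B\cdot(C\cdot D)$ then compute the simple constituents of the triple tensor product, which agree by the associativity and additivity constraints — alternatively one can argue purely matrix-theoretically that $(\A\odot'\B)\odot'\C$ and $\A\odot'(\B\odot'\C)$ are similar (a reshuffle of Kronecker-type products), and push the block decomposition through.

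For the unital based ring structure I would take the involution $i\mapsto i^*$ on the index set $\mathcal{S}$ to be $C\mapsto S(C)$, which is an anti-involution of $\mathcal{S}$ by \cite[Theorem 3.3]{Lar71}; extending $\Z$-linearly gives an anti-automorphism of $\Z\mathcal{S}$. The homomorphism $\tau$ of the definition is the map sending $\k1\mapsto 1$ and every other simple subcoalgebra to $0$; one must check $\tau(B\cdot C)=1$ if $C=S(B)$ and $0$ otherwise, i.e. that $\k1$ occurs in $B\cdot C$ with multiplicity exactly $\delta_{C,S(B)}$ (and multiplicity one when it does). This is the content of the fact that $\Hom$ between the simple comodules dual to $B$ and $C$ is one-dimensional precisely when they are dual to each other, which translates into a counting statement about the trivial summand in $\B\odot'\C$; this, together with the analogous statement that the anti-involution is compatible with $\tau$, is where I expect to lean most heavily on \cite{Li22} and \cite{Lar71}.

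**The main obstacle is the well-definedness and associativity of the multiplication**, specifically controlling how the block decomposition (\ref{equationCD}) behaves under the various similarity choices and under reassociation of $\odot'$; once these bookkeeping issues are handled, the identity element, the anti-involution from the antipode, and the compatibility with $\tau$ are comparatively formal, being faithful transcriptions of the coradical-is-a-Hopf-subalgebra hypothesis and Larson's theorem on the antipode permuting simple subcoalgebras. Since this is exactly \cite[Proposition 4.3]{YLL24}, I would in practice cite that proof and only sketch the comodule-theoretic interpretation that makes each axiom transparent.
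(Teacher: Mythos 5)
Your outline is correct: the paper itself gives no proof of this lemma, simply quoting it from \cite[Proposition 4.3]{YLL24}, and your sketch (well-definedness via uniqueness of basic multiplicative matrices up to similarity, associativity via the comodule tensor product underlying $\odot'$, the unit $\k1$, the involution $C\mapsto S(C)$ from Larson's theorem, and the multiplicity-one count of the trivial summand for $\tau$) is a faithful reconstruction of the argument in that reference. Since you also ultimately defer to the same citation, your approach coincides with the paper's.
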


\begin{remark}
By Definition \ref{def:based} (2) (ii), for any simple subcoalgebra $C$, the term $\k 1$ appears exactly once in the direct sum decomposition of $S(C) \cdot C$.
\end{remark}

Let $\mathcal{F}$ be the free abelian group generated by isomorphism classes of finite-dimensional right $H_0$-comodules and $\mathcal{F}_0$ the subgroup of $\mathcal{F}$ generated by all expressions $[Y]-[X]-[Z]$, where $0\rightarrow X\rightarrow Y\rightarrow Z\rightarrow0$ is a short exact sequence of finite-dimensional right $H_0$-comodules.
Recall that the \textit{Grothendieck group} $\operatorname{Gr}(H_0$-comod$)$ of the category of finite-dimensional right $H_0$-comodules is defined by $$\operatorname{Gr}(H_0\text{-comod}):=\mathcal{F}/\mathcal{F}_0.$$
From \cite[Proposition 4.5.4]{EGNO15} and \cite[Theorem 2.7]{Lar71}, $\operatorname{Gr}(H_0$-comod$)$ is a unital based ring with $\Bbb{Z}_+$-basis $\mathcal{V}$, where $\mathcal{V}$ is the set of all the isomorphism classes of simple right $H_0$-comodules.

Let $(M, \rho)$ be a finite-dimensional right comodule over a coalgebra $H^\prime$, where $\rho: M\rightarrow M\otimes H^\prime$. The coefficient coalgebra $\operatorname{cf}(M)$ of $M$ is the smallest subcoalgebra of $H^\prime$ satisfying $\rho(M)\subseteq M\otimes \operatorname{cf}(M)$. One can show that:
\begin{lemma}\label{lem:Grring}
Let $H$ be a Hopf algebra over $\k$ with the dual Chevalley property and $\mathcal{S}$ be the set of all the simple subcoalgebras of $H$. Then $\operatorname{Gr}(H_0$-comod$)$ is isomorphic to $\Bbb{Z}\mathcal{S}$ as unital based rings.
\end{lemma}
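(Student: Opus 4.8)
The plan is to identify both unital based rings $\operatorname{Gr}(H_0\text{-comod})$ and $\Z\mathcal{S}$ explicitly on the level of their canonical $\Z_+$-bases and then check that the obvious bijection between these bases is a ring isomorphism compatible with the anti-involutions. First I would recall that since $H$ has the dual Chevalley property, $H_0$ is a cosemisimple Hopf algebra, so every finite-dimensional right $H_0$-comodule is a direct sum of simple ones, and the simple right $H_0$-comodules are (up to isomorphism) in bijection with the simple subcoalgebras of $H_0$, which are exactly the elements of $\mathcal{S}$ (since $H_0$ is the coradical of $H$, its simple subcoalgebras coincide with those of $H$). Concretely, for a simple subcoalgebra $C\in\mathcal{S}$ with basic multiplicative matrix $\C=(c_{ij})_{r\times r}$, the associated simple comodule $M_C$ has a basis $\{m_1,\dots,m_r\}$ with $\rho(m_j)=\sum_i m_i\otimes c_{ij}$, and one checks $\operatorname{cf}(M_C)=C$. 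This gives a bijection $\Phi:\mathcal{S}\to\mathcal{V}$, $C\mapsto [M_C]$, which extends $\Z$-linearly to a group isomorphism $\Z\mathcal{S}\to\operatorname{Gr}(H_0\text{-comod})$.

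The core step is to verify multiplicativity: for $B,C\in\mathcal{S}$ with basic multiplicative matrices $\B,\C$, I must show $\Phi(B)\Phi(C)=\Phi(B\cdot C)$, i.e.\ $[M_B\otimes M_C]=\sum_{i=1}^t[M_{E_i}]$ in $\operatorname{Gr}(H_0\text{-comod})$, where the $E_i$ are defined via the block-diagonalization (\ref{equationCD}) of $\B\odot'\C$. The point is that the tensor product comodule $M_B\otimes M_C$ has, in the tensor basis $\{m_k\otimes n_\ell\}$, structure matrix precisely $\B\odot'\C$ (this is a direct computation from the definition of $\odot'$ and the comodule structure maps), so $\operatorname{cf}(M_B\otimes M_C)$ is the subcoalgebra spanned by the entries of $\B\odot'\C$. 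Conjugating by the invertible scalar matrix $L$ of (\ref{equationCD}) is exactly a change of basis of $M_B\otimes M_C$, under which the structure matrix becomes block-diagonal with blocks $\E_1,\dots,\E_t$; hence $M_B\otimes M_C\cong \bigoplus_{i=1}^t M_{E_i}$ as right $H_0$-comodules, which gives the desired identity in the Grothendieck ring (in fact an honest isomorphism, so no extension-splitting subtlety arises because $H_0$ is cosemisimple). One also checks $\Phi(\k1)=[\k]$, the class of the trivial comodule, matching the units, and that $\Phi$ intertwines the anti-involution $C\mapsto S(C)$ on $\mathcal{S}$ with the duality anti-involution on $\mathcal{V}$; this follows since the dual comodule $M_C^*$ has structure matrix obtained from $\C$ by applying $S$ entrywise and transposing, whose span is $S(C)$, combined with \cite[Theorem 3.3]{Lar71}.

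I expect the main obstacle to be purely bookkeeping: carefully matching the combinatorial recipe "$B\cdot C=\sum E_i$ via block-diagonalizing $\B\odot'\C$" with the representation-theoretic statement "$M_B\otimes M_C$ decomposes as $\bigoplus M_{E_i}$." The subtlety is to confirm that $\odot'$ as defined in the excerpt really computes the structure matrix of the tensor product comodule in the natural ordered tensor basis (including getting the order of factors and the indexing of the Kronecker-type product right), and that the similarity transformation in (\ref{equationCD}) is by a matrix over $\k$ — which it is, by \cite[Proposition 2.6(2)]{Li22} — so that it legitimately corresponds to a comodule isomorphism rather than merely a coalgebra isomorphism. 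Once these compatibilities are pinned down, that $\Phi$ is an isomorphism of unital based rings (i.e.\ sends $\Z_+$-basis to $\Z_+$-basis, unit to unit, and respects the anti-involution) is immediate, and the structure constants agree by construction. All of this can be assembled cleanly using Lemma \ref{Prop:basedring} to know a priori that $\Z\mathcal{S}$ is a unital based ring, so it only remains to exhibit $\Phi$ and check it preserves the relevant structure.
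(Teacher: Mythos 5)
Your proposal is correct and is essentially the paper's own argument: the paper defines the inverse map $M\mapsto\operatorname{cf}(M)$ from $\operatorname{Gr}(H_0\text{-comod})$ to $\Bbb{Z}\mathcal{S}$, establishes the bijection between simple comodules and simple subcoalgebras exactly as you do, and reduces multiplicativity to the identity $\operatorname{cf}(V_i\otimes V_j)=\operatorname{cf}(V_i)\operatorname{cf}(V_j)$, which is precisely the content of your computation identifying the structure matrix of $M_B\otimes M_C$ with $\B\odot'\C$ and its block-diagonalization. Your write-up merely supplies more detail on that key step (and on the unit and anti-involution), which the paper leaves implicit.
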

\begin{proof}
 Define
\begin{eqnarray*}
F:\operatorname{Gr}(H_0\text{-comod})&\rightarrow&\Bbb{Z}\mathcal{S},\\
 M\;\;\;\;&\mapsto &\operatorname{cf}(M).
\end{eqnarray*}
Next we show that $F$ is a ring isomorphism.
In fact, since $H_0$ is cosemisimple, it follows that $M$ is a completely irreducible right $H_0$-comodule. In other words, there are simple right $H_0$-comodules $V_1, V_2, \cdots, V_t$ such that $M=\bigoplus_{1\leq i\leq t}V_i$. Note that for any simple right $H_0$-comodule $V_i$, its coefficient coalgebra $\operatorname{cf}(V_i)$ is a simple subcoalgebra of $H$.
If $V_i$ and $V_j$ are non-isomorphic as right $H_0$-comodules, it is apparent that $\operatorname{cf}(V_i)$ and $\operatorname{cf}(V_j)$ are non-isomorphic as subcoalgebras. This means that $F$ is injective. Furthermore, for any $C\in\mathcal{S}$, any simple right $C$-comodule $X$ is a simple $H_0$-comodule and the coefficient coalgebra of $X$ is $C$. One can show that $F$ is surjective.
Using the fact that the coefficient coalgebra $\operatorname{cf}(V_i\otimes V_j)$ of $V_i\otimes V_j$ is $\operatorname{cf}(V_i)\operatorname{cf}(V_j)$, we get that $F$ is a ring isomorphism.
\end{proof}

\subsection{Link quiver}\label{subsection1.2}
In this subsection, let $(H,\Delta,\varepsilon)$ be a coalgebra over $\k$.
Denote the coradical filtration of $H$ by $\{H_n\}_{n\geq0}$ and the set of all the simple subcoalgebras of $H$ by $\mathcal{S}$.
Now let us recall the concept of link quiver.
\begin{definition}\emph{(}\cite[Definition 4.1]{CHZ06}\emph{)}
Let $H$ be a coalgebra over $\k$. Denote the set of all the simple subcoalgebras of $H$ by $\mathcal{S}$. The link quiver $\mathrm{Q}(H)$ of $H$ is defined as follows: the vertices of $\mathrm{Q}(H)$ are the elements of $\mathcal{S}$; for any simple subcoalgebra $C, D\in \mathcal{S}$ with $\dim_{\k}(C)=r^2, \dim_{\k}(D)=s^2$, there are exactly $\frac{1}{rs}\dim_{\k}((C\wedge D)/(C+D))$ arrows from $D$ to $C$.
\end{definition}
Next we will discuss the properties for the link quiver. Before proceeding further, let us recall the definition of primitive matrices, which is a non-pointed analogue of primitive elements.
\begin{definition}\emph{(}\cite[Definition 3.2]{LZ19}\emph{)}
Let $(H,\Delta,\varepsilon)$ be a coalgebra over $\k$. Suppose $\C=(c_{ij})_{r\times r}$ and $\D=(d_{ij})_{s\times s}$ are basic multiplicative matrices over $H$.
\begin{itemize}
  \item[(1)] A matrix $\X=(x_{ij})_{r\times s}$ over $H$ is said to be $(\C, \D)$-primitive, if $$\Delta(x_{ij})=\sum\limits_{k=1}^r c_{ik}\otimes x_{kj}+\sum\limits_{t=1}^s x_{it}\otimes d_{tj}$$ holds for any $1\leq i,j \leq r$;
  \item[(2)] A primitive matrix $\X$ is said to be non-trivial, if there exists some entry of $\X$ which does not belong to the coradical $H_0$.
\end{itemize}
\end{definition}
For any matrix $\X=\left(x_{ij}\right)_{r\times s}$ over $H_1$,  denote the matrix $\left(\overline{x_{ij}}\right)_{r\times s}$ by $\overline{\X}$, where $\overline{x_{ij}}=x_{ij}+H_0\in H_1/H_0$. Besides, the subspace of $H_1/H_0$ spanned by the entries of $\overline{\X}$ is denoted by $\operatorname{span}(\overline{\X})$.

Let $C, D\in\mathcal{S}$ with basic multiplicative matrices $\C$ and $\D$, respectively.
According to \cite[Corollary 2.11 and Lemma 2.17]{YLL24}, we know that there exists a family of non-trivial $(\C, \D)$-primitive matrices $\{\X^{(\gamma_{\C, \D})}\}_{\gamma_{\C, \D}\in\Gamma_{\C, \D}}$, which is said to be complete, such that
$(C\wedge D)/(C+D)\cong \bigoplus\limits_{\gamma_{\C, \D}\in\Gamma_{\C, \D}}\span(\overline{\X^{(\gamma_{\C, \D})}}).$ Using \cite[Corollary 2.18]{YLL24}, we can transform the problem of number of arrows from vertex $D$ to vertex $C$ in the link quiver of $H$ to the problem of cardinal number of a complete family of non-trivial $(\C, \D)$-primitive matrices.

Let ${}^{\C}{\mathcal{P}}^{\D}$ be the set of a complete family of non-trivial $(\C, \D)$-primitive matrices. Denote
\begin{eqnarray*}
{}^{\C}\mathcal{P}&=&\bigcup_{\D\in\mathcal{S}}{}^{\C}{\mathcal{P}}^{\D},\\
{\mathcal{P}}^{\D}&=&\bigcup_{\C\in\mathcal{S}}{}^{\C}{\mathcal{P}}^{\D},\\
\mathcal{P}&=&\bigcup_{\C\in\mathcal{S}} {}^{\C}\mathcal{P}.
\end{eqnarray*}
Now we can view $^{\C}{\mathcal{P}}^{\D}$ as the set of arrows from vertex $D$ to vertex $C$, view ${\mathcal{P}^{\D}}$ as the set of arrows with start vertex $D$ and view ${^{\C}\mathcal{P}}$ as the set of arrows with end vertex $C$. This means that we can view $\mathrm{Q}(H)=(\mathcal{S}, \mathcal{P})$ as the link quiver of $H$.

In the following part, let $H$ be a Hopf algebra over $\k$ with the dual Chevalley property.
In \cite[Section 3]{YLL24}, the authors gave two different constructions of a complete family of non-trivial $(\C, \D)$-primitive matrices over $H$ for any $C, D\in\mathcal{S}$ with basic multiplicative matrices $\C, \D$, respectively. Let us briefly recall one of the constructions.

Let $\mathcal{M}$ denote the set of representative elements of basic multiplicative matrices over $H$ for the similarity class.
Denote ${}^1\mathcal{S}=\{C\in\mathcal{S}\mid \k1+C\neq \k1\wedge C\}$. For any $C\in{}^1\mathcal{S}$ with basic multiplicative matrix $\C\in\mathcal{M}$, we can fix a complete family $\{\mathcal{X}^{(\gamma_{1, C})}\}_{\gamma_{1, \C}\in\Gamma_{1, \C}}$ of non-trivial $(1, \C)$-primitive matrices.

Denote
\begin{eqnarray*}
{^1\mathcal{P}}:=\bigcup\limits_{C\in {}^1\mathcal{S}} \{\mathcal{X}^{(\gamma_{1, \mathcal{C}})}\mid \gamma_{1, \mathcal{C}}\in\Gamma_{1, \mathcal{C}}\}.
\end{eqnarray*}
For any non-trivial $(1, \C)$-primitive matrix $\Y\in{^1\mathcal{P}}$ and $\B\in\mathcal{M}$, we have
\begin{eqnarray*}
\left(\begin{array}{cc}
I&0\\
0&L_{\B, \C}
 \end{array}\right)
\left(\B\odot^\prime
\left(\begin{array}{cc}
1&\Y\\
0&\C
 \end{array}\right)\right)
 \left(\begin{array}{cc}
I&0\\
0&L_{\B, \C}^{-1}
 \end{array}\right)
=\left(\begin{array}{ccccccc}
    \mathcal{B}  & {\mathcal{Y}_{ 1}} & {\mathcal{Y}_{ 2}} & \cdots & {\mathcal{Y}_{ u_{(\mathcal{B}, \mathcal{C})}}}  \\
    0 &   \mathcal{E}_{1} &0&\cdots  &  0 \\
    0& 0&\mathcal{E}_{2}&\cdots &0\\
    \vdots  &\vdots  &\vdots& \ddots & \vdots  \\
    0   & 0 &0  &\cdots& \mathcal{E}_{u_{(\mathcal{B}, \mathcal{C})}}
  \end{array}\right),
\end{eqnarray*}
where $L_{\B, \C}$ is an invertible matrix over $\k$ and $\E_1, \E_2, \cdots, \E_{u_{(\B, \C)}}\in \mathcal{M}$.

Denote
\begin{eqnarray*}
^{\B}\mathcal{P}_{\Y}:=\{\Y_{ i}\mid 1\leq i\leq u_{(\B, \C)}\},
\end{eqnarray*}
\begin{eqnarray*}
 \mathcal{P}_{\Y}:=\bigcup\limits_{\B\in \mathcal{M}}{}^{\B}\mathcal{P}_{\Y}.
\end{eqnarray*}

As a consequence, we obtain the following:
\begin{lemma}\emph{(}\cite[Theorem 3.10]{YLL24}\emph{)}\label{coro:BXcomplete}
Let $H$ be a Hopf algebra over $\k$ with the dual Chevalley property and $C, D\in \mathcal{S}$ with basic multiplicative matrices $\C, \D\in\mathcal{M}$ respectively. Then the set
$$\{\mathcal{X}\in \bigcup\limits_{\mathcal{Y}\in{{}^1\mathcal{P}}}\mathcal{P}_{\mathcal{Y}} \mid  \mathcal{X} \text{ is a non-trivial }(\mathcal{C}, \mathcal{D})\text{-primitive matrix}\}$$
is a complete family of non-trivial $(\mathcal{C}, \mathcal{D})$-primitive matrices. Moreover, we have $$H_1/H_0=\bigoplus_{\mathcal{X}\in\bigcup\limits_{\mathcal{Y}\in{{}^1\mathcal{P}}}\mathcal{P}_{\mathcal{Y}}}\operatorname{span}(\overline{\mathcal{X}}).$$
\end{lemma}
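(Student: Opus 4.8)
The plan is to derive the result from three ingredients: the decomposition of $H_1/H_0$ into its simple--simple blocks, the existence theory for primitive matrices recalled above, and --- the real point --- the fact that every non-trivial primitive matrix is a left translate, by a basic multiplicative matrix, of one of the chosen $(1,\C)$-primitive matrices gathered in ${}^1\mathcal{P}$.

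First I would set up the reduction. Combining \cite[Corollary 2.11 and Lemma 2.17]{YLL24} with the general coalgebra identity $H_1/H_0=\bigoplus_{C,D\in\mathcal{S}}(C\wedge D)/(C+D)$ that underlies the very definition of the link quiver (cf.\ \cite{CHZ06}), one sees that the whole statement is equivalent to the assertion that, for each fixed $\C,\D\in\mathcal{M}$, the set ${}^{\C}\mathcal{P}^{\D}$ is a complete family of non-trivial $(\C,\D)$-primitive matrices; indeed, given that, $H_1/H_0=\bigoplus_{C,D}(C\wedge D)/(C+D)=\bigoplus_{C,D}\bigoplus_{\X\in{}^{\C}\mathcal{P}^{\D}}\span(\overline{\X})=\bigoplus_{\X\in\mathcal{P}}\span(\overline{\X})$. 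I will also use the auxiliary fact, from the same references, that every single non-trivial $(\C,\D)$-primitive matrix $\X$ satisfies $\span(\overline{\X})\subseteq(C\wedge D)/(C+D)$.

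Next I would check that the construction manufactures genuine primitive matrices and keeps the bookkeeping honest. For $\Y\in{}^1\mathcal{P}$ and $\B\in\mathcal{M}$ the matrix $\left(\begin{smallmatrix}1&\Y\\0&\C\end{smallmatrix}\right)$ is multiplicative, and since $H$ has the dual Chevalley property one has $H_0H_1\subseteq H_1$, so the entries of $\B\odot'\left(\begin{smallmatrix}1&\Y\\0&\C\end{smallmatrix}\right)$ lie in $H_1$; this product is again multiplicative, so after conjugating it into the block form displayed before the lemma each $\Y_i$ sits in a $2\times2$ multiplicative sub-block $\left(\begin{smallmatrix}\B&\Y_i\\0&\E_i\end{smallmatrix}\right)$, and comparing comultiplications entrywise shows $\Y_i$ is $(\B,\E_i)$-primitive. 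The antipode provides the inverse translation: reading $\sum b_{(1)}S(b_{(2)})=\varepsilon(b)1$ matricially and using the involution property of the based ring $\mathbb{Z}\mathcal{S}$ (\cite[Theorem 3.3]{Lar71}), one gets that $B\cdot S(B)$ and $S(B)\cdot B$ each contain exactly one copy of $1$, so multiplying the above back on the left by (a basic multiplicative matrix of) $S(B)$ recovers $\span(\overline{\Y})$ from the spans of the $\Y_i$; in particular $\Y$ non-trivial forces some $\Y_i$ non-trivial, whence each relevant $E_i\in{}^1\mathcal{S}$. The linear independence of all these spans, across the various pairs $(\B,\Y)$, I would deduce from the based-ring isomorphism $\operatorname{Gr}(H_0\text{-comod})\cong\mathbb{Z}\mathcal{S}$ of Lemma \ref{lem:Grring}, which records exactly how simple blocks split under $\odot'$, together with the Hopf-bimodule freeness mentioned below.

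The main obstacle is completeness: for fixed $\C,\D\in\mathcal{M}$ and an arbitrary non-trivial $(\C,\D)$-primitive matrix $\X$, I must place $\span(\overline{\X})$ inside the sum of the spans of the $(\C,\D)$-primitive members of ${}^{\C}\mathcal{P}^{\D}$. The idea is to translate $\X$ back to $1$. Form $S(\C)\odot'\left(\begin{smallmatrix}\C&\X\\0&\D\end{smallmatrix}\right)$; its degree-$0$ part spans $(S(C)\cdot C)+(S(C)\cdot D)$, which contains $1$, and since $S(C)\cdot C$ contains exactly one copy of $1$, conjugating into block form produces exactly one $1\times1$ block $(1)$, whose off-diagonal entries joining it to the blocks $\E'$ occurring in $S(C)\cdot D$ are non-trivial $(1,\E')$-primitive matrices --- non-trivial because re-multiplying on the left by $\C$ (using $\C\cdot S(\C)\ni 1$) recovers $\X$, so $S(\C)\X\subseteq H_0$ is impossible --- hence each such $E'\in{}^1\mathcal{S}$. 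By the reduction step the span of each such matrix lies in $(1\wedge E')/(1+E')=\bigoplus_{\gamma}\span(\overline{\X_{E'}^{(\gamma_{E'})}})$ with the fixed family $\{\X_{E'}^{(\gamma_{E'})}\}\subseteq{}^1\mathcal{P}$; and applying the construction to these family members with $\B=\C$ produces, among the resulting primitive matrices, precisely the $(\C,\D)$-primitive members of ${}^{\C}\mathcal{P}^{\D}$ --- note $\D$ occurs in $C\cdot E'\subseteq C\cdot S(C)\cdot D$, which contains $D$ --- so tracing $\X$ through the two mutually inverse translations yields the desired inclusion. Conceptually this step is the statement that $\gr_1 H=H_1/H_0$ is the $H_0$-Hopf bimodule induced from its coinvariant part $R$, i.e.\ $H_1/H_0\cong H_0\otimes R$ with $R$ the data recorded by ${}^1\mathcal{P}$, and the matrix manipulations are the explicit form of this isomorphism; I expect the genuinely delicate point to be making ``translate back, then translate forward'' precise --- matching up the similarity classes of the intermediate basic multiplicative matrices and establishing the directness of all the spans at once.
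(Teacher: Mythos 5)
This lemma is stated in the paper as a citation of \cite[Theorem 3.10]{YLL24} and is given no proof here, so there is no in-paper argument to compare against. Your proposal reconstructs what is evidently the intended proof — the one underlying the constructions the paper does recall: decompose $H_1/H_0$ into its $(C,D)$-isotypic bicomodule blocks, identify the left coinvariants with $\bigoplus_{\Y\in{}^1\mathcal{P}}\span(\overline{\Y})$, and use the fundamental theorem of Hopf modules (the freeness $H_1/H_0\cong H_0\otimes{}^{coH_0}(H_1/H_0)$, quoted in the paper in the $\odot$-variant in subsection 4.3) to translate between the vertex $\k1$ and an arbitrary pair $(\C,\D)$ via $\odot'$ and the antipode. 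The outline is correct and the gaps you flag yourself (the simultaneous directness of all the spans, and matching similarity classes under the back-and-forth translation) are exactly the points the freeness isomorphism is there to settle; the only slip worth noting is that when $C=D$ the matrix $S(\C)\odot'\left(\begin{smallmatrix}\C&\X\\0&\D\end{smallmatrix}\right)$ produces two $1\times1$ blocks $(1)$ rather than ``exactly one,'' but the argument only needs the copy of $1$ sitting inside $S(\C)\odot'\C$, so nothing breaks.
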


Let $C, D\in\mathcal{S}$ with basic multiplicative matrices $\C, \D\in\mathcal{M}$ respectively.
Recall that $C$, $D$ are said to be \textit{directly linked} in $H$ if $C+D$ is a proper subspace of $C\wedge D+D\wedge C$.
Note that by \cite[Lemma 3.6 (2)]{Li22} that $C$, $D$ are directly linked in $H$ if and only if there exists some $(\C, \D)$-primitive or $(\D, \C)$-primitive matrix, which is non-trivial.
At the end of this subsection, we recall the concept of link-indecomposable components of coalgebra.
\begin{definition}\emph{(}\cite[Definition 1.1]{Mon95}\emph{)}
A subcoalgebra $H^\prime$ of a coalgebra $H$ is called \textit{link-indecomposable} if the link quiver $\mathcal{Q}(H^\prime)$ of $H^\prime$ is connected (as an undirected graph).
A \textit{link-indecomposable component} of $H$ is a maximal link-indecomposable subcoalgebra. In particular, for a Hopf algebra $H$, we denote the link-indecomposable component containing $\k 1$ by $H_{(1)}$.
\end{definition}
According to \cite[Theorem 3.2]{Mon95}, the link-indecomposable component $H_{(1)}$ containing $\k1$ must be a normal Hopf subalgebra for any pointed Hopf algebra $H$. By \cite[Proposition 3.16]{Li22}, if $H$ is a Hopf algebra with the dual Chevalley property, $H_{(1)}$ remains a Hopf subalgebra of $H$. However, concerning a non-pointed Hopf algebra $H$ with the dual Chevalley property, $H_{(1)}$ may not necessarily be normal (see for example \cite[Example 6.1]{YLL24}). This demonstrates that Hopf algebras with the dual Chevalley property constitute a nontrivial generalization of pointed Hopf algebras.

\section{discrete corepresentation type}\label{section2}
Recall that the dimension vector of a finite dimensional right comodule $M$ over coalgebra $H$ is defined as $\underline{dim}(M)\in\Bbb{N}^{(I)}$ given by letting $\underline{dim}(M)_i$ equal the multiplicity of simple right comodule $V_i$ in a Jordan-H$\ddot{\textrm{o}}$lder series of $M$. In \cite[Definition 1.1]{ISSZ24}, the author introduced the definition of discrete corepresentation type for pointed coalgebra. We formally extend this concept.
\begin{definition}
A coalgebra $H$ is said to be of discrete corepresentation type, if for any finite dimension vector $\underline{d}$, there are only finitely many non-isomorphic indecomposable right $H$-comodules of dimension vector $\underline{d}$.
\end{definition}
It is clear that cosemisimple coalgebras are of discrete corepresentation type. Moreover, if a coalgebra $H$ is of finite corepresentation type, that is, there are finitely many non-isomorphic indecomposable right $H$-comodules, then $H$ is of discrete corepresentation type. Next we will focus on the general cases.

Let $H$ be a coalgebra and $\mathrm{Q}(H)=(\mathcal{S}, \mathcal{P})$ the link quiver of $H$. Suppose $\mathrm{Q}^\prime=(\mathcal{S}^\prime, \mathcal{P}^\prime)$ is a sub-quiver of
$\mathrm{Q}(H)$, where $\mathcal{S}^\prime\subseteq \mathcal{S}, \mathcal{P}^\prime\subseteq \mathcal{P}$. Define $\operatorname{Coalg}(\mathrm{Q}^\prime)=(\bigoplus_{\C\in\mathcal{S}^\prime}C)\oplus(\bigoplus_{\X\in\mathcal{P}^\prime}\span(\X))$, where $\span(\X)$ is the subspace of $H$ spanned by the entries of $\X$. It is straightforward to show that $\operatorname{Coalg}(\mathrm{Q}^\prime)$ is a subcoalgebra of $H$. This leads to the following Lemma.
\begin{lemma}\label{lem:subinfinite}
Let $H$ be a coalgebra over $\k$ and $\mathrm{Q}(H)=(\mathcal{S}, \mathcal{P})$ the link quiver of $H$. Suppose there is a finite sub-quiver $\mathrm{Q}^\prime=(\mathcal{S}^\prime, \mathcal{P}^\prime)$ of $\mathrm{Q}(H)$ satisfying the subcoalgebra $Coalg(\mathrm{Q}^\prime)$ is of infinite corepresentation type. Then $H$ is not of discrete corepresentation type.
\end{lemma}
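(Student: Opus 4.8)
The plan is to exhibit, for the subcoalgebra $D:=\operatorname{Coalg}(\mathrm{Q}^\prime)$, a dimension vector $\underline{d}$ admitting infinitely many non-isomorphic indecomposable right $D$-comodules, and then transport these comodules along the coalgebra inclusion $D\hookrightarrow H$ to conclude the same for $H$. The first half is immediate from the hypothesis: by assumption $D$ is of infinite corepresentation type, so the set of isomorphism classes of finite-dimensional indecomposable right $D$-comodules is infinite; since $\mathcal{S}^\prime$ and $\mathcal{P}^\prime$ are finite, $D$ is a finite-dimensional coalgebra, and hence the possible dimension vectors of finite-dimensional $D$-comodules lie in the set $\mathbb{N}^{(\mathcal{V}^\prime)}$ where $\mathcal{V}^\prime$ is the (finite) set of isomorphism classes of simple right $D$-comodules. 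Each indecomposable $D$-comodule is finite-dimensional, so it has a well-defined dimension vector; but a priori the dimensions could grow without bound. This is where a pigeonhole-type argument is needed, and it is the one genuinely nontrivial point.

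To handle that, I would invoke the standard dichotomy for finite-dimensional coalgebras (equivalently, for the finite-dimensional dual algebra $D^\ast$, via the duality between finite-dimensional right $D$-comodules and finite-dimensional left $D^\ast$-modules): a finite-dimensional algebra is either of finite representation type or admits, for some fixed dimension vector, infinitely many pairwise non-isomorphic indecomposables — indeed this is precisely the first Brauer--Thrall conjecture, proved by Ro\u{\i}ter, together with the observation that "bounded representation type" implies "finite representation type". Concretely: if for every $\underline{d}$ there were only finitely many indecomposable $D$-comodules of dimension vector $\underline{d}$, and if additionally the dimensions of indecomposables were bounded, then $D$ would be of finite corepresentation type, contradicting the hypothesis; and by Brauer--Thrall~I, unbounded dimension of indecomposables already forces the existence of a single $\underline{d}$ with infinitely many indecomposables of that dimension vector. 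So in either case $D$ fails to be of discrete corepresentation type: there is a fixed $\underline{d}^{\,\prime}$ with infinitely many non-isomorphic indecomposable right $D$-comodules $\{M_\alpha\}_{\alpha}$.

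It remains to push this up to $H$. Since $D$ is a subcoalgebra of $H$, every right $D$-comodule $M$ becomes a right $H$-comodule by composing $\rho_M\colon M\to M\otimes D$ with $M\otimes D\hookrightarrow M\otimes H$; this restriction-of-scalars functor along $D\hookrightarrow H$ is fully faithful and reflects isomorphisms (its essential image is exactly the $H$-comodules $N$ with $\operatorname{cf}(N)\subseteq D$, on which it is inverse to $N\mapsto N$ regarded with its $D$-coaction). In particular it sends indecomposables to indecomposables and non-isomorphic comodules to non-isomorphic comodules, and it preserves composition factors: a simple right $D$-comodule $V_i$ has $\operatorname{cf}(V_i)\subseteq D$, so it is also simple as an $H$-comodule, and a Jordan--H\"older series of $M$ over $D$ is one over $H$. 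Hence the dimension vector of $M_\alpha$ viewed over $H$ is the image of $\underline{d}^{\,\prime}$ under the inclusion $\mathbb{N}^{(\mathcal{V}^\prime)}\hookrightarrow\mathbb{N}^{(\mathcal{V})}$ — a single fixed dimension vector $\underline{d}$ for $H$ — and the $\{M_\alpha\}$ give infinitely many pairwise non-isomorphic indecomposable right $H$-comodules of that dimension vector. Therefore $H$ is not of discrete corepresentation type, as claimed. The main obstacle is the invocation of Brauer--Thrall~I to convert "infinite type" into "infinitely many indecomposables of one fixed dimension vector"; everything else is a routine verification that restriction along a subcoalgebra inclusion is well-behaved on indecomposables and on dimension vectors.
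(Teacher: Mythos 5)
Your overall strategy coincides with the paper's: pass to the finite-dimensional subcoalgebra $D=\operatorname{Coalg}(\mathrm{Q}^\prime)$, identify its finite-dimensional right comodules with finite-dimensional left $D^{*}$-modules, produce a single dimension vector carrying infinitely many non-isomorphic indecomposables, and transport these along the inclusion $D\hookrightarrow H$. Your verification that restriction along a subcoalgebra inclusion is fully faithful, preserves indecomposability and composition factors, and hence fixes the dimension vector, is correct and matches what the paper uses.

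However, there is a genuine gap at the one step you yourself flag as the crux. The first Brauer--Thrall conjecture (Ro\u{\i}ter) says only that bounded representation type implies finite representation type; its contrapositive gives that an algebra of infinite type has indecomposables of arbitrarily large dimension. This does \emph{not} force the existence of a single dimension vector $\underline{d}$ with infinitely many indecomposables of that dimension vector: it is perfectly consistent with Brauer--Thrall~I that an algebra have infinitely many indecomposables, of unbounded dimension, yet only finitely many in each dimension --- which is exactly the ``discrete'' behaviour you are trying to exclude. The conversion from ``infinite type'' to ``infinitely many indecomposables in one fixed dimension'' is the content of the \emph{second} Brauer--Thrall conjecture (strongly unbounded representation type), proved for finite-dimensional algebras over an algebraically closed field by Bautista; this is precisely the result the paper invokes as \cite[Theorem 2.4]{Bau85}. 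With that substitution your argument goes through; as written, the key reduction is unsupported.
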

\begin{proof}
Since $\mathrm{Q}(H)$ is the link quiver of $H$, it follows that $\operatorname{Coalg}(\mathrm{Q}^\prime)\subseteq \operatorname{Coalg}(\mathrm{Q(H)})\subseteq H$ are extensions of subcoalgebras. This means that there is an inclusion from the category of finite-dimensional right $\operatorname{Coalg}(\mathrm{Q}^\prime)$-comodules to the category of finite-dimensional right $H$-comodules. Note that $\textrm{Coalg}(\mathrm{Q}^\prime)$ is a finite-dimensional coalgebra of infinite corepresentation type and the category of finite-dimensional right comodules over $\textrm{Coalg}(\mathrm{Q}^\prime)$ is isomorphic to the category of finite-dimensional left modules over $(\textrm{Coalg}(\mathrm{Q}^\prime))^*$. It follows that $(\textrm{Coalg}(\mathrm{Q}^\prime))^*$ is a finite-dimensional algebra of infinite representation type. By \cite[Theorem 2.4]{Bau85}, there is an infinitely family of isomorphism classes of indecomposable right comodules over $\operatorname{Coalg}(\mathrm{Q}^\prime)$ with a dimension vector $\underline{d}$. Therefore the category of finite-dimensional right $H$-comodules contains infinitely many isomorphism classes of indecomposable comodules with a dimension vector $\underline{d}$, hence not of discrete corepresentation type.
\end{proof}
Let $A$ (resp. $C$) be an algebra (resp. coalgebra) over $\k$ and $\{M_i\}_{ i\in I}$ be the complete set of isoclasses of simple left $A$-modules (resp. right $C$-comodules). The \textit{Ext quiver} $\Gamma(A)$ (resp. $\Gamma(C)$) of $A$ (resp. $C$) is an oriented graph with vertices indexed by $I$, and there are $\dim_{\k}\Ext^1(M_i, M_j)$ arrows from $i$ to $j$ for any $i, j\in I$.

Let us recall the definition of separated quiver.
\begin{definition}\emph{(}cf. \cite[\textsection X. 2]{ARS95}\emph{)}
Let $\mathrm{Q}=(\mathrm{Q}_0, \mathrm{Q}_1)$ be a quiver, where $\mathrm{Q}_0=\{1, 2, \cdots, n\}$. The separated quiver $\mathrm{Q}_{s}$ of $\mathrm{Q}$ has $2n$ vertices $\{1, 2, \cdots, n, 1^\prime, 2^\prime, \cdots, n^\prime\}$ and an arrow $i \rightarrow j^\prime$ for every arrow $i \rightarrow j$ of $\mathrm{Q}$.
\end{definition}
Now we can formulate a necessary criterion for discreteness.
\begin{lemma}\label{lem:Dynkin}
Let $H$ be a non-cosemisimple coalgebra over $\k$ of discrete corepresentation type and $\mathrm{Q}(H)=(\mathcal{S}, \mathcal{P})$ the link quiver of $H$. Then for any finite sub-quiver $\mathrm{Q}^\prime$ of $\mathrm{Q}(H)$, the underlying graph of separated
quiver $\mathrm{Q}^\prime_s$ is a finite disjoint union of Dynkin diagrams.
\end{lemma}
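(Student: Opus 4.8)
The plan is to reduce the statement to a known characterization of finite-representation-type algebras via separated quivers. First I would observe that a finite sub-quiver $\mathrm{Q}^\prime$ of $\mathrm{Q}(H)$ gives rise to the finite-dimensional coalgebra $\operatorname{Coalg}(\mathrm{Q}^\prime)$, and by Lemma~\ref{lem:subinfinite} this coalgebra must be of \emph{finite} corepresentation type (otherwise $H$ would fail to be of discrete corepresentation type). Dualizing, $A := (\operatorname{Coalg}(\mathrm{Q}^\prime))^*$ is a finite-dimensional algebra of finite representation type. The next step is to identify $A$ as (Morita equivalent to) a radical-square-zero algebra: because $\operatorname{Coalg}(\mathrm{Q}^\prime)$ is built from simple subcoalgebras together with spaces of primitive matrices connecting them, its coradical filtration has length at most $2$, i.e. $\operatorname{Coalg}(\mathrm{Q}^\prime)_1 = \operatorname{Coalg}(\mathrm{Q}^\prime)$, which dualizes to $\operatorname{rad}(A)^2 = 0$.

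For a finite-dimensional algebra $A$ with $\operatorname{rad}(A)^2 = 0$, the classical theorem (see \cite[\textsection X.2]{ARS95}) states that $A$ is of finite representation type if and only if the separated quiver of the Ext quiver $\Gamma(A)$ is a finite disjoint union of Dynkin diagrams (of types $\mathbb{A}$, $\mathbb{D}$, $\mathbb{E}$). So I would invoke this theorem for $A$, obtaining that the separated quiver of $\Gamma(A)$ has underlying graph a disjoint union of Dynkin diagrams. The remaining task is to match $\Gamma(A)$ with $\mathrm{Q}^\prime$: the vertices of $\Gamma(A)$ correspond to the isoclasses of simple right $\operatorname{Coalg}(\mathrm{Q}^\prime)$-comodules, while the vertices of $\mathrm{Q}^\prime$ are the simple subcoalgebras in $\mathcal{S}^\prime$, and an $r^2$-dimensional simple subcoalgebra $C$ carries an $r$-dimensional simple comodule. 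Since $\operatorname{Ext}^1$ between simple comodules is computed, as for any coalgebra, by the number of arrows in the link quiver weighted suitably, one sees that $\Gamma(A)$ and $\mathrm{Q}^\prime$ have the same underlying valued/ordinary graph up to the standard matrix-size normalization; in particular they have the same separated quiver up to isomorphism of underlying graphs.

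I expect the main obstacle to be the bookkeeping in this last matching step: the link quiver is defined with the normalization factor $\tfrac{1}{rs}\dim_{\k}((C\wedge D)/(C+D))$, whereas the Ext quiver counts $\dim_{\k}\operatorname{Ext}^1$ between simple comodules, and one must check these agree (this is essentially a non-pointed version of the classical fact that $\operatorname{Ext}^1$ between simples is read off the quiver, and it is implicit in the primitive-matrix description recalled before Lemma~\ref{coro:BXcomplete}). One subtlety: the cited theorem on radical-square-zero algebras is usually stated over algebraically closed fields and for the \emph{ordinary} (non-valued) Ext quiver; since $\k$ is algebraically closed here, every simple subcoalgebra has dimension a perfect square and the comodule categories are "split", so no valued-quiver complications arise, and the Dynkin classification applies verbatim. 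Once the identification $\mathrm{Q}^\prime_s \cong \Gamma(A)_s$ of underlying graphs is in hand, the conclusion is immediate.
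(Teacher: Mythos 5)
Your proposal is correct and follows the same overall strategy as the paper: reduce via Lemma \ref{lem:subinfinite} to the finite corepresentation type of $\operatorname{Coalg}(\mathrm{Q}^\prime)$, identify the link quiver $\mathrm{Q}^\prime$ with the Ext quiver of the dual algebra (the paper cites \cite[Theorem 2.1 and Corollary 4.4]{CHZ06} for precisely the matching step you flag as the main obstacle), and then invoke the separated-quiver criterion for radical-square-zero artin algebras from \cite[\textsection X.2, Theorem 2.6]{ARS95}. The one genuine difference is how the radical-square-zero reduction is achieved. You observe that $\operatorname{Coalg}(\mathrm{Q}^\prime)\subseteq H_1$ (entries of primitive matrices lie in $C\wedge D\subseteq H_1$), so that, since the coradical filtration of a subcoalgebra is induced from that of $H$, one gets $(\operatorname{Coalg}(\mathrm{Q}^\prime))_1=\operatorname{Coalg}(\mathrm{Q}^\prime)$ and hence $\operatorname{rad}(A)^2=0$ for $A=(\operatorname{Coalg}(\mathrm{Q}^\prime))^*$; the criterion then applies to $A$ directly. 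The paper instead presents the basic algebra as $\k\mathrm{Q}^\prime/I$ with $J^t\subseteq I\subseteq J^2$ via Gabriel's theorem, passes to the radical-square-zero quotient $\k\mathrm{Q}^\prime/J^2$, and unwinds the ARS argument through the stably equivalent hereditary triangular matrix algebra before concluding by contradiction. Your shortcut is legitimate and slightly cleaner (it even shows the paper's epimorphism $B\rightarrow\k\mathrm{Q}^\prime/J^2$ is an isomorphism in this situation), while the paper's route has the minor advantage of not relying on $\operatorname{Coalg}(\mathrm{Q}^\prime)$ lying inside $H_1$. The remaining point you leave as bookkeeping --- that the normalized arrow count $\frac{1}{rs}\dim_{\k}((C\wedge D)/(C+D))$ equals $\dim_{\k}\Ext^1$ between the corresponding simple comodules --- is exactly the content of the dual Gabriel theorem of \cite{CHZ06}, so citing it closes that gap.
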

\begin{proof}
In fact, the category of finite-dimensional right comodules over $Coalg(\mathrm{Q}^\prime)$ is isomorphic to the category of finite-dimensional left modules over $(Coalg(\mathrm{Q}^\prime))^*$. This means that the coalgebra's version of Ext quiver $\Gamma^{\mathrm{c}}$ of $Coalg(\mathrm{Q}^\prime)$ is the same as the algebra's version of Ext quiver $\Gamma^{\mathrm{a}}$ of $(Coalg(\mathrm{Q}^\prime))^*$. According to \cite[Theorem 2.1 and Corollary 4.4]{CHZ06}, the link quiver $\mathrm{Q}^\prime$ of $Coalg(\mathrm{Q}^\prime)$ coincides with the algebra's version of Ext quiver $\Gamma^{\mathrm{a}}$ of $(Coalg(\mathrm{Q}^\prime))^*$. Note that $(Coalg(\mathrm{Q}^\prime))^*$ is Morita equivalent to a basic algebra $B$.
Let $J$ be the ideal generated by all the arrows in $\mathrm{Q}^\prime$. By the Gabriel's theorem, there exists an admissible ideal $I$ such that $$\k\mathrm{Q}^\prime/I\cong B,$$ where $J^t\subseteq I \subseteq J^2$ for some integer $t\geq2$. Thus there exists an algebra epimorphism $$f:B\rightarrow \k\mathrm{Q}^\prime/J^2.$$  Since the Jacobson radical of $\k\mathrm{Q}^\prime/J^2$ is $J/J^2$, we know that $\k\mathrm{Q}^\prime/J^2$ is an artinian algebra with radical square zero.
It follows from the proof of \cite[X.2 Theorem 2.6]{ARS95} that the separated quiver of $\k\mathrm{Q}^\prime/J^2$ coincides with the quiver of the hereditary algebra
$\sum=\left(\begin{array}{cc}
    (\k\mathrm{Q}^\prime/J^2)/(J/J^2)   & 0 \\
    J/J^2  & (\k\mathrm{Q}^\prime/J^2)/(J/J^2)
  \end{array}\right).$
Note that $\k\mathrm{Q}^\prime/J^2$ and $\sum$ are stably equivalent, it follows that $\k\mathrm{Q}^\prime/J^2$ is of infinite representation type if and only if $\sum$ is of infinite representation type. Suppose the underlying graph of $\mathrm{Q}^\prime_s$ is not a finite disjoint union of Dynkin diagrams, then $\sum$ is of infinite representation type, which indicates that $\k\mathrm{Q}^\prime/J^2$ is of infinite representation type. Thus $B$ is of infinite representation type.
It follows that $Coalg(\mathrm{Q}^\prime)$ is of infinite corepresentation type.
According to Lemma \ref{lem:subinfinite}, we know that $H$ is not of discrete corepresentation type, which is a contradiction.
\end{proof}
Recall that a quiver $\mathrm{Q}$ is said to be Schurian, if for each pair $(C, D)$ of vertices of $\mathrm{Q}$, there are at most one arrow from $C$ to $D$. The following result is not hard:
\begin{corollary}\label{coro:schurian}
Let $H$ be a coalgebra over $\k$. If $H$ is of discrete corepresentation, then the link quiver $\mathrm{Q}(H)$ of $H$ is Schurian.
\end{corollary}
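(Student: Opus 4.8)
The plan is to derive this as a direct consequence of Lemma \ref{lem:Dynkin} by contradiction. First I would dispose of a trivial case: if $\mathrm{Q}(H)$ has no arrows at all it is vacuously Schurian, and if it does have an arrow from some $C$ to some $D$, then $(C\wedge D)/(C+D)\neq 0$, and since $C\wedge D\subseteq H_0\wedge H_0=H_1$ we get $H_1\neq H_0$, so $H$ is non-cosemisimple and Lemma \ref{lem:Dynkin} applies. So assume $\mathrm{Q}(H)=(\mathcal{S},\mathcal{P})$ is \emph{not} Schurian: there is an ordered pair $(C,D)$ of vertices with at least two distinct arrows from $C$ to $D$.

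Next I would exhibit a small finite sub-quiver witnessing the failure. \textbf{Case $C\neq D$:} take $\mathrm{Q}'=(\mathcal{S}',\mathcal{P}')$ with $\mathcal{S}'=\{C,D\}$ and $\mathcal{P}'$ consisting of two distinct arrows $C\to D$. Its separated quiver $\mathrm{Q}'_s$ has the four vertices $C,D,C',D'$ and two arrows $C\to D'$, so the underlying graph of $\mathrm{Q}'_s$ has a connected component on the two vertices $C,D'$ joined by a double edge, namely the extended Dynkin diagram $\widetilde{A}_1$, which is not a Dynkin diagram. \textbf{Case $C=D$:} take $\mathrm{Q}'$ with the single vertex $C$ and two loops at $C$; then $\mathrm{Q}'_s$ has vertices $C,C'$ and two arrows $C\to C'$, so its underlying graph is again $\widetilde{A}_1$. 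In either case $\mathrm{Q}'$ is a finite sub-quiver of $\mathrm{Q}(H)$ whose separated quiver has underlying graph that is not a finite disjoint union of Dynkin diagrams, contradicting Lemma \ref{lem:Dynkin}. Hence no such pair $(C,D)$ exists and $\mathrm{Q}(H)$ is Schurian.

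The argument is essentially a one-line reduction, so there is no real technical obstacle; the only points requiring a little care are (i) noticing that a single double edge already produces the \emph{affine} diagram $\widetilde{A}_1$ rather than a finite Dynkin diagram, and (ii) remembering to treat the loop case $C=D$ separately (a single loop is harmless, producing only $A_2$ after separation, but two loops again give $\widetilde{A}_1$), together with the preliminary remark that the existence of an arrow forces $H$ to be non-cosemisimple so that Lemma \ref{lem:Dynkin} is actually available.
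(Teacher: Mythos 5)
Your proposal is correct and follows essentially the same route as the paper: the paper's proof likewise observes that a failure of the Schurian property produces a finite sub-quiver whose separated quiver contains a Kronecker quiver (underlying graph $\widetilde{A}_1$), contradicting Lemma \ref{lem:Dynkin}. Your extra care in checking that an arrow forces $H$ to be non-cosemisimple (so the lemma applies) and in treating the loop case $C=D$ separately only makes explicit details the paper leaves implicit.
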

\begin{proof}
Otherwise, there exists some finite sub-quiver $\mathrm{Q}^\prime$ of $\mathrm{Q}(H)$ such that $\mathrm{Q}^\prime_s$ contains a Kronecker quiver as a sub-quiver. This is contrary to Lemma \ref{lem:Dynkin}.
\end{proof}

\section{Hopf algebras with the dual Chevalley property of discrete corepresentation type}\label{section3}
In this section, we classify Hopf algebras with the dual Chevalley property of discrete corepresentation type.

Let $H$ be a non-cosemisimple Hopf algebra over $\k$ with the dual Chevalley property and $\mathrm{Q}(H)$ the link quiver of $H$. Denote the coradical filtration of $H$ by $\{H_n\}_{n\geq0}$.
For convenience, denote $\mathcal{S}=\{C_i\mid i\in I\}$ the set of all the simple subcoalgebras of $H$. For any $C_i, C_j\in\mathcal{S}$, let $C_i\cdot C_j=\sum\limits_{t\in I}\alpha_{ij}^tC_t$ in $\Bbb{Z}\mathcal{S}$, where $\alpha_{ij}^t\in\Bbb{Z}_+$.
Moreover, we denote $\mathcal{M}=\{\C_j\mid i\in I\}$, such that each $\C_j\in\mathcal{M}$ is the basic multiplicative matrix of $C_j\in\mathcal{S}$.

Denote ${}^1\mathcal{S}=\{C\in\mathcal{S}\mid \k1+C\neq\k1\wedge C\}$, $\mathcal{S}^1=\{C\in\mathcal{S}\mid C+\k1\neq C\wedge \k1\}$. Observe that for any $C\in{}^1\mathcal{S}$, there exists some arrow from $C$ to $\k 1$ in the link quiver $\mathrm{Q}(H)$ of $H$. For any $C\in\mathcal{S}{}^1$, there exists some arrow from $\k 1$ to $C$ in the link quiver $\mathrm{Q}(H)$ of $H$.

The authors of \cite{YLL24} establish certain properties for the link quiver of a finite-dimensional Hopf algebra $H$ with the dual Chevalley property in Sections 4 and 5. As it turns out, several of these properties admit natural extensions to the infinite-dimensional setting. We now list some of these results from \cite{YLL24}, the proofs of which are omitted.

\begin{lemma}\label{lemma:P^1=1^P}\emph{(}\cite[Lemmas 4.6, 4.7 and 5.4, Propositions 4.9, 4.14, Corollary 4.10]{YLL24}\emph{)}
Let $H$ be a non-cosemisimple Hopf algebra over $\k$ with the dual Chevalley property.
\begin{itemize}
  \item[(1)]
  \begin{itemize}
  \item[(i)]We have $\mid{^1\mathcal{P}}\mid=\mid\mathcal{P}^1\mid\geq1$. Moreover, $C\in{}^1\mathcal{S}$ if and only if $S(C)\in\mathcal{S}^1$;
     \item[(ii)] $\mid{}^{\C}\mathcal{P}\mid=\mid\mathcal{P}^{\C}\mid=1 $ holds for all $\C\in\mathcal{M}$ if and only if
  $\mid{}^1\mathcal{P}\mid=1$ and the unique subcoalgebra $C\in{}^1\mathcal{S}$ is $1$-dimensional.
     \end{itemize}
  \item[(2)]For any $\Y\in{}^{1}\mathcal{P}$, where $\Y$ is a non-trivial $(1, \C_j)$-primitive matrix and $\C_j\in\mathcal{M}$, then the cardinal number $\mid{}^{\C_i}\mathcal{P}_{\Y}\mid=\sum\limits_{t\in I}\alpha_{ij}^t\geq1$.
  \item[(3)]If all the simple subcoalgebras directly linked to $\k1$ are $1$-dimensional, then we have
  \begin{itemize}
 \item[(i)]$\mid{^{\C}\mathcal{P}}\mid=\mid{\mathcal{P}^{\C}}\mid=\mid{^1\mathcal{P}}\mid$, for any $\C\in \mathcal{M}$;
 \item[(ii)]$H_{(1)}$ is a pointed Hopf algebra.
\end{itemize}
\item[(4)]
 \begin{itemize}
 \item[(i)]$\alpha_{ik}^t=\alpha_{tk^*}^i$ holds for any $i, j, k\in I$;
     \item[(ii)] If $\mid{}^1\mathcal{P}\mid=1$ and $C_k$ is the unique simple subcoalgebra contained in ${}^1\mathcal{S}$. Then the number of arrows with end vertex $C_i$ in $\mathrm{Q}(H)$ is equal to $\sum\limits_{t\in I}\alpha_{ik}^t$, and the number of arrows with start vertex $C_i$ in $\mathrm{Q}(H)$ is equal to $\sum\limits_{t\in I}\alpha_{ik^*}^t$. In particular, the number of arrows from $C_t$ to $C_i$ in $\mathrm{Q}(H)$ is equal to $\alpha_{ik}^t$.
     \end{itemize}
  \end{itemize}
\end{lemma}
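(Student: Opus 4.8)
The plan is to derive all five assertions from the block-decomposition machinery for primitive matrices recalled before Lemma~\ref{coro:BXcomplete}, together with the based-ring structure of $\Bbb{Z}\mathcal{S}$ from Lemma~\ref{Prop:basedring}. Two auxiliary tools do most of the work. First, the \emph{dimension function}: set $d(C_t)=(\dim_{\k}C_t)^{1/2}$, the size of a basic multiplicative matrix $\C_t$; comparing sizes in the block-diagonalization~(\ref{equationCD}) of the $d(C_i)d(C_j)\times d(C_i)d(C_j)$ matrix $\C_i\odot^\prime\C_j$ gives $\sum_t\alpha_{i,j}^t\,d(C_t)=d(C_i)d(C_j)$, so in particular $\sum_t\alpha_{i,j}^t\geq1$ always. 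Second, the based-ring axioms of Lemma~\ref{Prop:basedring}: writing $\tau$ for the augmentation and $C_t\mapsto C_{t^*}=S(C_t)$ for the anti-involution, the coefficient of $C_t$ in $C_iC_j$ is $\tau(C_iC_jC_{t^*})$, and $\tau(w)=\tau(w^*)$, whence $\alpha_{i,j}^t=\tau(C_iC_jC_{t^*})=\tau(C_tC_{j^*}C_{i^*})=\alpha_{t,j^*}^i$; moreover $\tau(C_jC_{j^*})=1=\tau(C_{j^*}C_j)$ says the coefficient of $\k1$ in each of $C_j\cdot S(C_j)$ and $S(C_j)\cdot C_j$ equals $1$. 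Finally, the antipode fixes $H_0$ (dual Chevalley property) and carries a non-trivial $(\C,\D)$-primitive matrix to (the transpose of) a non-trivial $(\D^\prime,\C^\prime)$-primitive matrix with $\C^\prime,\D^\prime$ basic multiplicative matrices of $S(C),S(D)$; hence $\X\mapsto S(\X)$ is an arrow-reversing bijection of the arrow set $\mathcal{P}$ sending an arrow $D\to C$ to an arrow $S(C)\to S(D)$, and it fixes $\k1$.

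\textbf{Parts (1) and (2).} Part~(2) is read off the displayed construction before Lemma~\ref{coro:BXcomplete}: for $\B=\C_i$ and $\Y$ a $(1,\C_j)$-primitive matrix, the block of $\B\odot^\prime\left(\begin{smallmatrix}1&\Y\\0&\C_j\end{smallmatrix}\right)$ coming from the $\C_j$-entry is $\C_i\odot^\prime\C_j$, whose block-diagonalization has exactly $\sum_t\alpha_{i,j}^t$ blocks $\E_1,\dots,\E_{u_{(\B,\C_j)}}$, so $|{}^{\C_i}\mathcal{P}_{\Y}|=u_{(\B,\C_j)}=\sum_t\alpha_{i,j}^t\geq1$. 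For~(1)(i): as $H$ is non-cosemisimple, $H_0\subsetneq H_1$, so $\mathcal{P}\neq\emptyset$ by Lemma~\ref{coro:BXcomplete}; since $\mathcal{P}=\bigcup_{\Y\in{}^1\mathcal{P}}\mathcal{P}_{\Y}$, this forces $|{}^1\mathcal{P}|\geq1$, and restricting the $S$-bijection to arrows incident to $\k1$ gives $|{}^1\mathcal{P}|=|\mathcal{P}^1|$ and $C\in{}^1\mathcal{S}$ if and only if $S(C)\in\mathcal{S}^1$. For~(1)(ii): if $|{}^{\C}\mathcal{P}|=|\mathcal{P}^{\C}|=1$ for all $\C\in\mathcal{M}$, then taking $\C=1$ gives $|{}^1\mathcal{P}|=1$; let ${}^1\mathcal{S}=\{C_j\}$ with unique $(1,\C_j)$-primitive matrix $\Y$. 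If $r:=d(C_j)\geq2$, then $\sum_t\alpha_{j^*,j}^t\,d(C_t)=r^2\geq4$ while the coefficient of $\k1$ in $S(C_j)\cdot C_j$ is $1$, so some $C_t\neq\k1$ has $\alpha_{j^*,j}^t\geq1$, whence $|{}^{\C_{j^*}}\mathcal{P}|=|{}^{\C_{j^*}}\mathcal{P}_{\Y}|=\sum_t\alpha_{j^*,j}^t\geq2$ by~(2), a contradiction; so $d(C_j)=1$. Conversely, if $|{}^1\mathcal{P}|=1$ and the unique $C\in{}^1\mathcal{S}$ is one-dimensional, then so is the unique member of $\mathcal{S}^1$ by~(1)(i), hence all simple subcoalgebras directly linked to $\k1$ are one-dimensional, and~(1)(ii) follows from~(3)(i) below.

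\textbf{Parts (3) and (4).} Assume all simple subcoalgebras directly linked to $\k1$ are one-dimensional. For one-dimensional $C_j=\k g$ the matrix $\C_i\odot^\prime\C_j$ is already a basic multiplicative matrix, so $\sum_t\alpha_{i,j}^t=1$; hence by~(2) each $\Y\in{}^1\mathcal{P}$ contributes exactly one arrow to ${}^{\C_i}\mathcal{P}$, giving $|{}^{\C_i}\mathcal{P}|=|{}^1\mathcal{P}|$, and $|\mathcal{P}^{\C_i}|=|{}^{\C_{i^*}}\mathcal{P}|=|{}^1\mathcal{P}|$ by the $S$-symmetry; this is~(3)(i). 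Part~(3)(ii) is \cite[Proposition~4.14]{YLL24}: as $\mathrm{Q}(H_{(1)})$ is connected and right multiplication by a group-like preserves both the wedge product (hence direct linking) and $d$, an induction along a path from $\k1$ shows that every simple subcoalgebra $C$ of $H_{(1)}$ has $d(C)=1$ — if $C$ is directly linked to $\k g\subseteq H_{(1)}$ then $Cg^{-1}$ is directly linked to $\k1$, hence one-dimensional, hence $d(C)=1$ — so $H_{(1)}$ is pointed. For~(4), assume $|{}^1\mathcal{P}|=1$; then ${}^1\mathcal{P}=\{\Y\}$ with $\Y$ a $(1,\C_k)$-primitive matrix, and by~(1)(i) $\mathcal{P}^1=\{\Z\}$ with $\Z$ a $(\C_{k^*},1)$-primitive matrix. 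Since ${}^{\C_i}\mathcal{P}={}^{\C_i}\mathcal{P}_{\Y}$, part~(2) gives that the number of arrows with end vertex $C_i$ is $\sum_t\alpha_{i,k}^t$, and among these exactly $\alpha_{i,k}^t$ have start vertex $C_t$ (the multiplicity of $\C_t$ among the blocks of $\C_i\odot^\prime\C_k$); the identity $\alpha_{i,k}^t=\alpha_{t,k^*}^i$ was recorded above. Dually, running the same construction with $\left(\begin{smallmatrix}\C_{k^*}&\Z\\0&1\end{smallmatrix}\right)$ in place of $\left(\begin{smallmatrix}1&\Y\\0&\C_k\end{smallmatrix}\right)$, the block coming from the $\C_{k^*}$-entry of $\C_i\odot^\prime\left(\begin{smallmatrix}\C_{k^*}&\Z\\0&1\end{smallmatrix}\right)$ is $\C_i\odot^\prime\C_{k^*}$, and one reads off that the number of arrows with start vertex $C_i$ is $|\mathcal{P}^{\C_i}|=|\mathcal{P}^{\C_i}_{\Z}|=\sum_t\alpha_{i,k^*}^t$.

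\textbf{The main obstacle.} The genuinely technical input is the construction recalled before Lemma~\ref{coro:BXcomplete}: that iterated $\odot^\prime$-products of the fixed $(1,\C)$-primitive (resp. $(\C,1)$-primitive) matrices produce a \emph{complete} family of non-trivial primitive matrices for every pair of simple subcoalgebras, compatibly with the product of $\Bbb{Z}\mathcal{S}$, plus the index bookkeeping making the $S$-duality precise. This is the content of \cite[Sections~3--5]{YLL24} and is largely recalled in the excerpt; granting it, everything above is routine combinatorics in the based ring, driven by the multiplicativity of $d$ and the identity $\alpha_{i,j}^t=\alpha_{t,j^*}^i$.
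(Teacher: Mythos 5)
The paper does not prove this lemma at all: it is imported verbatim from \cite{YLL24} (Lemmas 4.6, 4.7, 5.4, Propositions 4.9, 4.14, Corollary 4.10 there), and the authors state explicitly that they ``list some results, which we will not give any proof.'' So there is no in-paper argument to compare yours against; what can be checked is whether your reconstruction is consistent with the machinery the paper does recall, and it is. Your two driving identities are sound: the size count $\sum_t\alpha_{i,j}^t\,d(C_t)=d(C_i)d(C_j)$ follows from comparing matrix sizes in (\ref{equationCD}), and $\alpha_{i,j}^t=\tau(C_iC_jC_{t^*})=\tau(C_tC_{j^*}C_{i^*})=\alpha_{t,j^*}^i$ is a correct use of the based-ring axioms of Lemma \ref{Prop:basedring} (note $1^*=1$ since $*$ is a ring anti-homomorphism, so $\tau\circ{}^*=\tau$). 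Parts (2), (1)(i), (1)(ii), (3) and (4)(ii) then go through as you write them, and your pointedness argument for (3)(ii) via the coalgebra automorphism $x\mapsto xg^{-1}$ is the standard one. The one place where you lean on something not literally recalled in the paper is the second half of (4)(i), where you invoke a ``dual'' construction built from $\mathcal{P}^1$ to count arrows out of $C_i$; that dual statement is indeed in \cite{YLL24}, but you could avoid it entirely: by (4)(ii) the number of arrows from $C_i$ to $C_t$ is $\alpha_{t,k}^i$, so the number of arrows with start vertex $C_i$ is $\sum_t\alpha_{t,k}^i=\sum_t\alpha_{i,k^*}^t$ by your own involution identity. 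You are also right to flag that the completeness and disjointness of the families ${}^{\B}\mathcal{P}_{\Y}$ (Lemma \ref{coro:BXcomplete}, i.e.\ \cite[Theorem 3.10]{YLL24}) is the genuine technical input; everything else is bookkeeping in $\Bbb{Z}\mathcal{S}$, exactly as the cited source does it.
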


Recall that a \textit{basic cycle} of length $n$ is a quiver with $n$ vertices $e_0, e_1, \cdots, e_{n-1}$ and $n$ arrows $a_0, a_1, \cdots a_{n-1}$, where the arrow $a_i$ goes from the vertex $e_i$ to the vertex $e_{i+1}$. A finite-dimensional algebra is said to be \textit{Nakayama}, if each indecomposable projective left and right module has a unique composition series. It is well-known that a finite-dimensional basic algebra $A$ is Nakayama if and only if every vertex of the Ext quiver of $A$ is the start vertex of at most one arrow and the end vertex of at most one arrow. With the help of the preceding lemma, we can now prove:
\begin{theorem}\label{Thm:H10finite}
Let $H$ be a non-cosemisimple Hopf algebra over $\k$ with the dual Chevalley property and $H_{(1)}$ be its link-indecomposable component containing $\k1$. If the coradical of $H_{(1)}$ is finite-dimensional, then the following statements are equivalent:
\begin{itemize}
  \item[(1)]$H$ is of discrete corepresentation type;
  \item[(2)]Every vertex in $\mathrm{Q}(H)$ is both the start vertex of only one arrow and the end vertex of only one arrow, that is, $\mathrm{Q}(H)$ is a disjoint union of basic cycles;
  \item[(3)]There is only one arrow $C\rightarrow \k1$ in $\mathrm{Q}(H)$ whose end vertex is $\k1$ and $\dim_{\k}(C)=1$;
  \item[(4)]There is only one arrow $\k1\rightarrow D$ in $\mathrm{Q}(H)$ whose start vertex is $\k1$ and $\dim_{\k}(D)=1$.
\end{itemize}
\end{theorem}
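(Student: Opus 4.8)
The plan is to prove $(3)\Leftrightarrow(4)$, then $(3)\Rightarrow(2)$ and $(2)\Rightarrow(3)$, then $(3)\Rightarrow(1)$, and finally $(1)\Rightarrow(3)$, the last implication carrying essentially all of the content. Throughout I will use that the arrows of $\mathrm{Q}(H)$ with end vertex $\k 1$ are exactly the elements of ${}^1\mathcal{P}$ and those with start vertex $\k 1$ the elements of $\mathcal{P}^1$, that $\Z\mathcal{S}\cong\operatorname{Gr}((H_{(1)})_0\text{-comod})$ (Lemma~\ref{lem:Grring}) so a simple subcoalgebra $C$ has a well-defined positive Frobenius--Perron dimension $\operatorname{FPdim}(C)=\sqrt{\dim_\k C}$, multiplicative on $\Z\mathcal{S}$, and I write $\mathcal{S}_{(1)}$ for the set of simple subcoalgebras contained in $H_{(1)}$, which is finite by hypothesis; put $N=\mid\mathcal{S}_{(1)}\mid$. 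The equivalence $(3)\Leftrightarrow(4)$ is immediate from Lemma~\ref{lemma:P^1=1^P}(1)(i), since the antipode induces a dimension-preserving bijection ${}^1\mathcal{S}\to\mathcal{S}^1$ with $\mid{}^1\mathcal{P}\mid=\mid\mathcal{P}^1\mid$. For $(3)\Rightarrow(2)$, statement $(3)$ says precisely that $\mid{}^1\mathcal{P}\mid=1$ and the unique $C\in{}^1\mathcal{S}$ is $1$-dimensional, so Lemma~\ref{lemma:P^1=1^P}(1)(ii) gives $\mid{}^{\C}\mathcal{P}\mid=\mid\mathcal{P}^{\C}\mid=1$ for every $C\in\mathcal{M}$, i.e. every vertex of $\mathrm{Q}(H)$ is the end of exactly one arrow and the start of exactly one arrow; as $(H_{(1)})_0$ is finite-dimensional each connected component then has finitely many vertices, hence is a basic cycle. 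For $(2)\Rightarrow(3)$: by $(2)$ the vertex $\k 1$ has in-degree $1$, so $\mid{}^1\mathcal{P}\mid=1$; let $C_k$ be the unique member of ${}^1\mathcal{S}$ with corresponding simple comodule $V_k$. By Lemma~\ref{lemma:P^1=1^P}(4)(i) the out-degree of $C_k$ equals $\sum_{t\in I}\alpha_{k,k^*}^t$, which by $(2)$ is $1$, so $C_k\cdot S(C_k)$ is a single basis element with coefficient $1$; but the trivial comodule is a direct summand of $V_k\otimes V_k^*$ via coevaluation, so $\k 1$ occurs in $C_k\cdot S(C_k)$, forcing $C_k\cdot S(C_k)=\k 1$, and comparing Frobenius--Perron dimensions gives $\dim_\k C_k=\operatorname{FPdim}(C_k)\operatorname{FPdim}(S(C_k))=\operatorname{FPdim}(\k 1)=1$, which is $(3)$.

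For $(3)\Rightarrow(1)$: by $(3)\Rightarrow(2)$ every vertex of $\mathrm{Q}(H)$, and hence of the link quiver of any subcoalgebra, is the end of at most one arrow and the start of at most one arrow. Given a finite-dimensional indecomposable right $H$-comodule $M$, it is a comodule over the finite-dimensional subcoalgebra $\operatorname{cf}(M)$, whose link quiver is a subquiver of $\mathrm{Q}(H)$ and, by \cite[Theorem 2.1 and Corollary 4.4]{CHZ06}, coincides with the Ext quiver of $(\operatorname{cf}(M))^*$; by the Nakayama criterion recalled before the theorem, applied to the basic algebra Morita equivalent to $(\operatorname{cf}(M))^*$, the algebra $(\operatorname{cf}(M))^*$ is Nakayama, so $M$ is uniserial and therefore determined up to isomorphism by its top and its length. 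Since a fixed dimension vector involves only finitely many simple comodules, there are only finitely many indecomposable comodules with that dimension vector, i.e. $H$ is of discrete corepresentation type.

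Finally $(1)\Rightarrow(3)$, the main point. Assume $H$ is of discrete corepresentation type. By Corollary~\ref{coro:schurian}, $\mathrm{Q}(H)$ is Schurian, so $\mathrm{Q}(H_{(1)})$ is a finite subquiver of $\mathrm{Q}(H)$; by Lemma~\ref{lem:Dynkin} the separated quiver $\mathrm{Q}(H_{(1)})_s$, a graph on $2N$ vertices, is a disjoint union of Dynkin diagrams, in particular a forest, so it has at most $2N-1$ edges. \emph{Step 1: $\mid{}^1\mathcal{P}\mid=1$.} Put $m=\mid{}^1\mathcal{P}\mid\ge 1$. For each $\Y\in{}^1\mathcal{P}$, say a $(1,\C_j)$-primitive matrix, and each $C_i\in\mathcal{S}_{(1)}$, Lemma~\ref{lemma:P^1=1^P}(2) gives $\mid{}^{\C_i}\mathcal{P}_{\Y}\mid=\sum_{t\in I}\alpha_{i,j}^t\ge 1$, and the sources of these arrows lie among the simple subcoalgebras of $C_i\cdot C_j\subseteq H_{(1)}$, hence in $\mathcal{S}_{(1)}$; so every vertex of $\mathrm{Q}(H_{(1)})$ has in-degree $\ge m$, whence $\mathrm{Q}(H_{(1)})$, and so $\mathrm{Q}(H_{(1)})_s$, has at least $mN$ arrows/edges. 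Therefore $mN\le 2N-1<2N$, forcing $m=1$. \emph{Step 2: the unique $C_k\in{}^1\mathcal{S}$ is $1$-dimensional.} Suppose $\dim_\k C_k=r^2$ with $r\ge 2$. By Step 1 and its antipodal counterpart, $\k 1$ has in-degree and out-degree $1$, the unique arrows being $C_k\to\k 1$ and $\k 1\to S(C_k)$; moreover $C_k\cdot S(C_k)$ and $S(C_k)\cdot C_k$ both contain $\k 1$ (via coevaluation and evaluation) while having Frobenius--Perron dimension $r^2\ge 4$, hence each is a sum of at least two basis elements, so by Lemma~\ref{lemma:P^1=1^P}(4)(i) the vertex $C_k$ has out-degree $\ge 2$ and $S(C_k)$ has in-degree $\ge 2$ in $\mathrm{Q}(H_{(1)})$. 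Combining this branching with Lemma~\ref{lemma:P^1=1^P}(4), the fact that the trivial comodule occurs with multiplicity exactly one inside $V_k\otimes V_k^*$ and similar tensor products, and the finiteness and connectedness of $\mathrm{Q}(H_{(1)})$, one exhibits a finite subquiver of $\mathrm{Q}(H_{(1)})$ whose separated quiver contains an extended Dynkin diagram (a vertex of degree $\ge 4$, a cycle, or a $\widetilde{D}_n$ or $\widetilde{E}$ configuration), contradicting Lemma~\ref{lem:Dynkin}. Hence $\dim_\k C_k=1$, which is $(3)$.

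The hard part is precisely Step~2 of $(1)\Rightarrow(3)$: any single ``local'' subquiver around $\k 1$, $C_k$ and $S(C_k)$ already has a Dynkin separated quiver, so the contradiction must be obtained by tracking how the branching forced at $C_k$ and $S(C_k)$ propagates through the finite connected quiver $\mathrm{Q}(H_{(1)})$ — this is exactly where the finite-dimensionality of $(H_{(1)})_0$ is essential, and the step that requires a genuine case analysis.
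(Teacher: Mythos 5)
Most of your skeleton is sound, and several pieces are correct and even cleaner than the paper's citations: $(3)\Leftrightarrow(4)$ and $(3)\Rightarrow(2)$ follow from Lemma~\ref{lemma:P^1=1^P}(1) exactly as you say; your $(2)\Rightarrow(3)$ via $\tau(C_k\cdot S(C_k))=1$ and Frobenius--Perron dimensions is fine; your $(3)\Rightarrow(1)$ via uniseriality over the Nakayama dual of $\operatorname{cf}(M)$ (top and length determine $M$) is a legitimate variant of the paper's argument, which instead invokes \cite[Lemma 2.6]{Iov18b} to make $\operatorname{cf}(\underline{d})$ finite-dimensional; and Step~1 of $(1)\Rightarrow(3)$, counting at least $mN$ arrows against the at most $2N-1$ edges of a forest on $2N$ vertices, is a nice self-contained way to get $\mid{}^1\mathcal{P}\mid=1$.

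However, Step~2 of $(1)\Rightarrow(3)$ --- ruling out $\dim_\k C_k=r^2$ with $r\ge 2$ --- is not a proof but an announcement. Knowing only that $C_k$ has out-degree $\ge 2$ and $S(C_k)$ has in-degree $\ge 2$ does not by itself produce a non-Dynkin separated quiver: a quiver can perfectly well have one vertex of out-degree $2$, one of in-degree $2$, all others of degree $1$, and a separated quiver that is a disjoint union of $A_2$'s and $A_3$'s (your own edge count only improves $mN\le 2N-1$ to $N+1\le 2N-c$, which is no contradiction). The actual obstruction comes from iterating the fusion rules: writing $S(C_k)\cdot C_k=\k1+\cdots$ and repeatedly multiplying by $C_k$ and $S(C_k)$, one must show that either some product has too many constituents (giving a vertex of separated degree $\ge 3$ on both sides, hence an Euclidean diagram) or one obtains a strictly increasing chain of simple subcoalgebras $D_1, D_2,\dots$, which is incompatible with $(H_{(1)})_0$ being finite-dimensional (or, in the finite case, forces a repetition $D_i=D_j$ that creates extra arrows violating Dynkin-ness). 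This is precisely the case analysis the paper outsources to \cite[Lemma 4.12 and Theorem 5.6]{YLL24}, and its analogue for infinite-dimensional coradical occupies all of Lemma~\ref{lem:C>4} with its cases (I)--(III). Your sentence ``one exhibits a finite subquiver \dots containing an extended Dynkin diagram'' asserts the conclusion of that analysis without carrying it out, and you acknowledge as much in your closing remark; as written, the key implication $(1)\Rightarrow(3)$ is therefore incomplete.
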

\begin{proof}
According to Lemma \ref{lemma:P^1=1^P} (1), we know the equivalence of (2), (3), and (4). It remains to show the equivalence of (1) and (2).

 Suppose $H$ is of discrete corepresentation type. Because of the fact that there is an inclusion from the category of finite-dimensional right $H_{(1)}$-comodules to the category of
finite-dimensional right $H$-comodules, we know that $H_{(1)}$ is of discrete corepresentation type. Clearly, if $(H_{(1)})_0$ is finite-dimensional, the number of simple subcoalgbras of $H_{(1)}$ is finite and \cite[Lemma 4.12]{YLL24} works. Then by Lemma \ref{lem:Dynkin} and the same reason in the proof of \cite[Theorem 5.6]{YLL24}, we know that $\mathrm{Q}(H_{(1)})$ is a basic cycle. According to Lemma \ref{lemma:P^1=1^P} (1), $\mathrm{Q}(H)$ is a disjoint union of basic cycles.

 Conversely, from the proof of Lemma \ref{lem:Dynkin}, for any finite-dimensional subcoalgebra $H^\prime$ of $H$, we know that the link quiver $\mathrm{Q}(H^\prime)$ of $H^\prime$ is the same as the Ext quiver $\Gamma(H^{\prime*})^\mathrm{a}$ of $H^{\prime*}$. Observe that $H^{\prime*}$ is Morita equivalent to a basic algebra $B(H^{\prime*})$.
Since every vertex in $\Gamma(H^{\prime*})^\mathrm{a}$ is the start vertex of at most one arrow and the end vertex of at most one arrow, it follows that the basic algebra $B(H^{\prime*})$ is a Nakayama algebra. By \cite[\textsection VI. Theorem 2.1]{ARS95}, $B(H^{\prime*})$ is of finite representation type, which implies that $H^\prime$ is of finite corepresentation type. For any finite dimension vector $\underline{d}$, denote by $\operatorname{cf}(\underline{d})$ the smallest subcoalgebra of $H$ such that all the right $H$-comodules of dimension vector $\underline{d}$ have their coefficient coalgebra contained in $\operatorname{cf}(\underline{d})$, that is, $$\operatorname{cf}(\underline{d})=\sum\limits_{\underline{dim}(M)=\underline{d}}\operatorname{cf}(M).$$
Using \cite[Lemma 2.6]{Iov18b}, we can show that $\operatorname{cf}(\underline{d})$ is finite-dimensional. It follows that $\operatorname{cf}(\underline{d})$ is of finite corepresentation type. Therefore, the set of isomorphism classes of dimension vector $\underline{d}$ corepresentations is finite. This implies that $H$ is of discrete corepresentation type.
\end{proof}
As a consequence of \cite[Theorem 5.6]{YLL24} and Theorem \ref{Thm:H10finite}, we have
\begin{corollary}
A finite-dimensional Hopf algebra $H$ over $\k$ with the dual Chevalley property is of finite corepresentation type if and only if it is of discrete corepresentation type.
\end{corollary}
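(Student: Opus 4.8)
The plan is to deduce this equivalence by combining the link-quiver characterization of Theorem \ref{Thm:H10finite} with the classification of finite corepresentation type for finite-dimensional Hopf algebras with the dual Chevalley property obtained in \cite[Theorem 5.6]{YLL24}; essentially nothing new has to be proved.

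One direction requires no work at all: if $H$ is of finite corepresentation type, then there are only finitely many non-isomorphic indecomposable right $H$-comodules in total, hence in particular only finitely many of each prescribed dimension vector, so $H$ is of discrete corepresentation type. (This was already recorded in Section \ref{section2}.)

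For the converse I would argue as follows. Assume $H$ is finite-dimensional, has the dual Chevalley property, and is of discrete corepresentation type. Since $H$ is finite-dimensional, so is its coradical $H_0$, and therefore so is $(H_{(1)})_0$, because $H_{(1)}$ is a subcoalgebra of $H$ and hence $(H_{(1)})_0\subseteq H_{(1)}\subseteq H$; thus the finiteness hypothesis of Theorem \ref{Thm:H10finite} is met. Applying the equivalence $(1)\Leftrightarrow(2)$ of that theorem, discreteness forces $\mathrm{Q}(H)$ to be a disjoint union of basic cycles. Finally, by \cite[Theorem 5.6]{YLL24}, a finite-dimensional Hopf algebra with the dual Chevalley property whose link quiver is a disjoint union of basic cycles is of finite corepresentation type. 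Chaining these implications yields the claim.

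I do not expect any real obstacle here, since all the substantive content lies in Theorem \ref{Thm:H10finite} and in \cite[Theorem 5.6]{YLL24}. The only point to verify with some care is that finite-dimensionality of $H$ genuinely places us inside the hypothesis of Theorem \ref{Thm:H10finite}, namely that $(H_{(1)})_0$ is finite-dimensional; this is immediate from the inclusion $(H_{(1)})_0\subseteq H$.
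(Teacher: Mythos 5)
Your proposal is correct and matches the paper's intended argument exactly: the paper derives this corollary precisely by combining Theorem \ref{Thm:H10finite} (discreteness $\Leftrightarrow$ the link quiver is a disjoint union of basic cycles, applicable since $(H_{(1)})_0\subseteq H$ is finite-dimensional) with \cite[Theorem 5.6]{YLL24}, the trivial direction being immediate. The only cosmetic point is that Theorem \ref{Thm:H10finite} is stated for non-cosemisimple $H$, so one should note that the cosemisimple case is trivial on both sides; this does not affect the substance.
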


Let $q\in \k$ be an $n$-th root of unit of order $d$. In \cite{Rad75} and \cite{AS98}, Radford and Andruskiewitsch-Schneider have considered the following Hopf algebra $A(n, d, \mu, q)$ which as an associative algebra is generated by $g$ and $x$ with relations
$$
g^n=1, \;\;\;\;x^d=\mu(1-g^d),\;\;\;\;xg=qgx.
$$

Its comultiplication $\Delta$, counit $\varepsilon$, and the antipode $S$ are given by
$$
\Delta(g)=g\otimes g,\;\; \varepsilon(g)=1,\;\;
\Delta(x)=1\otimes x+x\otimes g,\;\; \varepsilon(x)=0,\;\;
S(g)=g^{-1},\;\;S(x)=-xg^{-1}.
$$

According to \cite[Theorem 4.6]{LL07}, we know that $A(n, d, \mu, q)$ is a finite-dimensional link-indecomposable pointed Hopf algebra of finite corepresentation type.
\begin{corollary}\label{coro:=kx}
Let $H$ be a non-cosemisimple Hopf algebra over $\k$ with the dual Chevalley property and $H_{(1)}$ be its link-indecomposable component containing $\k1$. If the coradical of $H_{(1)}$ is finite-dimensional, then $H$ is of discrete corepresentation type if and only if $H_{(1)}$ is isomorphic to $A(n, d, \mu, q)$ or $\k[x]$.
\end{corollary}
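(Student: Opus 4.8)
The plan is to transport the statement, through Theorem \ref{Thm:H10finite}, to the existing classification of \emph{pointed} Hopf algebras of discrete corepresentation type, and to use \cite[Theorem 4.6]{LL07} for the converse direction.

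For the forward implication, I would first argue that $H_{(1)}$ inherits discreteness: the inclusion of the category of finite-dimensional right $H_{(1)}$-comodules into that of finite-dimensional right $H$-comodules is exact and fully faithful, so $H_{(1)}$ is of discrete corepresentation type; it is a Hopf algebra with the dual Chevalley property, it is non-cosemisimple (by Lemma \ref{lemma:P^1=1^P}(1)(i) there is at least one arrow incident to $\k 1$, and such an arrow lies inside $H_{(1)}$), and $(H_{(1)})_0$ is finite-dimensional by hypothesis. Applying Theorem \ref{Thm:H10finite} to $H$, discreteness yields conditions (2) and (3): $\mathrm{Q}(H)$ is a disjoint union of basic cycles, and the unique arrow of $\mathrm{Q}(H)$ ending at $\k 1$ has a $1$-dimensional source, i.e. ${}^1\mathcal{S}=\{\k g\}$ for some $g\in G(H)$. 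Using Lemma \ref{lemma:P^1=1^P}(1)(i) (so that $\mathcal{S}^1=\{S(\k g)\}=\{\k g^{-1}\}$ is $1$-dimensional as well) together with Lemma \ref{lemma:P^1=1^P}(3)(ii), I would conclude that $H_{(1)}$ is a \emph{pointed} Hopf algebra, and since $\mathrm{Q}(H_{(1)})$ is the connected component of $\k 1$ inside $\mathrm{Q}(H)$, it is a single basic cycle. Thus $H_{(1)}$ is a link-indecomposable pointed Hopf algebra of discrete corepresentation type with finite-dimensional coradical and with link quiver a basic cycle, and by the classification of pointed Hopf algebras of discrete corepresentation type in \cite{Iov18a} and \cite{ISSZ24} --- which determines precisely the algebra structure of the link-indecomposable component containing $\k 1$ --- this forces $H_{(1)}\cong A(n,d,\mu,q)$ or $H_{(1)}\cong\k[x]$. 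Concretely, one writes $G(H_{(1)})=\langle g\rangle$, cyclic of order $n$ equal to the length of the cycle, takes $x$ to be a non-trivial $(1,g)$-skew-primitive spanning the arc of the cycle leaving $\k 1$, shows $H_{(1)}$ is generated by $g$ and $x$ with $xg=q\,gx$ for the scalar $q$ by which $g$ acts on $\overline{x}\in H_1/H_0$, notes $q^n=1$, and then --- via the $q$-binomial theorem in characteristic $0$ and the fact that $\mathrm{Q}(H_{(1)})$ has no arrows besides the cycle --- deduces that the only relation on $x$ is $x^d=\mu(1-g^d)$ with $d=\operatorname{ord}(q)$ and $\mu\in\k$; the case $n=1$ gives $x$ primitive and unconstrained, i.e. $\k[x]$.

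For the converse, if $H_{(1)}\cong A(n,d,\mu,q)$ then by \cite[Theorem 4.6]{LL07} it is of finite, hence discrete, corepresentation type, while if $H_{(1)}\cong\k[x]$ it is of discrete corepresentation type because it has exactly one indecomposable comodule of each dimension. In either case $\mathrm{Q}(H_{(1)})$ is a basic cycle (a single loop at $\k 1$ in the $\k[x]$ case), so its unique arrow ending at $\k 1$ has $1$-dimensional source $\k g$ with $g\in G(H_{(1)})$. Since every simple subcoalgebra of $H$ directly linked to $\k 1$ lies in $H_{(1)}$, and since wedge products of subcoalgebras of $H_{(1)}$ are computed inside $H_{(1)}$, this is also the unique arrow of $\mathrm{Q}(H)$ ending at $\k 1$ and it still has $1$-dimensional source; so condition (3) of Theorem \ref{Thm:H10finite} holds and $H$ is of discrete corepresentation type.

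The hard part is the classification step: it coincides with the finite-coradical case of \cite{Iov18a, ISSZ24}, and making the parenthetical outline fully rigorous requires (i) showing that a link-indecomposable pointed Hopf algebra with basic-cycle link quiver is generated in degree one --- equivalently, that its coradically graded Hopf algebra is the bosonization of a rank-one Nichols algebra over $\k\langle g\rangle$ --- and (ii) the characteristic-zero case analysis of which relations on the skew-primitive generator are compatible with the link quiver having no arrows beyond the cycle. Everything else is a bookkeeping reduction through Theorem \ref{Thm:H10finite} and Lemma \ref{lemma:P^1=1^P}.
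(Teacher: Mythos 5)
Your proposal is correct and follows essentially the same route as the paper: both directions are reduced to condition (3) of Theorem \ref{Thm:H10finite} (resp.\ to the basic-cycle conclusion plus Lemma \ref{lemma:P^1=1^P} to see that $H_{(1)}$ is pointed), and the final identification of $H_{(1)}$ with $A(n,d,\mu,q)$ or $\k[x]$ is delegated, exactly as in the paper, to the classification of pointed Hopf algebras of discrete corepresentation type in \cite{Iov18a, ISSZ24}. Your parenthetical sketch of how that classification is proved is extra detail the paper does not attempt, but it is not needed once the citation is in place.
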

\begin{proof}
If $H_{(1)}$ is isomorphic to $ A(n, d, \mu, q)$ or $\k[x]$, there is only one arrow $C\rightarrow \k1$ in $\mathrm{Q}(H)$ whose end vertex is $\k1$ and $\dim_{\k}(C)=1$. Using Theorem \ref{Thm:H10finite}, we know that $H$ is of discrete corepresentation type. Conversely, since $H$ is of discrete corepresentation type, it follows from Theorem \ref{Thm:H10finite} that the link quiver $\mathrm{Q}(H_{(1)})$ of $H_{(1)}$ is a basic cycle. According to Lemma \ref{lemma:P^1=1^P} (1), $H_{(1)}$ is a pointed Hopf algebra of discrete corepresentation type. It is a consequence of \cite[Section 6]{ISSZ24} that $H_{(1)}$ is isomorphic to $ A(n, d, \mu, q)$ or $\k[x]$.
\end{proof}

\begin{example}
Let $H$ be the Hopf algebra generated by $z, y, t, u$ satisfying the following relations:
$$z^{2}=1,\;\; y^{2}=1,\;\; t^{2}=1,\;\;  z y=y z,\;\;  t z=z t,\;\; t y=y t,$$
$$zu=uz, \;\;yu=uy,\;\;tu=ut.$$

The coalgebra structure and antipode are given by:
$$\Delta(z)=z \otimes z,\;\; \Delta(y)=y \otimes y,\;\; \varepsilon(z)=\varepsilon(y)=1,$$
$$\Delta(t)=\frac{1}{2}\left[(1+y) t \otimes t+(1-y) t \otimes z t\right],\;\;  \varepsilon(t)=1,$$
$$\Delta(u)=1\otimes u+u\otimes 1,\;\;\varepsilon(u)=0,$$
$$S(z)=z,\;\;  S(y)=y,\;\;  S(t)=\frac{1}{2}\left[(1+y) t+(1-y) z t\right],\;\; S(u)=-u.$$

Denote $E=\operatorname{span}\{t, zt, yt, zyt\}$, then $\mathcal{S}=\{\k 1, \k  z, \k  y, \k  zy,  E\}$. We give the corresponding multiplicative matrix $\E$ of $E$, where
$$
\E=\frac{1}{2}\left(\begin{array}{cc}
t+yt&t-yt\\
zt-zyt&zt+zyt
 \end{array}\right).
$$

By the definition of the comultiplication of $H$, we know that $(u)$ is a non-trivial $((1), (1))$-primitive matrix, that is, $u$ is a non-trivial primitive element. Moreover, we have ${}^1\mathcal{P}=\{(u)\}$. Let $\mathcal{M}$ be a set of representative elements of basic multiplicative matrices over $H$ for the similarity class. For any $\mathcal{B}\in\mathcal{M}$, we have
$$
\mathcal{B}\odot^\prime
\left(\begin{array}{cc}
1&u\\
0&1
 \end{array}\right)=
 \left(\begin{array}{cc}
\mathcal{B}&\mathcal{B}u\\
0&\mathcal{B}
 \end{array}\right).
$$
According to Lemma \ref{coro:BXcomplete}, we know that $\mathcal{P}=\{(u), (zu), (yu), (zyu), \mathcal{X}\}$, where $$
\mathcal{X}=\frac{1}{2}\left(\begin{array}{cc}
(t+yt)u&(t-yt)u\\
(zt-zyt)u&(zt+zyt)u
 \end{array}\right).
$$
It follows that $(zu)$ is a non-trivial $( (z),  (z))$-primitive matrix, $(yu)$ is a non-trivial $((y), (y))$-primitive matrix, $(zyu)$ is a non-trivial $((zy), (zy))$-primitive matrix and $\mathcal{X}$ is a non-trivial $(\mathcal{E}, \mathcal{E})$-primitive matrix.
Thus the link quiver of $H$ is shown below:
$$
\begin{tikzpicture}
\filldraw [black] (0,0) circle (0.5pt) node[anchor=west]{$\k  1$};
\filldraw [black] (2,0) circle (0.5pt)node[anchor=west]{$\k  z$};
\filldraw [black] (4,0) circle (0.5pt) node[anchor=west]{$\k y$};
\filldraw [black] (6,0) circle (0.5pt)node[anchor=west]{$\k  zy$};
\filldraw [black] (8,0) circle (0.5pt)node[anchor=west]{$E$};
\draw[thick, ->] (0,0.1) arc (0:340:0.6);
\draw[thick, ->] (2,0.1) arc (0:340:0.6);
\draw[thick, ->] (4,0.1) arc (0:340:0.6);
\draw[thick, ->] (6,0.1) arc (0:340:0.6);
\draw[thick, ->] (8,0.1) arc (0:340:0.6);
\end{tikzpicture}
$$
From Theorem \ref{Thm:H10finite}, $H$ is a non-pointed Hopf algebra with the dual Chevalley property of discrete corepresentation type.
\end{example}
Next we consider the case when $(H_{(1)})_0$ is infinite-dimensional. Before proceeding further, let us give the following lemma.
\begin{lemma}\label{lem:C>4}
Let $H$ be a non-cosemisimple Hopf algebra over $\k$ with the dual Chevalley property. If there exists some $C_k\in {}^1\mathcal{S}$ such that $\dim_{\k}(C_k)\geq 9$, then $H$ is not of discrete corepresentation type.
\end{lemma}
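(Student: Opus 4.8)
The plan is to derive a contradiction from the assumption that $H$ is of discrete corepresentation type by exhibiting a finite sub-quiver $\mathrm{Q}'$ of $\mathrm{Q}(H)$ whose separated quiver $\mathrm{Q}'_s$ is not a disjoint union of Dynkin diagrams, which contradicts Lemma \ref{lem:Dynkin}. The starting point is the hypothesis that some $C_k\in{}^1\mathcal{S}$ has $\dim_{\k}(C_k)=r^2\geq 9$, so $r\geq 3$. First I would pass to the link-indecomposable component $H_{(1)}$ containing $\k 1$: since there is an arrow $C_k\to\k 1$ (by definition of ${}^1\mathcal{S}$), the vertex $C_k$ lies in $H_{(1)}$, and discreteness of $H$ passes to $H_{(1)}$ via the inclusion of comodule categories. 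Then I would use Corollary \ref{coro:schurian} to know the link quiver is Schurian, and use Lemma \ref{lemma:P^1=1^P} to control the arrows at $\k 1$.

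The key computation is to count arrows in the link quiver incident to $\k 1$ and to $C_k$. The arrow $C_k\to\k 1$ corresponds to a non-trivial $(\C_k,1)$-primitive matrix, and dually $S(C_k)\to\k 1$ gives one as well; more importantly, by Lemma \ref{lemma:P^1=1^P}(2), for a non-trivial $(1,\C_k)$-primitive matrix $\Y\in{}^1\mathcal{P}$ the cardinality $\mid{}^{\C_i}\mathcal{P}_\Y\mid = \sum_{t\in I}\alpha_{i,k}^t$, where $C_i\cdot C_k=\sum_t \alpha_{i,k}^t C_t$ in $\Z\mathcal{S}$. Taking $C_i=\k 1$ gives $\mid{}^{1}\mathcal{P}_\Y\mid = 1$ (since $1\cdot C_k=C_k$), which is consistent with Schurian-ness; but taking $C_i = S(C_k)$, the product $S(C_k)\cdot C_k$ decomposes as $\k 1$ plus other simple subcoalgebras, and a dimension count $\dim_\k(S(C_k)\otimes C_k)=r^4$ forces $\sum_t\alpha_{S(C_k),k}^t\geq r^2/\,(\text{max simple dim})$; combined with $r\geq 3$ this produces a vertex of $\mathrm{Q}(H_{(1)})$ that is the end vertex of at least two arrows, i.e. a vertex of in-degree $\geq 2$. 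I would then build a finite sub-quiver $\mathrm{Q}'$ containing such a vertex $v$ together with two distinct arrows $a_1\colon u_1\to v$, $a_2\colon u_2\to v$ landing at it (the $u_i$ possibly equal); in the separated quiver, the vertex $v$ receives arrows from $u_1'$ and $u_2'$ — and by iterating the multiplication (using that $C_k$ also appears in further products, hence $v$ is itself the start of $\geq 2$ arrows, or by following the cycle back around through $\k 1$) one obtains at $v$ or at a neighbouring vertex the configuration $\widetilde{D}_4$ (a central vertex with four neighbours) or at least $\widetilde{A}_n$ with a branch, neither of which is Dynkin.

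To make the branching argument clean I would argue as follows: since $r\geq 3$, in $\Z\mathcal{S}$ the element $C_k\cdot S(C_k)$ is a sum of simple subcoalgebras of total "matrix-size" $r^2\geq 3$, so either it contains $\k 1$ with some other $1$-dimensional $\k g$ and a third summand, or it contains a simple subcoalgebra of dimension $\geq 4$ besides $\k 1$ — in every case $\mid{}^{\C_k}\mathcal{P}\mid\geq 3$ by Lemma \ref{lemma:P^1=1^P}(2) (with the roles adjusted), meaning $C_k$ is the end vertex of at least three arrows in $\mathrm{Q}(H_{(1)})$. Taking the finite sub-quiver consisting of $C_k$ and three distinct arrows into it, the separated quiver $\mathrm{Q}'_s$ contains the vertex $C_k$ with at least three distinct predecessors $C_{j_1}',C_{j_2}',C_{j_3}'$ — that is a star $\widetilde{D}$-type configuration (in fact it already contains the graph $D_4$ extended, since we may also throw in one outgoing arrow from $C_k$ coming from $C_k\cdot C_{k^*}\ni \k 1$), so the underlying graph of $\mathrm{Q}'_s$ is not a finite disjoint union of Dynkin diagrams. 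By Lemma \ref{lem:Dynkin} this contradicts discreteness of $H$. The main obstacle I anticipate is the bookkeeping that guarantees at least three \emph{distinct} arrows at $C_k$ rather than one arrow of multiplicity three (excluded by Corollary \ref{coro:schurian}, which is exactly why we need the Schurian reduction) and that the predecessor vertices are genuinely distinct; this requires a careful analysis of the decomposition of $C_k\cdot S(C_k)$ and $S(C_k)\cdot C_k$ in the based ring $\Z\mathcal{S}$, using $\tau(C\cdot S(C))=1$ and the fact that the structure constants are nonnegative integers, to rule out the degenerate possibility that all branching is absorbed into a single arrow.
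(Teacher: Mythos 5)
Your proposal has two genuine gaps, and they sit exactly where the real work of this lemma lies.

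First, the claim that the decomposition of $C_k\cdot S(C_k)$ (or $S(C_k)\cdot C_k$) always yields at least three simple summands is false. Writing $S(C_k)\cdot C_k=\sum_t\alpha^t C_t$ and $n_t=\sqrt{\dim_\k(C_t)}$, the constraint is $\sum_t\alpha^t n_t=r^2$ with one summand equal to $\k 1$; nothing prevents the remaining part from being a \emph{single} simple subcoalgebra $D_1$ with $n_{D_1}=r^2-1$, giving only two summands and hence only two arrows ending at the relevant vertex. Your fallback bound $\sum_t\alpha^t\geq r^2/(\text{max simple dim})$ is vacuous because there is no a priori bound on the dimensions of simple subcoalgebras. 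The paper must therefore treat the two- and three-summand cases separately (its cases (II) and (III)), and there the argument is an iteration: repeatedly multiplying by $C_k$ and $S(C_k)$ either eventually produces enough branching, or produces an infinite sequence of simple subcoalgebras of strictly increasing dimension, which is impossible. Your proposal contains no analogue of this iterative step, and it cannot be avoided.

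Second, even when you do get three arrows ending at a single vertex, this is not a contradiction: in the separated quiver a vertex with exactly three incoming arrows is the center of a star with three edges, i.e.\ the underlying graph $D_4$, which \emph{is} a Dynkin diagram. Your attempted repair --- ``throw in one outgoing arrow from $C_k$'' --- fails because the separated quiver disconnects in- and out-neighbourhoods: incoming arrows of $\mathrm{Q}'$ at $C_k$ become arrows into the primed vertex $C_k'$, while an outgoing arrow of $\mathrm{Q}'$ at $C_k$ becomes an arrow out of the unprimed vertex $C_k$, so the two configurations do not share a vertex and the star is not extended. This is why the paper's case (I) requires \emph{four} summands to conclude immediately (yielding $\widetilde{D}_4$), and why in the three-summand case it must combine a vertex of in-degree $3$ with a vertex of out-degree $3$ sharing edges to exhibit $\widetilde{D}_5$. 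Your overall strategy (find a finite sub-quiver violating Lemma \ref{lem:Dynkin}) is the right one and matches the paper's, but the combinatorial core --- ruling out the low-summand decompositions and producing a genuinely Euclidean, rather than Dynkin, configuration --- is missing.
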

\begin{proof}
According to Lemma \ref{lem:Dynkin}, the key idea of the proof is to find a finite sub-quiver $\mathrm{Q}^\prime$ of $\mathrm{Q}(H)$ such that the
underlying graph of separated quiver $\mathrm{Q}^\prime_s$ is not a disjoint union of Dynkin diagrams.
Using Lemma \ref{lemma:P^1=1^P} (4) (i), we know that the following two numbers are equal:
\begin{itemize}
\item[-]The number of $C_t$ contained in $C_i\cdot C_k$;
\item[-]The number of $C_i$ contained in $C_t\cdot S(C_k)$.
\end{itemize}
To prove this lemma, we divide the argument into several cases.
\begin{itemize}
  \item[(I)]Suppose $$S(C_k)\cdot C_k=\sum_{i\in I}\alpha_{k^*k}^i C_i$$ in $\Bbb{Z}\mathcal{S}$, where $\sum_{i\in I}\alpha_{k^*k}^i\geq 4$. Then by Lemma \ref{lemma:P^1=1^P} (2), the separated quiver of $\mathrm{Q}(H)$ contains a vertex which is the end vertex of at least $4$ arrows. Evidently, we can find a finite sub-quiver $\mathrm{Q}^\prime$ of $\mathrm{Q}(H)$ such that the
underlying graph of separated quiver $\mathrm{Q}^\prime_s$ is not a disjoint union of Dynkin diagrams.
  \item[(II)]Suppose $$S(C_k)\cdot C_k=\k1+D_1+D_2$$ in $\Bbb{Z}\mathcal{S}$,  where $\sqrt{\dim_{\k}(D_1)}\geq \sqrt{\dim_{\k}(D_2)}.$ Since $\sqrt{\dim_{\k}(C_k)}\geq 3$, it follows that $$\sqrt{\dim_{\k}(D_1)}+\sqrt{\dim_{\k}(D_2)}\geq 8$$ and $$\sqrt{\dim_{\k}(D_1)}>\sqrt{\dim_{\k}(S(C_k))}.$$
    From Lemma \ref{lemma:P^1=1^P} (4), we have
    $$D_1\cdot S(C_k)=S(C_k)+\sum_{i\in I}\beta_iC_i$$ in $\Bbb{Z}\mathcal{S}$, where $\beta_i\in\Bbb{Z}_+$ for any $i\in I$ and $\sum_{i\in I}\beta_i\geq 1$.
    In fact, if $\sum_{i\in I}\beta_i\geq 2$, by Lemma \ref{lemma:P^1=1^P} (2), there exists a finite sub-quiver $\mathrm{Q}^\prime$ of $\mathcal{Q}(H)$ such that $\mathrm{Q}^\prime_s$ contains at least one vertex which is the end vertex of 3 arrows and at least one vertex which is the start vertex of 3 arrows. That is, $\mathrm{Q}_s$ contains either
$$
\begin{tikzpicture}
\filldraw [black] (1,0) circle (0.5pt) node[anchor=south]{};
\filldraw [black] (-1,0) circle (0.5pt)node[anchor=north]{};
\draw[thick, ->] (-0.9,0.1).. controls (-0.1,0.1) and (0.1,0.1) .. node[anchor=south]{}(0.9,0.1) ;
\draw[thick, ->] (-0.9,-0.1).. controls (-0.1,-0.1) and (0.1,-0.1) .. node[anchor=south]{}(0.9,-0.1);
\end{tikzpicture},
$$
or
    $$
\begin{tikzpicture}
\filldraw [black] (0,0) circle (0.5pt) node[anchor=south]{$\k1$};
\filldraw [black] (1,0) circle (0.5pt)node[anchor=south]{$D_2$};
\filldraw [black] (2,0) circle (0.5pt)node[anchor=south]{$D_1$};
\filldraw [black] (1,-1) circle (0.5pt) node[anchor=north]{$S(C_k)^\prime$};
\filldraw [black] (3,-1) circle (0.5pt)node[anchor=north]{};
\filldraw [black] (4,-1) circle (0.5pt)node[anchor=north]{};
\draw[thick, ->] (0.1,-0.1).. controls (0.1,-0.1) and (0.1,-0.1) .. node[anchor=south]{}(0.9,-0.9) ;
\draw[thick, ->] (1,-0.1).. controls (1,-0.1) and (1,-0.1) .. node[anchor=south]{}(1,-0.9) ;
\draw[thick, ->] (1.9,-0.1).. controls (1.9,-0.1) and (1.9,-0.1) .. node[anchor=south]{}(1.1,-0.9) ;
\draw[thick, ->] (2.1,-0.1).. controls (2.1,-0.1) and (2.1,-0.1) .. node[anchor=south]{}(2.9,-0.9) ;
\draw[thick, ->] (2.2,-0.1).. controls (2.2,-0.1) and (2.2,-0.1) .. node[anchor=south]{}(3.9,-0.9) ;
\end{tikzpicture}
$$
as a sub-quiver.
The underlying graph of the sub-quiver in the latter case is $\tilde{D_5}$ and it is an Euclidean
graph.
As a result, $H$ is not of discrete corepresentation type. Now suppose $\sum_{i\in I}\beta_i=1$, which means that
     $$D_1\cdot S(C_k)=S(C_k) +E_1$$  in $\Bbb{Z}\mathcal{S}$. Clearly, we have
     \begin{eqnarray*}
     \sqrt{\dim_{\k}(E_1)}&=&\sqrt{\dim_{\k}(S(C_k))}\sqrt{\dim_{\k}(D_1)}-\sqrt{\dim_{\k}(S(C_k))}\\
     &>&\sqrt{\dim_{\k}(S(C_k))}\sqrt{\dim_{\k}(D_1)}-\sqrt{\dim_{\k}(D_1)}\\
     &>& \sqrt{\dim_{\k}(D_1)}.
     \end{eqnarray*}
     It follows from Lemma \ref{lemma:P^1=1^P} (4) that $$E_1\cdot C_k=D_1 +\sum_{i\in I}\gamma_iC_i$$ in $\Bbb{Z}\mathcal{S}$, where $\gamma_i\in\Bbb{Z}_+$ for any $i\in I$ and $\sum_{i\in I}\gamma_i\geq 1$. A similar argument shows that either there exists a finite sub-quiver $\mathrm{Q}^\prime$ of $\mathrm{Q}(H)$ such that $\mathrm{Q}^\prime_s$ contains at least one vertex which is the end vertex of 3 arrows and at least one vertex which is the start vertex of 3 arrows or $\sum_{i\in I}\gamma_i= 1$. In the later case, we have $$E_1\cdot C_k=D_1 +E_2$$ in $\Bbb{Z}\mathcal{S}$ and
     $\sqrt{\dim_{\k}(E_2)}>  \sqrt{\dim_{\k}(E_1)}$. Continue the steps, if $H$ is not of discrete corepresentation type, we can get an infinite sequence
     $$E_i\cdot S^{i+1}(C)=E_{i-1}+E_{i+1}$$
     such that $\sqrt{\dim_{\k}(E_{i+1})}> \sqrt{\dim_{\k}(E_i)}$, where $i\geq 1$ and $E_{0}=D_1$. One can finally get an infinite-dimensional simple subcoalgebra, which is impossible. Based on the above argument, we know that $H$ is not of discrete corepresentation type in this case.
      \item[(III)]Finally, we focus on the case that $$S(C_k)\cdot C_k=\k1+D_1$$ in $\Bbb{Z}\mathcal{S}$, where $\sqrt{\dim_{\k}(D_1)}> \sqrt{\dim_{\k}(S(C_k))}$.
      \begin{itemize}
      \item[(i)]If $$D_1\cdot S(C_k)=S(C_k)+\sum\limits_{i\in I}\beta_i C_i$$in $\Bbb{Z}\mathcal{S}$, where $\beta_i\in\Bbb{Z}_+$ for any $i\in I$ and $\sum\limits_{i\in I}\beta_i\geq 3$. Using the same argument as in (I), we can easily show that $H$ is not of discrete corepresentation type.
       \item[(ii)]If $$D_1\cdot S(C_k)=S(C_k)+D_2+D_3$$in $\Bbb{Z}\mathcal{S}$, where $\sqrt{\dim_{\k}(D_2)}\geq \sqrt{\dim_{\k}(D_3)}$. We have
        \begin{eqnarray*}
        \sqrt{\dim_{\k}(D_2)}&\geq&\frac{1}{2}(\sqrt{\dim_{\k}(D_1)}\sqrt{\dim_{\k}(S(C_k))}-\sqrt{\dim_{\k}(S(C_k))})\\
        &>&\frac{1}{2}(\sqrt{\dim_{\k}(D_1)}\sqrt{\dim_{\k}(S(C_k))}-\sqrt{\dim_{\k}(D_1)})\\
        &\geq&\sqrt{\dim_{\k}(D_1)}.
        \end{eqnarray*}
        It follows that $$D_2\cdot C_k=D_1 +\sum\limits_{i\in I}\gamma_i C_i$$in $\Bbb{Z}\mathcal{S}$, where $\gamma_i\in \Bbb{Z}_+$ and $\sum\limits_{i\in I}\gamma_i\geq 1.$ If $\sum\limits_{i\in I}\gamma_i\geq 2,$
        by Lemma \ref{lemma:P^1=1^P} (2), there exists a finite sub-quiver $\mathrm{Q}^\prime$ of $\mathcal{Q}(H)$ such that $\mathrm{Q}^\prime_s$ contains at least one vertex which is the end vertex of 3 arrows and at least one vertex which is the start vertex of 3 arrows. This means that $H$ is not of discrete corepresentation type.  If $\sum\limits_{i\in I}\gamma_i=1,$ that is, $$D_2\cdot C_k=D_1 +D_4$$ in $\Bbb{Z}\mathcal{S}$, where $\sqrt{\dim_{\k}(D_4)}> \sqrt{\dim_{\k}(D_2)}$. Continue the steps, an argument similar to the one used in (II) shows that $H$ is not of discrete corepresentation type.
       \item[(iii)]If $$D_1\cdot S(C_k)=S(C_k)+D_2$$in $\Bbb{Z}\mathcal{S}$, where $\sqrt{\dim_{\k}(D_2)}> \sqrt{\dim_{\k}(D_1)}$. By adopting the same procedure as in (III)(i) and (ii), we can show that either $H$ is not of discrete corepresentation type or we can get an infinite sequence
     $$D_i\cdot S^{i}(C)=D_{i-1}+D_{i+1}$$
     such that $\sqrt{\dim_{\k}(D_{i+1})}> \sqrt{\dim_{\k}(D_i)}$, where $i\geq 1$ and $D_{0}=S(C_k)$. In the later case, one can finally get an infinite-dimensional simple subcoalgebra, which is impossible.
      \end{itemize}
     In conclusion, $H$ is not of discrete corepresentation type.
  \end{itemize}
\end{proof}
Now we can characterize the link quiver of $H$ when $H$ is of discrete corepresentation type and $(H_{(1)})_0$ is infinite-dimensional.
\begin{theorem}\label{thm:H10infinite}
Let $H$ be a non-cosemisimple Hopf algebra over $\k$ with the dual Chevalley property of discrete corepresentation type and $H_{(1)}$ be its link-indecomposable component containing $\k1$. Denote ${}^1\mathcal{S}=\{C\in\mathcal{S}\mid \k1+C\neq \k1\wedge C\}$. If the coradical of $H_{(1)}$ is infinite-dimensional, then one of the following three cases occurs:
  \begin{itemize}
  \item[(1)]$\mid{}^1\mathcal{P}\mid=1$ and ${}^1\mathcal{S}=\{\k g\}$ for some $g\in G(H)$;
  \item[(2)]$\mid{}^1\mathcal{P}\mid=2$ and ${}^1\mathcal{S}=\{\k g, \k h\}$ for some different group-like elements $g, h\in G(H)$;
  \item[(3)]$\mid{}^1\mathcal{P}\mid=1$ and ${}^1\mathcal{S}=\{C_k\}$ for some $C_k\in\mathcal{S}$ with $\dim_{\k}(C_k)=4$.
  \end{itemize}
\end{theorem}
\begin{proof}
For any $n\geq 2$, let $\mathcal{S}(n)$ be the set of all the $n^2$-dimensional simple subcoalgebras of $H$.
\begin{itemize}
  \item[(I)]
If $\mid{}^1\mathcal{P}\mid\geq 3$, by adopting the same procedure as in the proof of \cite[Theorem 4.2]{YL24}, one can find a finite sub-quiver $\mathrm{Q}^\prime$ of $\mathrm{Q}(H)$ such that the separated quiver $\mathrm{Q}^\prime_s$ of $\mathrm{Q}^\prime$ is not a disjoint union of Dynkin diagrams. Using Lemma \ref{lem:Dynkin}, we know that $H$ is not of discrete corepresentation type.
\item[(II)]
According to Lemma \ref{lem:C>4}, if there exists some $C_k\in {}^1\mathcal{S}$ such that $\dim_{\k}(C_k)\geq 9$, then $H$ is not of discrete corepresentation type.
\item[(III)]
Suppose that $${}^1\mathcal{P}=\{\X, \Y\},$$ where $\X$ is a non-trivial $(1, \k g)$-primitive matrix and $\Y$ is a non-trivial $(1, \C_k)$-primitive matrix for some $g\in G(H)$ and $C_k\in\mathcal{S}(2)$. Note that $H$ is of discrete corepresentation type and ${}^1\mathcal{S}$ contains a $4$-dimensional simple subcoalgebra. Proceeding as in the proof of \cite[Proposition 5.5]{YLL24}, we can get an infinite sequence $(a)$
\begin{eqnarray*}
S(C_k)\cdot C_k&=&\k1+\k g+D_1^{(2)},\\
D_1^{(2)}\cdot S(C_k)&=&S(C_k)+D_2^{(2)},\\
D_2^{(2)}\cdot C_k&=&D_1^{(2)}+D_3^{(2)},\\
&\cdots&\\
D_{2i}^{(2)}\cdot C_k&=&D_{2i-1}^{(2)}+D_{2i+1}^{(2)},\\
D_{2i+1}^{(2)}\cdot S(C_k)&=&D_{2i}^{(2)}+D_{2i+2}^{(2)},\\
&\cdots&
\end{eqnarray*}
or an infinite sequence $(b)$
\begin{eqnarray*}
S(C_k)\cdot C_k&=&\k1+D_1^{(3)},\\
D_1^{(3)}\cdot S(C_k)&=&S(C_k)+D_1^{(4)},\\
D_1^{(4)}\cdot C_k&=&D_1^{(3)}+D_1^{(5)},\\
&\cdots&\\
D_1^{(2i)}\cdot C_k&=&D_1^{(2i-1)}+D_1^{(2i+1)},\\
D_1^{(2i+1)}\cdot S(C_k)&=&D_1^{(2i)}+D_1^{(2i+2)},\\
&\cdots&,
\end{eqnarray*}
where $D_i^{(j)}\in \mathcal{S}(j), g\in G(H)$. Otherwise, we can find a finite sub-quiver $\mathrm{Q}^\prime$ of $\mathrm{Q}(H)$ such that the separated quiver $\mathrm{Q}^\prime_s$ of $\mathrm{Q}^\prime$ is not a disjoint union of Dynkin diagrams, which is a contradiction to Lemma \ref{lem:Dynkin}. Next we deal with case $(a)$ and $(b)$.
In case $(a)$, according to \cite[Corollary 3.9]{YLL24} and Lemma \ref{lemma:P^1=1^P} (2), we have $$\mid{}^{KS(\C_k)K^{-1}}\mathcal{P}\mid=\mid{}^{KS(\C_k)K^{-1}}\mathcal{P}_{\X}\mid+\mid{}^{KS(\C_k)K^{-1}}
\mathcal{P}_{\Y}\mid=4,$$
where $K$ is an invertible matrix over $\k$ such that $KS(\C_k)K^{-1}\in\mathcal{M}$ is the basic multiplicative matrix of $S(C_k)$.
Thus $\mathrm{Q}(H)_s$ contains at least one vertex which is the end vertex of at least 4 arrows. In such a case, we can show that $H$ is not of discrete corepresentation type by Lemma \ref{lem:Dynkin}.
In case $(b)$, one can get an infinite-dimensional simple subcoalgebra, which leads to a contradiction.
\item[(IV)]
By Corollary \ref{coro:schurian}, we know that if $\mid{}^1\mathcal{P}\mid=2$ and ${}^1\mathcal{S}=\{\k g, \k h\}$ for some $g, h\in G(H),$ then $g\neq h.$
\end{itemize}
To conclude, if $H$ is of discrete corepresentation type, then either $\mid{}^1\mathcal{P}\mid=1$ and ${}^1\mathcal{S}=\{C_k\}$ for some $C_k\in\mathcal{S}$ with $\dim_\k(C_k)\leq 4$, or $\mid{}^1\mathcal{P}\mid=2$ and ${}^1\mathcal{S}=\{\k g, \k h\}$ for some different group-like elements $g, h\in G(H)$.
\end{proof}

In the following part, let $H$ be a non-cosemisimple Hopf algebra over $\k$ with the dual Chevalley property of discrete corepresentation type such that the coradical of $H_{(1)}$ is infinite-dimensional.
Next we give an accurate description for $H_{(1)}$ when $H$ is of discrete corepresentation type. We discuss these three cases separately.
\subsection{Case (1)}
Suppose $\mid{}^1\mathcal{P}\mid=1$ and ${}^1\mathcal{S}=\{\k g\}$, where $g\in G(H)$. According to Lemma \ref{lemma:P^1=1^P} (3), we know that $$\mid{}^{\C}\mathcal{P}\mid=\mid\mathcal{P}^{\C}\mid=1.$$
It follows that the link quiver of $H_{(1)}$ is double infinite quiver ${}_{\infty}\mathcal{A}_{\infty}$:
$$
\begin{tikzpicture}
\filldraw [black] (0,0) circle (0.5pt) node[anchor=south]{};
\filldraw [black] (1,0) circle (0.5pt) node[anchor=south]{};
\filldraw [black] (-1,0) circle (0.5pt)node[anchor=north]{};
\filldraw [black] (-2,0) circle (0.5pt)node[anchor=east]{};
\filldraw [black] (-3,0) circle (0pt)node[anchor=east]{$\cdots$};
\filldraw [black] (2,0) circle (0pt)node[anchor=west]{$\cdots$};
\draw[thick, ->] (-0.9,0).. controls (-0.9,0) and (-0.9,0) .. node[anchor=south]{}(-0.1,0) ;
\draw[thick, ->] (0.1,0).. controls (0.1,0) and (0.1,0) .. node[anchor=south]{}(0.9,0) ;
\draw[thick, ->] (1.1,0).. controls (1.1,0) and (1.1,0) .. node[anchor=south]{}(1.9,0) ;
\draw[thick, ->] (-2.9,0).. controls (-2.9,0) and (-2.9,0) .. node[anchor=south]{}(-2.1,0) ;
\draw[thick, ->] (-1.9,0).. controls (-1.9,0) and (-1.9,0) .. node[anchor=south]{}(-1.1,0) ;
\end{tikzpicture}.
$$

Let us give several examples of pointed Hopf algebras whose link quiver is ${}_{\infty}\mathcal{A}_{\infty}$.
\begin{example}\emph{(}\cite[Section 2]{Liu09}\emph{)}
The Hopf algebra $A(n, q)$ is generated by $g, g^{-1}, x$ subjects to relations:
$$
gg^{-1}=g^{-1}g=1,\;\;xg=qgx,\;\;x^n=1-g^n,
$$
where $q\in\k$ is a $n$-th primitive root of unity.
The coalgebra structure and antipode are given by:
$$
\Delta(g)=g\otimes g,\;\; \Delta(x)=x\otimes 1+g\otimes x,
$$
$$
\varepsilon(g)=1,\;\;\varepsilon(x)=0,\;\; S(g)=g^{-1},\;\; S(x)=-g^{-1}x.
$$
\end{example}
\begin{example}\emph{(}\cite[Theorem 5.4]{Iov18a}\emph{)}
The Hopf algebra $H_{\infty}(\chi, \lambda)$ is generated by $g, g^{-1}, x$ subjects to relations:
$$
gg^{-1}=g^{-1}g=1,\;\;xg=\chi(g)gx+\lambda(g)(g-g^2),
$$
where $\chi$ is a $1$-dimensional character such that $\chi(g)=1$ or $\chi(g)$ is not a root of unity and $\lambda\in(\k \langle g, g^{-1}\rangle)^\circ$ is an element in the finite dual Hopf algebra such that $\lambda(hf)=\chi(h)\lambda(f)+\lambda(h)$ for any $f, h\in\k \langle g, g^{-1}\rangle.$
The coalgebra structure and antipode are given by:
$$
\Delta(g)=g\otimes g,\;\; \Delta(x)=1\otimes x+x\otimes g,
$$
$$
\varepsilon(g)=1,\;\;\varepsilon(x)=0,\;\; S(g)=g^{-1},\;\; S(x)=-xg^{-1}.
$$
\end{example}
Now we give a description of the algebra structure of $H_{(1)}$ in this case.
\begin{proposition}\label{prop:=Hinfi}
Let $H$ be a non-cosemisimple Hopf algebra over $\k$ with the dual Chevalley property such that the coradical of $H_{(1)}$ is infinite-dimensional. If $\mid{}^1\mathcal{P}\mid=1$ and ${}^1\mathcal{S}=\{\k g\}$, where $g\in G(H)$, then $H$ is of discrete corepresentation type. Moreover, $H_{(1)}$ is isomorphic to $A(n, q)$ or $H_{\infty}(\chi, \lambda)$.
\end{proposition}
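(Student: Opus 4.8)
I would prove the two implications separately, using throughout the structural facts — recorded in Lemma~\ref{lemma:P^1=1^P}(3) and in the paragraph preceding the proposition — that under the standing hypotheses $|{}^1\mathcal{P}|=1$ and ${}^1\mathcal{S}=\{\k g\}$ one has $|{}^{\C}\mathcal{P}|=|\mathcal{P}^{\C}|=1$ for every $\C\in\mathcal{M}$, that $H_{(1)}$ is pointed, and that $\mathrm{Q}(H_{(1)})\cong{}_\infty\mathcal{A}_\infty$. First I would record that $g\neq1$: if $g=1$, the only simple subcoalgebra directly linked to $\k1$ would be $\k1$ itself, so $\mathrm{Q}(H_{(1)})$ would be a single vertex with one loop and $(H_{(1)})_0=\k1$ would be finite-dimensional, contrary to hypothesis. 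Since $\mathrm{Q}(H_{(1)})\cong{}_\infty\mathcal{A}_\infty$ has infinitely many vertices, which are precisely the grouplike elements of $H_{(1)}$, and since translating the unique non-trivial $(1,\k g)$-skew-primitive element by grouplikes shows that the arrows of $\mathrm{Q}(H_{(1)})$ are exactly $\k hg\to\k h$ for $h\in G(H_{(1)})$, connectedness of $\mathrm{Q}(H_{(1)})$ forces $G(H_{(1)})=\langle g\rangle$; hence $G(H_{(1)})\cong\mathbb{Z}$ and $g$ has infinite order. So $H_{(1)}$ is a pointed Hopf algebra with infinite cyclic group of grouplikes.

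For the implication ``$H$ of discrete corepresentation type $\Rightarrow$ $H_{(1)}\cong A(n,q)$ or $H_\infty(\chi,\lambda)$'', I would first observe, exactly as in the proof of Theorem~\ref{Thm:H10finite}, that the fully faithful corestriction functor from finite-dimensional right $H_{(1)}$-comodules to finite-dimensional right $H$-comodules preserves indecomposability and dimension vectors, so $H_{(1)}$ is again of discrete corepresentation type. Thus $H_{(1)}$ is a pointed Hopf algebra of discrete corepresentation type with infinite-dimensional coradical; being link-indecomposable it coincides with its own link-indecomposable component containing $\k1$, and its link quiver is ${}_\infty\mathcal{A}_\infty$, equivalently $G(H_{(1)})\cong\mathbb{Z}$. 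By the classification of the link-indecomposable component $H_{(1)}$ of a pointed Hopf algebra of discrete corepresentation type carried out in \cite{Iov18a} and \cite{ISSZ24}, the only such algebras with infinite cyclic group of grouplikes are $A(n,q)$ and $H_\infty(\chi,\lambda)$; I would quote the precise relations and the parameter constraints ($q$ a primitive $n$-th root of unity, resp.\ $\chi(g)=1$ or $\chi(g)$ not a root of unity) from \cite[Section~6]{ISSZ24} and \cite[Theorem~5.4]{Iov18a} rather than re-derive them.

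For the converse, I would keep the standing hypotheses (so that the isomorphism type of $H_{(1)}$ is in fact not needed here, only that $A(n,q)$ and $H_\infty(\chi,\lambda)$ do satisfy them). By Lemma~\ref{lemma:P^1=1^P}(3)(i) we have $|{}^{\C}\mathcal{P}|=|\mathcal{P}^{\C}|=|{}^1\mathcal{P}|=1$ for every $\C\in\mathcal{M}$, that is, every vertex of $\mathrm{Q}(H)$ is the start vertex of exactly one arrow and the end vertex of exactly one arrow. I would then reproduce the converse half of the proof of Theorem~\ref{Thm:H10finite}: for any finite-dimensional subcoalgebra $H^\prime$ of $H$, the link quiver $\mathrm{Q}(H^\prime)$ is a subquiver of $\mathrm{Q}(H)$ in which each vertex is the start and the end of at most one arrow, so the basic algebra Morita equivalent to $(H^\prime)^{*}$ is Nakayama and hence of finite representation type by \cite[\textsection VI. Theorem 2.1]{ARS95}, whence $H^\prime$ is of finite corepresentation type; applying this to $\operatorname{cf}(\underline{d})=\sum_{\underline{dim}(M)=\underline{d}}\operatorname{cf}(M)$, which is finite-dimensional by \cite[Lemma~2.6]{Iov18b}, shows that for every finite dimension vector $\underline{d}$ there are only finitely many isomorphism classes of indecomposable right $H$-comodules of dimension vector $\underline{d}$, so $H$ is of discrete corepresentation type.

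The only genuinely substantial step is the appeal to the classification of \cite{Iov18a} and \cite{ISSZ24} in the first implication; everything on our side there is the structural reduction of the first paragraph (pointedness of $H_{(1)}$, $G(H_{(1)})\cong\mathbb{Z}$, link quiver ${}_\infty\mathcal{A}_\infty$), after which the identification of $H_{(1)}$ — and in particular the claim that no further family occurs in the infinite-dimensional-coradical case — is imported. I expect the main care to be needed in matching the part of that classification corresponding to an infinite cyclic group of grouplikes with exactly the two families $A(n,q)$ and $H_\infty(\chi,\lambda)$ displayed just before the proposition, and in checking that these two Hopf algebras are themselves of discrete corepresentation type (as recorded in \cite{Liu09} and \cite{Iov18a}), which gives a consistency check on the converse.
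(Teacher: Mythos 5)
Your proposal is correct and follows essentially the same route as the paper: for the ``if'' direction, Lemma~\ref{lemma:P^1=1^P}(3) gives $\mid{}^{\C}\mathcal{P}\mid=\mid\mathcal{P}^{\C}\mid=1$ for all $\C\in\mathcal{M}$ and one repeats the converse half of the proof of Theorem~\ref{Thm:H10finite}; for the ``only if'' direction, the corestriction functor and Lemma~\ref{lemma:P^1=1^P}(3) show that $H_{(1)}$ is a pointed Hopf algebra of discrete corepresentation type, and the classification of \cite[Section~6]{ISSZ24} is then invoked. Your additional remarks (that $g\neq 1$, that $G(H_{(1)})\cong\Z$, and that the isomorphism type of $H_{(1)}$ is not actually used in the ``if'' direction) only make explicit what the paper leaves implicit in the surrounding discussion.
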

\begin{proof}
Since $\mid{}^1\mathcal{P}\mid=1$ and ${}^1\mathcal{S}=\{\k g\}$, where $g\in G(H)$, it follows from Lemma \ref{lemma:P^1=1^P} (3) that
$$\mid{}^{\C}\mathcal{P}\mid=\mid\mathcal{P}^{\C}\mid=1.$$
Using the same argument as in the proof of Theorem $\ref{Thm:H10finite}$, we can easily show that $H$ is of discrete corepresentation type. According to Lemma \ref{lemma:P^1=1^P} (3), $H_{(1)}$ is a pointed Hopf algebra of discrete corepresentation type. Using \cite[Section 6]{ISSZ24}, we know that $H_{(1)}$ is isomorphic to $A(n, q)$ or $H_{\infty}(\chi, \lambda)$.
\end{proof}
\subsection{Case (2)}
Suppose $\mid{}^1\mathcal{P}\mid=2$ and ${}^1\mathcal{S}=\{\k g, \k h\}$, where $g, h$ are two different group-like elements.
It follows from Lemma \ref{lemma:P^1=1^P} (3) that $H_{(1)}$ is a pointed Hopf algebra of discrete corepresentation type.

Let $(m, n)$ be a pair of integers such that $(m, n)\neq \pm (1, 1)$.
Let $\mathrm{Q}^{m, n}$ be a quiver defined as follows: The set of vertices of $\mathrm{Q}^{m, n}_0=\langle g, h\mid gh=hg, g^m=h^n\rangle$, for each vertex, there is a unique arrow from $g^ih^j$ to $g^{i+1}h^j$ and a unique arrow from $g^ih^j$ to $g^{i}h^{j+1}$.
According to \cite[Theorem 4.9]{ISSZ24}, we know that the link quiver of $H_{(1)}$ in this case is $\mathrm{Q}^{m, n}.$

In \cite[Section 5]{ISSZ24}, the authors construct a family of Hopf algebra $B^{m, n}(\lambda, s, t, k)$ such that the link quiver of $B^{m, n}(\lambda, s, t, k)$ is $\mathrm{Q}^{m, n}.$
\begin{example}\emph{(}\cite[Definition 5.11]{ISSZ24}\emph{)}
For pairs of integers $(m, n)\neq \pm(1, 1)$ and $m+n\in2\Bbb{Z},$
define $B^{m, n}(\lambda, s, t, k)$ be the Hopf algebra generated by $g, h, x, y$ satisfying the following conditions, where $\lambda\neq0, s, t, k\in\k.$
$$
gh=hg,\;\;g^m=h^n,\;\; xy+\lambda yx=k(1-gh),
$$
$$
gx+xg=0,\;\;\lambda hx+xh=0,\;\;x^2=s(1-g^2),
$$
$$
hy+yh=0,\;\; gy+\lambda yg=0,\;\;y^2=t(1-h^2),
$$
$$
\Delta(g)=g\otimes g,\;\;\Delta(h)=h\otimes h,\;\; \Delta(x)=1\otimes x+x\otimes g\;\;\Delta(y)=1\otimes y+y\otimes h,
$$
$$
\varepsilon(g)=\varepsilon(h)=1,\;\;\varepsilon(x)=\varepsilon(y)=0,
$$
$$
S(g)=g^{-1}, \;\;S(h)=h^{-1},\;\;S(x)=-xg^{-1},\;\;S(y)=-yh^{-1}.
$$
\end{example}
Using \cite[Theorem 5.15]{ISSZ24}, we know that the Hopf algebra $B^{m, n}(\lambda, s, t, k)$ is of discrete corepresentation type if and only if $m\neq n$ or $m=n=0.$ Besides, the authors classified the Hopf algebra $B^{m, n}(\lambda, s, t, k)$ up to isomorphism in \cite[Lemma 5.16]{ISSZ24} and showed that there are more constraints on the parameters $\lambda, s, t, k$ in \cite[Lemma 5.18]{ISSZ24}.
As mentioned above, we can now obtain the following proposition.
\begin{proposition}
Let $H$ be a non-cosemisimple Hopf algebra over $\k$ with the dual Chevalley property of discrete corepresentation type such that the coradical of $H_{(1)}$ is infinite-dimensional. If $\mid{}^1\mathcal{P}\mid=2$ and ${}^1\mathcal{S}=\{\k g, \k h\}$, where $g, h$ are two different group-like elements, then $H_{(1)}$ is isomorphic to exactly one of the following:
\begin{itemize}
  \item[(1)]$B^{m, n}(\lambda, 0, 0, 0)$, $\lambda\in\k^\times$ and $\lambda^{gcd(m, n)=1}$ if $(m, n)\neq (0, 0)$;
  \item[(2)]$B^{m, n}(-1, 0, 1, 0)$, if both $m, n$ are even;
  \item[(3)]$B^{m, n}(-1, 1, 0, 0)$, if both $m, n$ are even;
  \item[(4)]$B^{m, n}(-1, 1, 1, 0)$, if both $m, n$ are even;
  \item[(5)]$B^{m, n}(1, 1, 1, k)$, $k\in\k$;
  \item[(6)]$B^{m, n}(1, 1, 0, 0)$;
  \item[(6')]$B^{m, n}(1, 0, 1, 0)$;
  \item[(7)]$B^{m, n}(1, 1, 0, 1)$;
  \item[(7')]$B^{m, n}(1, 0, 1, 0)$;
  \item[(8)]$B^{m, n}(1, 0, 0, 1)$.
\end{itemize}
\end{proposition}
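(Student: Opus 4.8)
The plan is to reduce the statement to the classification of pointed Hopf algebras of discrete corepresentation type carried out in \cite{ISSZ24}, essentially assembling the ingredients already recorded in this subsection. First, $H_{(1)}$ is a Hopf subalgebra of $H$ by \cite[Proposition 3.16]{Li22}, so by the same argument as in the proof of Theorem \ref{Thm:H10finite} the embedding of the category of finite-dimensional right $H_{(1)}$-comodules into that of finite-dimensional right $H$-comodules shows that $H_{(1)}$ is of discrete corepresentation type. Since ${}^1\mathcal{S}=\{\k g,\k h\}$, all simple subcoalgebras directly linked to $\k 1$ are $1$-dimensional, so Lemma \ref{lemma:P^1=1^P}(3) applies; in particular $H_{(1)}$ is pointed and $\mid{}^{\C}\mathcal{P}\mid=\mid\mathcal{P}^{\C}\mid=2$ for every $\C\in\mathcal{M}$, that is, every vertex of $\mathrm{Q}(H_{(1)})$ has exactly two incoming and two outgoing arrows, the two arrows out of $\k 1$ ending at the distinct vertices $\k g$ and $\k h$. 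By \cite[Theorem 4.9]{ISSZ24} this forces $G(H_{(1)})=\langle g,h\mid gh=hg,\ g^m=h^n\rangle$ for some pair $(m,n)\neq\pm(1,1)$ and $\mathrm{Q}(H_{(1)})\cong\mathrm{Q}^{m,n}$.

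Next I would apply the classification of pointed Hopf algebras of discrete corepresentation type. Since $H_{(1)}$ is such a Hopf algebra with link quiver $\mathrm{Q}^{m,n}$, \cite[Section 6]{ISSZ24} gives $H_{(1)}\cong B^{m,n}(\lambda,s,t,k)$ for some $\lambda\in\k^\times$ and $s,t,k\in\k$; moreover, by \cite[Theorem 5.15]{ISSZ24}, the discreteness of $B^{m,n}(\lambda,s,t,k)$ forces $m\neq n$ or $m=n=0$. It then remains to bring the quadruple $(\lambda,s,t,k)$ into normal form: \cite[Lemma 5.16]{ISSZ24} records the isomorphisms among the algebras $B^{m,n}(\lambda,s,t,k)$ induced by rescaling the skew-primitive generators $x,y$ and by Hopf algebra automorphisms, and \cite[Lemma 5.18]{ISSZ24} records the remaining constraints on $\lambda,s,t,k$; combining these yields precisely the pairwise non-isomorphic representatives listed above.

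I expect the only genuine work beyond invoking these references to be this last step --- checking that the resulting list is both exhaustive and irredundant. The delicate points there are keeping track of the parity of $m$ and $n$, which governs which of the defining relations in \cite[Definition 5.11]{ISSZ24} occur and hence which branches of the list are available, together with the precise normalizations one may perform on $\lambda,s,t,k$ within each branch. Everything else is a formal consequence of Lemma \ref{lemma:P^1=1^P} together with the structural results of \cite{ISSZ24}.
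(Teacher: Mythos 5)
Your proposal is correct and follows essentially the same route as the paper: the paper's proof simply invokes Lemma \ref{lemma:P^1=1^P}(3) to conclude that $H_{(1)}$ is a pointed Hopf algebra of discrete corepresentation type and then cites \cite[Section 6]{ISSZ24} for the list. Your version merely spells out the intermediate steps (the comodule-category embedding giving discreteness of $H_{(1)}$, and the roles of \cite[Theorem 4.9, Theorem 5.15, Lemmas 5.16 and 5.18]{ISSZ24}) that the paper leaves implicit.
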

\begin{proof}
It follows from Lemma \ref{lemma:P^1=1^P} (3) that $H_{(1)}$ is a pointed Hopf algebra of discrete corepresentation type. According to \cite[Section 6]{ISSZ24}, the proof is done.
\end{proof}
\subsection{Case (3)}\label{subsection4.3}
In this subsection, let $\mathcal{S}^\prime$ be the set of simple subcoalgebras of $H_{(1)}$ and $\operatorname{Gr}((H_{(1)})_0$-comod$)$ be the Grothendieck ring of the category of finite-dimensional right $(H_{(1)})_0$-comodules. Next we give a description of the structures of $\operatorname{Gr}((H_{(1)})_0$-comod$)$ and determine the link quiver of $H_{(1)}$ in the case that $\mid{}^1\mathcal{P}\mid=1$ and ${}^1\mathcal{S}=\{C_k\}$, where $\dim_{\k}(C_k)=4$.

It follows from \cite[Proposition 3.16]{Li22} that $H_{(1)}$ is a Hopf subalgebra.
Using Lemma \ref{lem:Grring}, we know that $\operatorname{Gr}((H_{(1)})_0$-comod$)$ is isomorphic to $\Bbb{Z}\mathcal{S}^\prime.$ Thus we only need to focus on the structures of $\Bbb{Z}\mathcal{S}^\prime.$
Before that, we give a new construction of a complete family of non-trivial $(\C, \D)$-primitive matrices over $H$ for any $C, D\in\mathcal{S}$ with basic multiplicative matrices $\C, \D$, respectively.

For any matrix $\A=(a_{ij})_{r\times s}$ and $\B=(b_{ij})_{u\times v}$ over $H$, define $\A\odot \B$ as follow
 $$\A\odot \B=
\left(\begin{array}{ccc}
      a_{11}\B& \cdots &  a_{1s}\B  \\
      \vdots  & \ddots & \vdots  \\
      a_{r1}\B&  \cdots & a_{rs}\B
    \end{array}\right).$$

For any $B, C\in\mathcal{S}$ with basic multiplicative matrices $\B, \C$ respectively.
Since $H$ has the dual Chevalley property, it follows from
\cite[Proposition 2.6(2)]{Li22} that there exists an invertible matrix $L$ over $\k$ such that
\begin{equation*}
L
(\C\odot\B) L^{-1}=
\left(\begin{array}{cccc}
      \F_1 & 0 & \cdots & 0  \\
      0 & \F_2 & \cdots & 0  \\
      \vdots & \vdots & \ddots & \vdots  \\
      0 & 0 & \cdots & \F_{u_{(\C, \B)}}
    \end{array}\right),
    \end{equation*}
where $\F_1, \F_2, \cdots, \F_t$ are basic multiplicative matrices over $H$.

Let $\mathcal{M}$ denote the set of representative elements of basic multiplicative matrices over $H$ for the similarity class.
For any $C\in{}^1\mathcal{S}$ with basic multiplicative matrix $\C\in\mathcal{M}$, we can fix a complete family $\{\mathcal{X}^{(\gamma_{1, C})}\}_{\gamma_{1, \C}\in\Gamma_{1, \C}}$ of non-trivial $(1, \C)$-primitive matrices.

Denote
\begin{eqnarray*}
{^1\mathcal{P}}:=\bigcup\limits_{C\in {}^1\mathcal{S}} \{\mathcal{X}^{(\gamma_{1, \mathcal{C}})}\mid \gamma_{1, \mathcal{C}}\in\Gamma_{1, \mathcal{C}}\}.
\end{eqnarray*}

Then for any non-trivial $(1, \C)$-primitive matrix $\Y\in{^1\mathcal{P}}$ and $\B\in\mathcal{M}$, we have
\begin{eqnarray*}
\left(\begin{array}{cc}
I&0\\
0&L
 \end{array}\right)
\left(
\left(\begin{array}{cc}
1&\Y\\
0&\C
 \end{array}\right)\odot \B\right)
 \left(\begin{array}{cc}
I&0\\
0&L^{-1}
 \end{array}\right)
=\left(\begin{array}{ccccccc}
    \mathcal{B}  & {\mathcal{Y}^\prime_{ 1}} & {\mathcal{Y}^\prime_{ 2}} & \cdots & {\mathcal{Y}^\prime_{ u_{(\mathcal{C}, \mathcal{B})}}}  \\
    0 &   \mathcal{F}_{1} &0&\cdots  &  0 \\
    0& 0&\mathcal{F}_{2}&\cdots &0\\
    \vdots  &\vdots  &\vdots& \ddots & \vdots  \\
    0   & 0 &0  &\cdots& \mathcal{F}_{u_{(\mathcal{C}, \mathcal{B})}}
  \end{array}\right),
\end{eqnarray*}
where $\F_1, \F_2, \cdots, \F_{u_{(\C, \B)}}\in \mathcal{M}$.

Denote
\begin{eqnarray*}
^{\B}\mathcal{P}^{\prime}_{\Y}:=\{\Y^\prime_{ i}\mid 1\leq i\leq u_{(\C, \B)}\},
\end{eqnarray*}
\begin{eqnarray*}
^{\B}\mathcal{P}^{\prime}:=\bigcup\limits_{\Y\in{{}^1\mathcal{P}}}{}^{\B}\mathcal{P}^{\prime}_{\Y},\;\;\; \mathcal{P}^{\prime}_{\Y}:=\bigcup\limits_{\B\in \mathcal{M}}{}^{\B}\mathcal{P}^{\prime}_{\Y}.
\end{eqnarray*}
Moreover, denote
\begin{eqnarray*}
\mathcal{P}^{\prime}:=\bigcup\limits_{\B\in \mathcal{M}}{^{\B}\mathcal{P}^{\prime}}=\bigcup\limits_{\Y\in{{}^1\mathcal{P}^{\prime}}}\mathcal{P}^{\prime}_{\Y}.
\end{eqnarray*}

According to \cite[Corollary 3.6]{Rad77}, since $H$ has the dual Chevalley property, the antipode $S$ of $H$ is bijective. Then for the mixed Hopf module $H_1/H_0$ in ${}^H\mathcal{M}_H$, we have $$  {}^{coH_0}(H_1/H_0)\otimes H_0\cong H_1/H_0,$$
where ${}^{coH_0}(H_1/H_0)$ is the left coinvariants of $H_0$ in $H_1/H_0$.
And the isomorphism maps $\overline{x}\otimes h$ to $\overline{x}\cdot h$, where $h\in H_0, \overline{x}\in  {}^{coH_0}(H_1/H_0)$.

The proof of the following lemma can be completed by the method analogous to that used in the proof of \cite[Remark 3.6, Corollary 3.9 and Theorem 3.10]{YLL24}.
\begin{lemma}\label{coro:BXcompleteprime}
\begin{itemize}
  \item[(1)]With the notations above, we have
  \begin{itemize}
    \item[(i)]the cardinal number $\mid{}^{\B}\mathcal{P}^{\prime}_{ \Y} \mid=u_{(\C, \B)};$
    \item[(ii)]the union $\mathcal{P}^{\prime}=\bigcup\limits_{\Y\in{{}^1\mathcal{P}^{\prime}}}\mathcal{P}^{\prime}_{\Y}$ is disjoint.
  \end{itemize}
  \item[(2)]
Let $C, D\in \mathcal{S}$ with basic multiplicative matrices $\C, \D\in\mathcal{M}$ respectively. Denote
$${}^{\C}\mathcal{P}^{\prime \D}:=\{\X^\prime\in \mathcal{P}^\prime\mid\X^\prime \text{ is a non-trivial }(\C, \D)\text{-primitive matrix}\}.$$
Then it is a complete family of non-trivial $(\C, \D)$-primitive matrices. Moreover, we have
$H_1/H_0=\bigoplus_{\X^{\prime}\in\mathcal{P}^{\prime}}\span(\overline{\X^{\prime}})$.
\end{itemize}
\end{lemma}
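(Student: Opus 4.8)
The plan is to repeat the construction of \cite[Section~3]{YLL24} — recalled above for the product $\odot^{\prime}$ — verbatim with $\odot$ in place of $\odot^{\prime}$. The only structural change is the shape of the building block: here one decomposes $\left(\begin{smallmatrix}1&\Y\\0&\C\end{smallmatrix}\right)\odot\B$ instead of $\B\odot^{\prime}\left(\begin{smallmatrix}1&\Y\\0&\C\end{smallmatrix}\right)$, i.e.\ one right‑translates the fixed $(1,\C)$‑primitive matrix $\Y$ by $\B$ on the opposite side. The observation that makes everything transfer is that for any two matrices $\A,\B$ over $H$ one has $\A\odot\B=P(\A\odot^{\prime}\B)P^{-1}$ for the permutation matrix $P$ interchanging the lexicographic and colexicographic orderings of the row (and column) index pairs. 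Granting this, $\odot$ carries a pair of multiplicative matrices to a multiplicative matrix; the decomposition $\C\odot\B\sim\operatorname{diag}(\F_1,\dots,\F_{u_{(\C,\B)}})$ into basic multiplicative matrices is the one computing the product $C\cdot B$ in the based ring $\Bbb{Z}\mathcal S$ of Lemma~\ref{Prop:basedring}; and conjugation by an invertible scalar matrix preserves multiplicativity. Hence the matrix displayed before the statement is multiplicative, so reading off its block entries shows that each $\Y_i$ is an (\emph{a priori} possibly trivial) $(\B,\F_i)$‑primitive matrix; this is the analogue of \cite[Remark~3.6]{YLL24}.

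First I would prove part (1)(i). Since $H$ has the dual Chevalley property, $S$ is bijective by \cite[Corollary~3.6]{Rad77}, so the Hopf‑module structure theorem applies to $H_1/H_0\in{}^{H}\mathcal M_H$ and yields the isomorphism $\varphi:{}^{coH_0}(H_1/H_0)\otimes H_0\to H_1/H_0$, $\overline x\otimes h\mapsto\overline x\cdot h$, recalled just before the statement. The classes $\overline{\X_C^{(\gamma_C)}}$ ($C\in{}^1\mathcal S$, $\gamma_C\in\Gamma_C$) span $V:={}^{coH_0}(H_1/H_0)$, and each $\span(\overline{\X_C^{(\gamma_C)}})$ is a simple right $C$‑comodule. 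Fix $\Y\in{}^1\mathcal P$, a $(1,\C)$‑primitive matrix with $\C$ of size $r\times r$ (so $\dim_{\k}\span(\overline\Y)=r$), and $B\in\mathcal S$ with $\B\in\mathcal M$ of size $q\times q$ (so $\dim_{\k}B=q^{2}$). Restricting $\varphi$ to $\span(\overline\Y)\otimes B$ identifies it with $\span(\overline\Y)\cdot B\subseteq H_1/H_0$, so $\dim_{\k}(\span(\overline\Y)\cdot B)=rq^{2}$. On the other hand, the block identity before the statement gives $\span(\overline\Y)\cdot B=\sum_{i=1}^{u_{(\C,\B)}}\span(\overline{\Y_i})$, where $\Y_i$ has size $q\times f_i$ (so $\dim_{\k}\span(\overline{\Y_i})\le qf_i$), and $\sum_i f_i=rq$ because $\C\odot\B$ has size $rq$. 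Thus $rq^{2}\le\sum_i qf_i=rq^{2}$, forcing equality throughout: each $\span(\overline{\Y_i})$ has dimension $qf_i>0$, so each $\Y_i$ is non‑trivial; and $\span(\overline\Y)\cdot B=\bigoplus_{i=1}^{u_{(\C,\B)}}\span(\overline{\Y_i})$ is a direct sum, so the $\Y_i$ are pairwise distinct matrices. Hence $\mid{}^{\B}\mathcal P^{\prime}_{\Y}\mid=u_{(\C,\B)}$; this is the analogue of \cite[Corollary~3.9]{YLL24}.

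Next I would deduce (1)(ii) and (2). Since $V=\bigoplus_{C\in{}^1\mathcal S}\bigoplus_{\gamma_C}\span(\overline{\X_C^{(\gamma_C)}})$ and $H_0=\bigoplus_{B\in\mathcal S}B$, the isomorphism $\varphi$ gives a direct‑sum decomposition $H_1/H_0=\bigoplus_{C,\gamma_C}\bigoplus_{B\in\mathcal S}\span(\overline{\X_C^{(\gamma_C)}})\cdot B$; substituting $\span(\overline{\X_C^{(\gamma_C)}})\cdot B=\bigoplus_{i}\span(\overline{\Y_i})$ from the previous step yields $H_1/H_0=\bigoplus_{\X^{\prime}\in\mathcal P^{\prime}}\span(\overline{\X^{\prime}})$, which is the ``moreover'' assertion of (2) and shows at once that $\mathcal P^{\prime}=\bigcup_{\Y\in{}^1\mathcal P}\mathcal P^{\prime}_{\Y}$ is disjoint, i.e.\ (1)(ii). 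Finally, for $C,D\in\mathcal S$ with $\C,\D\in\mathcal M$, restricting this decomposition to the summands $\span(\overline{\X^{\prime}})$ with $\X^{\prime}$ a $(\C,\D)$‑primitive matrix — that is, with $\X^{\prime}\in{}^{\C}\mathcal P^{\prime\D}$ — recovers the ``$(C,D)$‑part'' of $H_1/H_0$, which by \cite[Corollary~2.11, Lemma~2.17]{YLL24} is $(C\wedge D)/(C+D)$. Hence $(C\wedge D)/(C+D)\cong\bigoplus_{\X^{\prime}\in{}^{\C}\mathcal P^{\prime\D}}\span(\overline{\X^{\prime}})$, so ${}^{\C}\mathcal P^{\prime\D}$ is a complete family of non‑trivial $(\C,\D)$‑primitive matrices, completing (2).

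The main obstacle is not any single step but the compatibility bookkeeping showing that the $\odot$‑construction is genuinely the mirror of the $\odot^{\prime}$‑construction: one must verify that $\A\odot\B$ and $\A\odot^{\prime}\B$ differ only by conjugation by a permutation matrix, that the right $H_0$‑comodule structure $\varphi$ transports onto $\span(\overline\Y)\otimes B$ is the codiagonal one, with coefficient coalgebra $C\cdot B$ (and not $B\cdot C$), and — the delicate point — that the dimension estimate in the second paragraph is \emph{tight}, so that no block $\Y_i$ collapses into the coradical. Once these are in place, the remaining arguments are identical to those of \cite[Remark~3.6, Corollary~3.9, Theorem~3.10]{YLL24} with the two $\odot$‑products interchanged.
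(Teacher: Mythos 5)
Your proposal is correct and takes essentially the same route as the paper: the paper gives no details here, stating only that the proof is analogous to \cite[Remark 3.6, Corollary 3.9 and Theorem 3.10]{YLL24}, and your write-up carries out exactly that analogy, replacing $\odot^{\prime}$ by $\odot$ and rerunning the tight dimension count through the Hopf-module isomorphism ${}^{coH_0}(H_1/H_0)\otimes H_0\cong H_1/H_0$. The permutation-conjugation relation between $\A\odot\B$ and $\A\odot^{\prime}\B$ and the identification of the bottom-right block with the product $C\cdot B$ in $\Bbb{Z}\mathcal{S}$ are verified correctly, so no gap remains.
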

Now it is not difficult to verify the following lemma.
\begin{lemma}\label{lem:center}
Let $H$ be a non-cosemisimple Hopf algebra over $\k$ with the dual Chevalley property. If ${}^1\mathcal{S}=\{C_k\}$, then both $C_k$ and $S(C_k)$ are in the center of $\Bbb{Z}\mathcal{S}$.
\end{lemma}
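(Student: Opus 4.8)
The plan is to prove that $C_k$ is central in $\Bbb{Z}\mathcal{S}$ and then deduce the same for $S(C_k)$. I would begin by recording two facts about the antipode. First, by Lemma~\ref{Prop:basedring} the map $C\mapsto S(C)$ is an anti-involution of the based ring $\Bbb{Z}\mathcal{S}$; in particular it is involutive, so $S^2(C)=C$ for every $C\in\mathcal{S}$, and it carries the center of $\Bbb{Z}\mathcal{S}$ onto itself. Hence once $C_k$ is shown to be central, applying this anti-involution to the relations $C_i\cdot C_k=C_k\cdot C_i$ yields $S(C_k)\cdot S(C_i)=S(C_i)\cdot S(C_k)$ for all $i$, so $S(C_k)$ is central as well. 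Second, since $S$ is a bijective anti-coalgebra endomorphism of $H$ fixing $\k1$ and preserving the coradical filtration, one has $S(C\wedge D)=S(D)\wedge S(C)$ and $S(C+D)=S(C)+S(D)$ for all $C,D\in\mathcal{S}$; comparing dimensions and using $\dim_{\k}S(C)=\dim_{\k}C$ in the definition of $\mathrm{Q}(H)$, the number of arrows $D\to C$ in $\mathrm{Q}(H)$ equals the number of arrows $S(C)\to S(D)$ (generalising Lemma~\ref{lemma:P^1=1^P}(1)(i), which is the case $C=\k1$).

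Write $C_i\cdot C_k=\sum_{t}\alpha^t_{ik}C_t$ and $C_k\cdot C_i=\sum_{t}\alpha^t_{ki}C_t$ in $\Bbb{Z}\mathcal{S}$; it suffices to show $\alpha^t_{ik}=\alpha^t_{ki}$ for all $i,t$. Here I would invoke Lemma~\ref{lemma:P^1=1^P}(4)(ii): the number of arrows $C_t\to C_i$ in $\mathrm{Q}(H)$ equals $\alpha^t_{ik}$. (Lemma~\ref{lemma:P^1=1^P}(4) is phrased under the hypothesis $|{}^1\mathcal{P}|=1$, which holds in the situation at hand; in general the construction of $\mathcal{P}$ in Lemma~\ref{coro:BXcomplete} shows this number is $|{}^1\mathcal{P}|\cdot\alpha^t_{ik}$, and the argument below is unaffected after cancelling the common factor.) Applying the second fact above, and then Lemma~\ref{lemma:P^1=1^P}(4)(ii) again to the pair $S(C_i),S(C_t)$, this number also equals the number of arrows $S(C_i)\to S(C_t)$, which is the multiplicity of $S(C_i)$ in $S(C_t)\cdot C_k$. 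Thus $\alpha^t_{ik}$ equals the multiplicity of $S(C_i)$ in $S(C_t)\cdot C_k$.

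Now I would transport this back. Applying the based-ring anti-involution $C\mapsto S(C)$ to the product $S(C_t)\cdot C_k$ gives $S(C_k)\cdot S^2(C_t)=S(C_k)\cdot C_t$, and it sends $S(C_i)$ to $S^2(C_i)=C_i$; hence $\alpha^t_{ik}$ equals the multiplicity of $C_i$ in $S(C_k)\cdot C_t$. Finally, using the reciprocity $\langle XY,Z\rangle=\langle Y,X^*Z\rangle$ in the unital based ring $\Bbb{Z}\mathcal{S}$ (with $\langle C_a,C_b\rangle=\delta_{ab}$ and $X^*$ the anti-involute of $X$), together with $(S(C_k))^*=S^2(C_k)=C_k$, the multiplicity of $C_i$ in $S(C_k)\cdot C_t$ equals $\langle S(C_k)\cdot C_t,\,C_i\rangle=\langle C_t,\,C_k\cdot C_i\rangle=\alpha^t_{ki}$. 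Therefore $\alpha^t_{ik}=\alpha^t_{ki}$ for all $i,t$, so $C_k$ is central, and $S(C_k)$ is central by the first paragraph. The main obstacle is not any single hard step but the antipode bookkeeping: keeping straight how $S$ reverses the arrows of the link quiver and how the based-ring anti-involution permutes the structure constants, so that the two descriptions of the number of arrows $C_t\to C_i$ line up to give $\alpha^t_{ik}=\alpha^t_{ki}$; the conceptual inputs — the link-quiver reading of the structure constants and $S^2|_{\mathcal{S}}=\mathrm{id}$ — are already available from the cited results.
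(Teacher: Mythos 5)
Your proof is correct, but it takes a genuinely different route from the paper's. The paper establishes $D\cdot C_k=C_k\cdot D$ by introducing a \emph{second} complete family $\mathcal{P}^{\prime}$ of non-trivial primitive matrices, built with the product $\odot$ in place of $\odot^{\prime}$ (Lemma \ref{coro:BXcompleteprime}): counting the arrows ending at a fixed vertex $D$ using $\mathcal{P}$ yields the multiplicities occurring in $D\cdot C_k$, while counting them using $\mathcal{P}^{\prime}$ yields the multiplicities occurring in $C_k\cdot D$; since both families are complete, the two counts agree arrow by arrow, and then $S(C_k)$ is handled by applying the anti-involution exactly as in your first paragraph. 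You instead stay entirely within the single family $\mathcal{P}$ and extract commutativity from two formal symmetries: the antipode symmetry of the link quiver (the number of arrows $D\to C$ equals the number of arrows $S(C)\to S(D)$, via $S(C\wedge D)=S(D)\wedge S(C)$ and bijectivity of $S$) together with the reciprocity $\langle XY,Z\rangle=\langle Y,X^{*}Z\rangle$, which does follow from Ostrik's axioms since $\tau(xy)=\tau(yx)$ and $\tau(x^{*})=\tau(x)$; combined with $(S(C_k))^{*}=C_k$ this gives $\alpha_{ik}^{t}=\alpha_{ki}^{t}$. Both steps check out, and your parenthetical reduction of Lemma \ref{lemma:P^1=1^P}(4) to the hypothesis ${}^{1}\mathcal{S}=\{C_k\}$ by cancelling the common factor $\mid{}^{1}\mathcal{P}\mid$ is legitimate, since the wedge of two finite-dimensional subcoalgebras is finite-dimensional and hence this factor is finite. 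The trade-off: the paper's argument is what motivates setting up $\mathcal{P}^{\prime}$ in subsection \ref{subsection4.3}, which it reuses afterwards (e.g.\ in Lemma \ref{lem:connected}), whereas your argument is more self-contained, needing only Lemmas \ref{Prop:basedring} and \ref{lemma:P^1=1^P} plus standard unital based ring identities.
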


\begin{proof}
For any $C_i\in\mathcal{S}$, suppose that
\begin{eqnarray*}
C_i\cdot C_k=\sum_{i\in I}\alpha_{ik}^t C_t,\\
C_k\cdot C_i=\sum_{i\in I}\alpha_{ki}^t C_t.
\end{eqnarray*}
Combining Lemma \ref{coro:BXcomplete} and \cite[Corollary 3.9]{YLL24}, we know that
$$\mid{}^{\C_i}{}\mathcal{P}^{\C_t}\mid=\mid{}^1\mathcal{P}^{\C_k}\mid\alpha_{ik}^t. $$
Using Lemma \ref{coro:BXcompleteprime}, one can show that
$$\mid{}^{\C_i}{}\mathcal{P}^{\prime\C_t}\mid=\mid{}^1\mathcal{P}^{\C_k}\mid\alpha_{ki}^t. $$
According to \cite[Corollary 2.11 and Lemma 2.17]{YLL24}, we have
\begin{eqnarray*}
&&\frac{1}{\sqrt{\dim_{\k}(C_i)}\sqrt{\dim_{\k}(C_t)}}\dim_{\k}((C_i\wedge C_t)/(C_i+C_t))\\
&=&\mid {}^{\C_i}\mathcal{P}^{\C_t}\mid\\
&=&\mid {}^{\C_i}\mathcal{P}^{\prime \C_t}\mid.
\end{eqnarray*}
It follows that $$\alpha_{ik}^t=\alpha_{ki}^t,$$
which means that $$C_i\cdot C_k=C_k\cdot C_i$$ for any $C_i\in \mathcal{S}.$ Besides, we have $$S(C_i)\cdot S(C_k)=S(C_k)\cdot S(C_i)$$ for any $C_i\in\mathcal{S}$. Thus both $C_k$ and $S(C_k)$ are in the center of $\Bbb{Z}\mathcal{S}$.
\end{proof}

\begin{lemma}\label{lem:connected}
Let $H$ be a non-cosemisimple Hopf algebra over $\k$ with the dual Chevalley property. Denote by $\mathcal{S}^\prime$ the set of simple subcoalgebras of $H_{(1)}$. Suppose that $\mid{}^1\mathcal{P}\mid=1$ and $C_k$ is the unique subcoalgebra contained in ${}^1\mathcal{S}$. Then $D\in\mathcal{S}^\prime$ if and only if there exist some $m, n\in\Bbb{Z}_+$ such that $(C_k)^m\cdot (S(C_k))^n$ contains $D$ with a nonzero coefficient.
\end{lemma}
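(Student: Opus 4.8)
The plan is to recast the statement in terms of the underlying undirected graph of $\mathrm{Q}(H)$. By the structure theory of link-indecomposable components (\cite{Mon95}, \cite[Proposition 3.16]{Li22}), the set $\mathcal{S}^\prime$ of simple subcoalgebras of $H_{(1)}$ is exactly the set of vertices lying in the connected component of $\k1$ in the underlying graph of $\mathrm{Q}(H)$. Since $\mid{}^1\mathcal{P}\mid=1$ and $C_k$ is the unique element of ${}^1\mathcal{S}$, Lemma \ref{lemma:P^1=1^P} (4) describes adjacency explicitly: for $C_i,C_t\in\mathcal{S}$ there is an arrow $C_t\to C_i$ in $\mathrm{Q}(H)$ precisely when $C_t$ occurs with nonzero coefficient in $C_i\cdot C_k$, and by the identity $\alpha_{ik}^t=\alpha_{tk^*}^i$ this happens precisely when $C_i$ occurs in $C_t\cdot S(C_k)$. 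Two further facts are used repeatedly: $\Bbb{Z}\mathcal{S}$ is a based ring (Lemma \ref{Prop:basedring}), so all structure constants are nonnegative and there is no cancellation; and $C_k,S(C_k)$ are central in $\Bbb{Z}\mathcal{S}$ (Lemma \ref{lem:center}), so the word $(C_k)^m(S(C_k))^n$ is unambiguous and may be built up or split off from either side. Writing $T$ for the set of $D\in\mathcal{S}$ occurring in some $(C_k)^m(S(C_k))^n$ (with $\k1\in T$ via $m=n=0$), the goal is to prove $T=\mathcal{S}^\prime$.

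For the inclusion $T\subseteq\mathcal{S}^\prime$ I would induct on $m+n$. The case $m=n=0$ gives $D=\k1\in\mathcal{S}^\prime$. For $m+n\geq1$ I write $(C_k)^m(S(C_k))^n=w\cdot E$ with $E\in\{C_k,S(C_k)\}$ and $w=(C_k)^{m^\prime}(S(C_k))^{n^\prime}$ of strictly smaller word length, expand $w=\sum_j\beta_jC_j$ with $\beta_j\in\Bbb{Z}_+$, and note that $D$ occurring in $w\cdot E$ forces some $C_j$ with $\beta_j\neq0$ such that $D$ occurs in $C_j\cdot E$. By induction $C_j\in\mathcal{S}^\prime$; and by the adjacency description, $D$ occurring in $C_j\cdot C_k$ yields an arrow $D\to C_j$, while $D$ occurring in $C_j\cdot S(C_k)$ (via $\alpha_{jk^*}^D=\alpha_{Dk}^j$) yields an arrow $C_j\to D$. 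In either case $D$ is adjacent to a vertex of the component of $\k1$, hence $D\in\mathcal{S}^\prime$.

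For $\mathcal{S}^\prime\subseteq T$ it suffices to show that $T$ is closed under adjacency in the underlying graph of $\mathrm{Q}(H)$; then $T$, containing $\k1$, contains its whole connected component, which is $\mathcal{S}^\prime$. Given $D^\prime\in T$, say $D^\prime$ occurs in $w=(C_k)^m(S(C_k))^n$, and $D^{\prime\prime}$ adjacent to $D^\prime$: if the arrow is $D^{\prime\prime}\to D^\prime$ then $D^{\prime\prime}$ occurs in $D^\prime\cdot C_k$, hence in $w\cdot C_k=(C_k)^{m+1}(S(C_k))^n$; if the arrow is $D^\prime\to D^{\prime\prime}$ then $D^{\prime\prime}$ occurs in $D^\prime\cdot S(C_k)$ (via $\alpha_{D^{\prime\prime}k}^{D^\prime}=\alpha_{D^\prime k^*}^{D^{\prime\prime}}$), hence in $w\cdot S(C_k)=(C_k)^m(S(C_k))^{n+1}$. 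Either way $D^{\prime\prime}\in T$, completing the proof that $T=\mathcal{S}^\prime$.

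I do not expect a genuine obstacle here: the argument is the combinatorics of the based ring $\Bbb{Z}\mathcal{S}$ glued to the arrow-count of Lemma \ref{lemma:P^1=1^P} (4). The two points that need care, and which I regard as the heart of the proof, are the identification of $\mathcal{S}^\prime$ with the connected component of $\k1$ — which I would justify explicitly by invoking the decomposition of a coalgebra into its link-indecomposable components — and the bookkeeping with the involution symmetry $\alpha_{ik}^t=\alpha_{tk^*}^i$, which is exactly what lets multiplication by $C_k$ realize the in-arrows at a vertex and multiplication by $S(C_k)$ realize the out-arrows.
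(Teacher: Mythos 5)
Your proof is correct and follows the same underlying idea as the paper's: the paper's own proof is a one-line deferral to the argument of \cite[Proposition 3.10]{YL24} combined with the centrality of $C_k$ and $S(C_k)$ in $\Bbb{Z}\mathcal{S}$, and your write-up simply makes that argument explicit — identifying $\mathcal{S}^\prime$ with the connected component of $\k1$ in $\mathrm{Q}(H)$ and using the symmetry $\alpha_{ik}^t=\alpha_{tk^*}^i$ of Lemma \ref{lemma:P^1=1^P} (4) so that multiplication by $C_k$ and $S(C_k)$ realizes in- and out-adjacency. The only added value of your version is that it is self-contained where the paper cites an external reference; there is no substantive difference in method.
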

\begin{proof}
By the same reason in the proof of \cite[Proposition 3.10]{YL24} and the fact that $C_k$ and $S(C_k)$ are in the center of $\Bbb{Z}\mathcal{S}$, we can easily prove the lemma.
\end{proof}
In the following part, suppose $H$ is a non-cosemisimple Hopf algebra over $\k$ with the dual Chevalley property of discrete corepresentation type such that the coradical of $H_{(1)}$ is infinite-dimensional. Suppose $\mid{}^1\mathcal{P}\mid=1$ and ${}^1\mathcal{S}=\{C_k\}$, where $\dim_{\k}(C_k)=4$.
From the fact that simple subcoalgebras are finite-dimensional and the proof of \cite[Proposition 5.5]{YLL24}, we can get an infinite sequence
\begin{eqnarray*}
S(C_k)\cdot C_k&=&\k1+\k g+D_1,\\
D_1\cdot S(C_k)&=&S(C_k)+D_2,\\
D_2\cdot C_k&=&D_1+D_3,\\
&\cdots&\\
D_{2i}\cdot C_k&=&D_{2i-1}+D_{2i+1},\\
D_{2i+1}\cdot S(C_k)&=&D_{2i}+D_{2i+2},\\
&\cdots&,
\end{eqnarray*}
where $g\in G(H)$ and $\dim_{\k}(D_i)=4$ for any $i\geq 1$. Moreover, we have $D_i\neq D_{i+2m}$ for any $i\geq 1, m>0.$ Otherwise, we can find a finite sub-quiver $\mathrm{Q}^\prime$ of $\mathrm{Q}(H)$ such that the separated quiver $\mathrm{Q}^\prime_s$ of $\mathrm{Q}^\prime$ is not a disjoint union of Dynkin diagrams, which is in contradiction with Lemma \ref{lem:Dynkin}.
Moreover, since $$S(C_k)\cdot C_k=\k1+\k g+D_1,$$ it follows that $$S(C_k)\cdot S^2(C_k)=\k1+\k S(g)+S(D_1).$$This means that $S(g)=g$ and $S(D_1)=D_1.$ Using Lemmas \ref{lemma:P^1=1^P} (4) and \ref{lem:center}, we have
$$ C_k\cdot\k g=C_k,$$ and $$C_k \cdot D_1=C_k+S(D_2).$$
It follows that $$C_k\cdot (C_k\cdot S(C_k))=3C_k+S(D_2).$$
For any $n\geq 2$, let $\mathcal{S}(n)$ be the set of all the $n^2$-dimensional simple subcoalgebras of $H$. Now we figure out $C_k\cdot C_k$.
\begin{lemma}
We have $$C_k\cdot C_k=\k h_1+\k h_2+ E$$ in $\Bbb{Z}\mathcal{S}$, where $h_1, h_2$ are two different group-like elements and $E$ is a $4$-dimensional simple subcoalgebra.
\end{lemma}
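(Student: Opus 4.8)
The goal is to compute $C_k \cdot C_k$ in $\Bbb{Z}\mathcal{S}$. We already know from the analysis preceding the lemma that $\dim_{\k}(C_k) = 4$, that $C_k \cdot \k g = C_k$, that $C_k \cdot D_1 = C_k + S(D_2)$, and that $C_k \cdot (C_k \cdot S(C_k)) = 3C_k + S(D_2)$. The plan is to combine these with associativity in $\Bbb{Z}\mathcal{S}$ (Lemma \ref{Prop:basedring}), the centrality of $C_k$ and $S(C_k)$ (Lemma \ref{lem:center}), and the constraint from discrete corepresentation type via Lemma \ref{lem:Dynkin} — no vertex of a finite subquiver's separated quiver may be the start or end of too many arrows. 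First I would write $C_k \cdot C_k = \sum_{i} a_i C_i$ with $a_i \in \Bbb{Z}_+$; comparing square roots of dimensions forces $\sum_i a_i \sqrt{\dim_{\k}(C_i)} = 4$, so the only possibilities are (i) a sum of four group-likes, (ii) two group-likes plus one $4$-dimensional coalgebra, (iii) one group-like plus a $4$- or a $9$-dimensional piece in various combinations, or (iv) a single $16$-dimensional coalgebra, etc. Lemma \ref{lem:C>4} already rules out $9$-dimensional (and larger) simple subcoalgebras appearing in $C_k \cdot C_k$ once we relate them back to $^1\mathcal{S}$ via Lemma \ref{lemma:P^1=1^P} (4): indeed, $C_t$ appears in $C_k \cdot C_k$ iff $C_k$ appears in $C_t \cdot S(C_k)$, i.e. $C_t$ is directly linked into the chain, and one shows $\dim_{\k}(C_t) \le 9$ is forced, then $\le 4$.

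The key computational step is to feed the known relation $S(C_k)\cdot C_k = \k1 + \k g + D_1$ through the associativity identity
\begin{equation*}
C_k \cdot C_k \cdot S(C_k) = C_k \cdot (C_k \cdot S(C_k)) = C_k \cdot (\k1 + \k g + S(D_1)),
\end{equation*}
where I use $S(C_k) \cdot C_k = C_k \cdot S(C_k)$ (centrality) and $S(D_1) = D_1$. The right-hand side equals $C_k + C_k + (C_k + S(D_2)) = 3C_k + S(D_2)$, matching what was already derived. Now suppose for contradiction that $C_k \cdot C_k$ contains a group-like with coefficient $\ge 3$, or contains three or more distinct group-likes: multiplying by $S(C_k)$ on the right (again central) and using $\k h \cdot S(C_k) = S(C_k)$ for each group-like $h$ (which follows from Lemma \ref{lemma:P^1=1^P} (4) exactly as $C_k \cdot \k g = C_k$ was obtained), the coefficient of $S(C_k)$ in $3C_k + S(D_2)$ would have to be $\ge 3$, but it is $0$ (or at most the multiplicity of $S(C_k)$ inside $S(D_2)$, which is $0$ since $\dim_{\k}(S(D_2)) = 4$ is itself simple and $\ne S(C_k)$ as will be checked). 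This pins down that $C_k \cdot C_k$ has exactly two group-like summands, say $\k h_1$ and $\k h_2$ (with $h_1 \ne h_2$ by Corollary \ref{coro:schurian}, the Schurian condition, since a repeated group-like would force a double arrow in the link quiver of the subcoalgebra generated, contradicting discreteness), plus one $4$-dimensional simple subcoalgebra $E$. The remaining possibility to exclude is $C_k \cdot C_k = \k h + (\text{9-dimensional})$ or four group-likes: the former dies by Lemma \ref{lem:C>4} applied to the $9$-dimensional coalgebra (it lies in $^1\mathcal{S}$-reachable part via centrality and Lemma \ref{lem:connected}), the latter dies by the coefficient-of-$S(C_k)$ count above combined with the separated-quiver obstruction of Lemma \ref{lem:Dynkin} (four arrows out of the vertex $C_k$ in $\mathrm{Q}(H)$ after the relevant relabeling, giving a non-Dynkin separated quiver).

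The main obstacle I anticipate is the careful bookkeeping needed to rule out the "one group-like plus a $4$-dimensional piece" alternative $C_k \cdot C_k = \k h + F$ with $\dim_{\k}(F) = 4$ (total dimension $1 + 4 \ne 16$, so this is actually dimension-inconsistent — good, but one must be careful that $\dim_{\k}(C_k \cdot C_k) = 16$ counted with the correct normalization, i.e. $\sqrt{\dim} = 2 \cdot 2 = 4$ means the sum of $\sqrt{\dim}$ of the summands is $4$, so two group-likes plus one $4$-dimensional coalgebra gives $1 + 1 + 2 = 4$ — consistent — while $\k h + F$ gives $1 + 2 = 3 \ne 4$, inconsistent). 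So the genuine alternatives at the level of dimension are: four group-likes ($1+1+1+1$); two group-likes plus one $4$-dim ($1+1+2$); one group-like plus... no, $1 + 3$ is impossible since no simple coalgebra has $\sqrt{\dim} = 3$ available here by Lemma \ref{lem:C>4}; or a single coalgebra with $\sqrt{\dim} = 4$, i.e. $16$-dimensional. The four-group-like case and the single-$16$-dimensional case must both be excluded using the discrete-type obstruction — tracing through Lemma \ref{lemma:P^1=1^P} (4) to see how many arrows emanate from $C_k$ and its neighbors in $\mathrm{Q}(H)$, then invoking Lemma \ref{lem:Dynkin} — and making that argument clean (identifying the explicit offending finite subquiver whose separated quiver is Euclidean or worse) is where the real work lies.
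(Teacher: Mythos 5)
Your overall strategy---enumerate the possible decompositions of $C_k\cdot C_k$ by comparing $\sqrt{\dim}$'s, then kill the bad cases using associativity, the already-computed product $C_k\cdot C_k\cdot S(C_k)=3C_k+S(D_2)$, the Schurian property and the Dynkin obstruction---is exactly the paper's strategy. But your case list has a hole and one of your exclusions rests on a misapplied lemma.

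First, the missing case: the partitions of $4$ as a sum of $\sqrt{\dim}$'s are $4$, $3+1$, $2+2$, $2+1+1$ and $1+1+1+1$, and your final enumeration omits $2+2$, i.e.\ $C_k\cdot C_k=E_1+E_2$ with both $E_i$ four-dimensional. This is a genuine possibility that must be excluded (it is case (III) of the paper's proof): there one observes that $(E_1+E_2)\cdot S(C_k)=3C_k+S(D_2)$ forces some $E_i\cdot S(C_k)$ to contain $C_k$ with coefficient $2$, which by Lemma \ref{lemma:P^1=1^P} (4) produces a double arrow and contradicts Corollary \ref{coro:schurian}. Second, your dismissal of the $1+3$ case ``by Lemma \ref{lem:C>4}'' does not work: that lemma only applies to simple subcoalgebras in ${}^1\mathcal{S}$, i.e.\ those with an arrow to $\k1$, whereas a $9$-dimensional simple $E^{(3)}$ occurring in $C_k\cdot C_k$ is linked to $C_k$, not to $\k1$, so the hypothesis of Lemma \ref{lem:C>4} is not met. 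The paper instead first shows $\k h\cdot S(C_k)=C_k$, deduces $E^{(3)}\cdot S(C_k)=2C_k+S(D_2)$, and again invokes the Schurian property. Relatedly, your claim $\k h\cdot S(C_k)=S(C_k)$ is wrong---for a group-like $\k h$ occurring in $C_k\cdot C_k$, Lemma \ref{lemma:P^1=1^P} (4) gives $\k h\cdot S(C_k)=C_k$---though your coefficient-counting exclusion of the four-group-like case survives once this is corrected (four group-likes would force the coefficient of $C_k$ in $C_k\cdot C_k\cdot S(C_k)$ to be $4$, not $3$). With the $2+2$ case added and the $1+3$ case reargued via Schurian-ness rather than Lemma \ref{lem:C>4}, your proof would match the paper's.
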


\begin{proof}
\begin{itemize}
  \item [(I)]
Suppose that $C_k\cdot C_k=E^{(4)}$, where $E^{(4)}\in\mathcal{S}(4)$.
We have $$E^{(4)}\cdot S(C_k)=C_k\cdot(C_k\cdot S(C_k))=3C_k+S(D_2). $$
According to Lemma \ref{lemma:P^1=1^P} (4), the separated quiver of $\mathrm{Q}(H_{(1)})$ contains a vertex which is the start vertex of $4$ arrows. Evidently, we can find a finite sub-quiver $\mathrm{Q}^\prime$ of $\mathrm{Q}(H)$ such that the separated quiver $\mathrm{Q}^\prime_s$ of $\mathrm{Q}^\prime$ is not a disjoint union of Dynkin diagrams. It follows from
Lemma \ref{lem:Dynkin} that $H$ is not of discrete corepresentation type.
\item [(II)]
Suppose $$C_k\cdot C_k=\k h+E^{(3)}, $$where $h\in G(H)$ and $E^{(3)}\in\mathcal{S}(3)$. A similar argument shows that $$\k h \cdot S(C_k)=C_k$$ and $$E^{(3)}\cdot S(C_k)=2C_k+S(D_2).$$
Combining Corollary \ref{coro:schurian} and Lemma \ref{lemma:P^1=1^P} (2), one can show that $H$ is not of discrete corepresentation type.\\
\item [(III)]Suppose $$C_k\cdot C_k=E_1^{(2)}+E_2^{(2)}, $$where $E_i^{(2)}\in\mathcal{S}(2)$ for $i=1, 2$. A similar argument shows that there exists some $i\in \{1, 2\}$ such that $$E_1^{(2)}\cdot S(C_k)=2C_k,$$ which follows that $H$ is not of discrete corepresentation type.
\item [(IV)]
Suppose $$C_k\cdot C_k=\k h_1+\k h_2+\k h_3+\k h_4, $$ where $h_1, h_2, h_3, h_4\in G(H)$. One can show that $H$ is not of discrete corepresentation type by the same taken.
\item [(V)]Suppose $$C_k\cdot C_k=\k h_1+\k h_2+ E$$ in $\Bbb{Z}\mathcal{S}$, where $h_1, h_2\in G(H)$ and $E\in\mathcal{S}(2)$. According to Corollary \ref{coro:schurian}, we know that $h_1\neq h_2$.
\end{itemize}
\end{proof}
Note that $S(D_1)=D_1$ and $$D_1\cdot S(C_k)=S(C_k)+D_2$$ in $\Bbb{Z}\mathcal{S}$. It follows from Lemma \ref{lemma:P^1=1^P} (4) that $$C_k\cdot D_1=C_k+S(D_2)$$ in $\Bbb{Z}\mathcal{S}$. Moreover, by the fact that $C_k, S(C_k)$ are in the center of $\Bbb{Z}\mathcal{S}$, we have
\begin{eqnarray*}
C_k\cdot(S(C_k)\cdot C_k)&=&C_k\cdot(\k 1+\k g+D_1)\\
&=&3C_k+S(D_2)\\
&=&(C_k\cdot C_k)\cdot S(C_k)\\
&=&(\k h_1+\k h_2+E)\cdot S(C_k).
\end{eqnarray*}
This means that $$E\cdot S(C_k)= C_k +S(D_2)$$ in $\Bbb{Z}\mathcal{S}$.

Recall that a quotient quiver $\overline{\mathrm{Q}}=(\overline{\mathrm{Q}}_0, \overline{\mathrm{Q}}_1)$ of $\mathrm{Q}=(\mathrm{Q}_0, \mathrm{Q}_1)$ is a quiver, whose vertices are blocks of partitions of $\mathrm{Q}_0$ and the number of arrows from $\overline{D}$ to $\overline{E}$ in $\overline{\mathrm{Q}}$ equals the total number of arrows from $D$ to $E$ for all $D\in\overline{D}$ and $E\in\overline{E}$. According to Lemma \ref{lemma:P^1=1^P} (4), the number of arrows from $C_i$ to $C_t$ is equal to the number of arrows from $S(C_t)$ to $S(C_i)$.
Now using Lemma \ref{lemma:P^1=1^P} (4), we can show that $\mathrm{Q}(H_{(1)})$ contains a sub-quiver which is the quotient quiver of the following form:
$$
\begin{tikzpicture}
\filldraw [black] (4,0) circle (0.5pt) node[anchor=north]{${}_{\k h_2^{-1}}$};
\filldraw [black] (2,0) circle (0.5pt) node[anchor=north]{${}_{S(C_k)}$};
\filldraw [black] (0,0) circle (0.5pt) node[anchor=north]{${}_{\k g}$};
\filldraw [black] (-2,0) circle (0.5pt)node[anchor=north]{${}_{C_k}$};
\filldraw [black] (-4,0) circle (0.5pt)node[anchor=north]{${}_{\k h_2}$};
\filldraw [black] (4,1) circle (0.5pt) node[anchor=north]{${}_{\k h_1^{-1}}$};
\filldraw [black] (0,1) circle (0.5pt) node[anchor=north]{${}_{\k 1}$};
\filldraw [black] (-4,1) circle (0.5pt)node[anchor=north]{${}_{\k h_1}$};
\filldraw [black] (4,-1) circle (0.5pt) node[anchor=north]{${}_{S(E)}$};
\filldraw [black] (0,-1) circle (0.5pt) node[anchor=north]{${}_{D_1}$};
\filldraw [black] (-4,-1) circle (0.5pt)node[anchor=north]{${}_{E}$};
\filldraw [black] (2,-2) circle (0.5pt) node[anchor=north]{${}_{D_2}$};
\filldraw [black] (-2,-2) circle (0.5pt)node[anchor=north]{${}_{S(D_2)}$};
\draw[thick, ->] (-3.8,0).. controls (-3.8,0) and (-3.8,0) .. node[anchor=south]{}(-2.2,0);
\draw[thick, ->] (-1.8,0).. controls (-1.8,0) and (-1.8,0) .. node[anchor=south]{}(-0.2,0);
\draw[thick, ->] (0.2,0).. controls (0.2,0) and (0.2,0) .. node[anchor=south]{}(1.8,0);
\draw[thick, ->] (2.2,0).. controls (2.2,0) and (2.2,0) .. node[anchor=south]{}(3.8,0);
\draw[thick, ->] (-3.8,0.9).. controls (-3.8,0.9) and (-3.8,0.9) .. node[anchor=south]{}(-2.2,0.1);
\draw[thick, ->] (-1.8,0.1).. controls (-1.8,0.1) and (-1.8,0.1) .. node[anchor=south]{}(-0.2,0.9);
\draw[thick, ->] (0.2,0.9).. controls (0.2,0.9) and (0.2,0.9) .. node[anchor=south]{}(1.8,0.1);
\draw[thick, ->] (2.2,0.1).. controls (2.2,0.1) and (2.2,0.1) .. node[anchor=south]{}(3.8,0.9);
\draw[thick, ->] (-3.8,-0.9).. controls (-3.8,-0.9) and (-3.8,-0.9) .. node[anchor=south]{}(-2.2,-0.1);
\draw[thick, ->] (-1.8,-0.1).. controls (-1.8,-0.1) and (-1.8,-0.1) .. node[anchor=south]{}(-0.2,-0.9);
\draw[thick, ->] (0.2,-0.9).. controls (0.2,-0.9) and (0.2,-0.9) .. node[anchor=south]{}(1.8,-0.1);
\draw[thick, ->] (2.2,-0.1).. controls (2.2,-0.1) and (2.2,-0.1) .. node[anchor=south]{}(3.8,-0.9);
\draw[thick, ->] (-3.8,-1.1).. controls (-3.8,-1.1) and (-3.8,-1.1) .. node[anchor=south]{}(-2.2,-1.9);
\draw[thick, ->] (-1.8,-1.9).. controls (-1.8,-1.9) and (-1.8,-1.9) .. node[anchor=south]{}(-0.2,-1.1);
\draw[thick, ->] (0.2,-1.1).. controls (0.2,-1.1) and (0.2,-1.1) .. node[anchor=south]{}(1.8,-1.9);
\draw[thick, ->] (2.2,-1.9).. controls (2.2,-1.9) and (2.2,-1.9) .. node[anchor=south]{}(3.8,-1.1);
\end{tikzpicture}.
$$

Based on the consideration above, we can get the link quiver of $H_{(1)}$ when $C_k=S(C_k)$.
\begin{proposition}\label{coro:C=SC}
Let $H$ be a non-cosemisimple Hopf algebra over $\k$ with the dual Chevalley property of discrete corepresentation type such that the coradical of $H_{(1)}$ is infinite-dimensional. If $\mid{}^1\mathcal{P}\mid=1$ and ${}^1\mathcal{S}=\{C_k\}$, where $C_k=S(C_k)$ and $\dim_{\k}(C_k)=4$, then the link quiver $\mathrm{Q}(H_{(1)})$ of $H_{(1)}$ is of the following form:
\begin{eqnarray}\label{quiver4}
\begin{tikzpicture}
\filldraw [black] (4,0) circle (0.5pt) node[anchor=north]{$D_3$};
\filldraw [black] (-4,0) circle (0.5pt)node[anchor=north]{$\k1$};
\filldraw [black] (2,0) circle (0.5pt) node[anchor=north]{$D_2$};
\filldraw [black] (0,0) circle (0.5pt) node[anchor=north]{$D_1$};
\filldraw [black] (-2,0) circle (0.5pt)node[anchor=north]{$C_k$};
\filldraw [black] (6,0) circle (0.5pt) node[anchor=west]{$\cdots$};
\filldraw [black] (-2,1.4) circle (0.5pt) node[anchor=south]{$\k g$};
\draw[thick, ->] (-3.8,0.1).. controls (-3.8,0.1) and (-3.8,0.1) .. node[anchor=south]{}(-2.2,0.1) ;
\draw[thick, ->] (-2.2,-0.1).. controls (-2.2,-0.1) and (-2.2,-0.1) .. node[anchor=south]{}(-3.8,-0.1);
\draw[thick, ->] (-1.8,0.1).. controls (-1.8,0.1) and (-1.8,0.1) .. node[anchor=south]{}(-0.2,0.1) ;
\draw[thick, ->] (-0.2,-0.1).. controls (-0.2,-0.1) and (-0.2,-0.1) .. node[anchor=south]{}(-1.8,-0.1);
\draw[thick, ->] (0.2,0.1).. controls (0.2,0.1) and (0.2,0.1) .. node[anchor=south]{}(1.8,0.1) ;
\draw[thick, ->] (1.8,-0.1).. controls (1.8,-0.1) and (1.8,-0.1) .. node[anchor=south]{}(0.2,-0.1);
\draw[thick, ->] (2.2,0.1).. controls (2.2,0.1) and (2.2,0.1) .. node[anchor=south]{}(3.8,0.1) ;
\draw[thick, ->] (3.8,-0.1).. controls (3.8,-0.1) and (3.8,-0.1) .. node[anchor=south]{}(2.2,-0.1);
\draw[thick, ->] (4.2,0.1).. controls (4.2,0.1) and (4.2,0.1) .. node[anchor=south]{}(5.8,0.1) ;
\draw[thick, ->] (5.8,-0.1).. controls (5.8,-0.1) and (5.8,-0.1) .. node[anchor=south]{}(4.2,-0.1);
\draw[thick, ->] (-1.9,1.3).. controls (-1.9,1.3) and (-1.9,1.3) .. node[anchor=south]{}(-1.9,0.2) ;
\draw[thick, ->] (-2.1,0.2).. controls (-2.1,0.2) and (-2.1,0.2) .. node[anchor=south]{}(-2.1,1.3);
\end{tikzpicture},
\end{eqnarray}
where $g\in G(H)$ and $D_i$ are distinct $4$-dimensional simple subcoalgebras for any $i\geq 1$.
\end{proposition}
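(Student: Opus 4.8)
The plan is to extract the full multiplication table of $C_k$ in $\Bbb{Z}\mathcal{S}$ from the infinite sequence displayed above, use it together with Lemma \ref{lem:connected} to determine the vertex set $\mathcal{S}^\prime$ of $\mathrm{Q}(H_{(1)})$, show that the subcoalgebras appearing in that description are pairwise distinct, and finally read off the arrows from Lemma \ref{lemma:P^1=1^P}(4)(ii). First, substituting $S(C_k)=C_k$ into the displayed sequence and using that $C_k$ is central in $\Bbb{Z}\mathcal{S}$ (Lemma \ref{lem:center}), so that $X\cdot C_k=C_k\cdot X$ for every $X\in\mathcal{S}$, one gets
\[
\begin{aligned}
&\k1\cdot C_k=C_k,\quad \k g\cdot C_k=C_k,\quad C_k\cdot C_k=\k1+\k g+D_1,\\
&D_1\cdot C_k=C_k+D_2,\quad D_i\cdot C_k=D_{i-1}+D_{i+1}\ \ (i\ge2);
\end{aligned}
\]
in particular $S(D_2)=D_2$, in accordance with the identity $C_k\cdot D_1=C_k+S(D_2)$ obtained earlier. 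Comparing $C_k\cdot C_k=\k1+\k g+D_1$ with the expansion $C_k\cdot C_k=\k h_1+\k h_2+E$ ($h_1\ne h_2$) from the lemma above, and using that $\mathcal{S}$ is a $\Bbb{Z}_+$-basis of $\Bbb{Z}\mathcal{S}$, we get $\{h_1,h_2\}=\{1,g\}$ and $E=D_1$; in particular $g\ne1$. Since $S(C_k)=C_k$, Lemma \ref{lem:connected} says that $D\in\mathcal{S}^\prime$ if and only if $D$ occurs with nonzero coefficient in $(C_k)^N$ for some $N\ge0$. Because all the structure constants above are nonnegative there is no cancellation in these powers, and since $\{\k1,\k g,C_k\}\cup\{D_i\mid i\ge1\}$ is closed under right multiplication by $C_k$, an induction on $N$ shows that each $(C_k)^N$ lies in its $\Bbb{Z}_+$-span, while $\k1,C_k$ already occur in $(C_k)^0,(C_k)^1$, the elements $\k g,D_1$ occur in $(C_k)^2$, and $D_{i+1}$ occurs in $(C_k)^{N+1}$ whenever $D_i$ occurs in $(C_k)^N$. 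Hence $\mathcal{S}^\prime=\{\k1,\k g,C_k\}\cup\{D_i\mid i\ge1\}$.

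The crux is the pairwise distinctness of $\k1,\k g,C_k,D_1,D_2,\dots$. By dimension $D_i\notin\{\k1,\k g\}$, and $\k1\ne\k g$ since $g\ne1$. If $C_k=D_i$ for some $i\ge1$, then $C_k\cdot C_k=C_k\cdot D_i$; but $C_k\cdot C_k=\k1+\k g+D_1$ has support of size exactly $3$ (three distinct basis elements, as $g\ne1$ and $\dim_\k D_1=4$), whereas $C_k\cdot D_i$ has support of size at most $2$ by the table — a contradiction. Finally, assume the $D_i$ are not pairwise distinct and let $m\ge1$ be least with $D_m=D_j$ for some $j\ge1$, $j\ne m$; then necessarily $j>m$, so fix $n>m$ with $D_n=D_m$ (and $n\ge2$). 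Multiplying by $C_k$, the table turns $D_m\cdot C_k=D_n\cdot C_k$ into $C_k+D_2=D_{n-1}+D_{n+1}$ when $m=1$, and into $D_{m-1}+D_{m+1}=D_{n-1}+D_{n+1}$ when $m\ge2$. Reading off, in the free abelian group $\Bbb{Z}\mathcal{S}^\prime$, the coefficient of the ``index-$(m-1)$'' term on the left — namely $C_k$ if $m=1$, and $D_{m-1}$ if $m\ge2$ — that term must occur among $\{D_{n-1},D_{n+1}\}$; but $C_k$ is distinct from every $D_j$ ($j\ge1$) by the previous case, and $D_{m-1}$ (with $m-1\ge1$) is distinct from every other $D_k$ by the minimality of $m$, while $n-1,n+1\ne m-1$ — a contradiction. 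Thus all the listed subcoalgebras are distinct.

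It remains to count the arrows. Applying Lemma \ref{lemma:P^1=1^P}(4)(ii) to $H_{(1)}$ (for which $\mid{}^1\mathcal{P}\mid=1$ with unique member $C_k$), the number of arrows $X\to Y$ in $\mathrm{Q}(H_{(1)})$ equals the coefficient of $X$ in $Y\cdot C_k$. Running through all $X,Y\in\mathcal{S}^\prime$ with the table above, this coefficient equals $1$ precisely for the ordered pairs $(\k1,C_k)$, $(C_k,\k1)$, $(\k g,C_k)$, $(C_k,\k g)$, $(C_k,D_1)$, $(D_1,C_k)$, $(D_i,D_{i+1})$ and $(D_{i+1},D_i)$ with $i\ge1$, and equals $0$ otherwise. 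Combined with the distinctness just proved, this says exactly that $\mathrm{Q}(H_{(1)})$ is the quiver \eqref{quiver4}. The one genuinely delicate point in the argument is this distinctness of the infinitely many $D_i$: the three-term relation $D_i\cdot C_k=D_{i-1}+D_{i+1}$ lets one propagate any coincidence to a coincidence at a strictly smaller index, so that a minimality argument finishes it — the only care required being in the boundary cases $C_k=D_i$ and $m=1$.
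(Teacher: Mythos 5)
Your proof is correct, and it follows the same skeleton as the paper's argument: derive the multiplication table for $C_k$ from the infinite sequence (using $S(C_k)=C_k$ and the centrality from Lemma \ref{lem:center}), identify the vertex set of $\mathrm{Q}(H_{(1)})$ via Lemma \ref{lem:connected}, and read off the arrows from Lemma \ref{lemma:P^1=1^P}(4). The one place where you genuinely diverge is the crux, namely the pairwise distinctness of $\k1,\k g,C_k,D_1,D_2,\dots$. The paper sets $D_0=C_k$ and disposes of any coincidence $D_i=D_j$ in one line: since $(H_{(1)})_0$ is generated by $C_k$ (Lemma \ref{lem:connected}), such a coincidence would force $(H_{(1)})_0$ to be finite-dimensional, contradicting the standing hypothesis; this is short but leaves implicit the passage from a single coincidence to eventual periodicity of the sequence of $D_i$'s. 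Your minimal-index argument, which propagates a coincidence down the three-term recursion $D_i\cdot C_k=D_{i-1}+D_{i+1}$ to a coincidence at a strictly smaller index and handles the boundary cases $C_k=D_i$ and $m=1$ by a support count in the $\Bbb{Z}_+$-basis $\mathcal{S}$, is more elementary and entirely self-contained, at the cost of being longer. A minor difference: the paper also proves $S(D_i)=D_i$ for all $i$ by induction, whereas you only record $S(D_2)=D_2$ in passing; since $S(C_k)=C_k$ and $C_k$, $S(C_k)$ are central, that extra fact is not actually needed to determine the quiver from Lemma \ref{lemma:P^1=1^P}(4)(ii), so nothing is lost.
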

\begin{proof}
Using Lemma \ref{lem:connected}, we know that $(H_{(1)})_0$ is generated by $C_k$. If there exist some distinct $i, j\geq 0$ such that $D_i= D_j$, where $D_0=C_k$, then $(H_{(1)})_0$ is finite-dimensional. This is a contradiction. Now we claim that for any $i\geq 1$, we have $D_i=S(D_i)$. Indeed, suppose for any $i\leq n-1$, we have $D_i=S(D_i).$ When $i=n$, since $$D_{n-1}\cdot C_k=D_{n-2}+D_{n},$$
it follows that $$S(C_k)\cdot S(D_{n-1})=S(D_{n-2})+S(D_{n}).$$ According to Lemma \ref{lem:center} and the induction assumption, we know that $$D_{n-1}\cdot C_k=D_{n-2}+S(D_{n}),$$ which indicates that $D_n=S(D_n)$. By Lemma \ref{lemma:P^1=1^P} (4), we have thus proved the proposition.
\end{proof}

Finally, we consider the converse of Theorem \ref{thm:H10infinite}, that is, whether $H$ is of discrete corepresentation type if $H$ satisfies cases $(1), (2),$ or $(3)$.
\begin{remark}
Let $H$ be a non-cosemisimple Hopf algebra over $\k$ with the dual Chevalley property such that the coradical of $H_{(1)}$ is infinite-dimensional.
In case $(1)$ of Theorem \ref{thm:H10infinite}, it follows from Proposition \ref{prop:=Hinfi} that $H$ is of discrete corepresentation type. But in cases $(2)$ and $(3)$, $H$ is not necessarily a discrete corepresentation type Hopf algebra. In fact, $B^{m, n}(\lambda, s, t, k)$ is precisely the Hopf algebra corresponding to the quiver $\mathrm{Q}^{m, n}$. To ensure $B^{m, n}(\lambda, s, t, k)$ is of discrete corepresentation type, the authors in \cite{ISSZ24} imposed some restrictions on these parameters (see, for example, \cite[Theorem 5.15]{ISSZ24}). In other words, counterexamples exist in case $(2)$; $B^{m, m}(\lambda, s, t, k)$ being one such example, where $m\neq0.$ Besides, we will provide an example of non-discrete corepresentation type Hopf algebra satisfying case $(3)$ in the next section (see Remark \ref{rm:counterexample} below).
\end{remark}

\section{Example}\label{section4}
In this section, we will show that the situation (3) of Theorem \ref{thm:H10infinite} does occur. That is, we introduce a new algebra $H(e_{\pm 1}, f_{\pm 1}, u, v)$ and show that this algebra is a Hopf algebra with the dual Chevalley property of discrete corepresentation type such that its link quiver is of the form (\ref{quiver4}) in Proposition \ref{coro:C=SC}.
\begin{definition}\label{def:Hefuv}
As an algebra, $H(e_{\pm 1}, f_{\pm 1}, u, v)$ is generated by $u, v, e_{i}, f_{i}$ for $i\in\Bbb{Z}$, subject to the following relations
\begin{eqnarray*}
1=e_0+f_0, \;\;e_ie_j=e_{i+j},\;\;f_if_j=f_{i+j},\;\;e_if_j=f_je_i=0,
\end{eqnarray*}
\begin{eqnarray*}
e_iu=(-1)^iue_i,\;\;f_iu=(-1)^iuf_i,\;\;e_iv=(-1)^ive_i,\;\;f_iv=(-1)^ivf_i,
\end{eqnarray*}
\begin{eqnarray*}
u^2=v^2=0,\;\; uv=-vu,
\end{eqnarray*}
for any $i, j\in\Bbb{Z}$.

The comultiplication, counit and the antipode are given by
\begin{eqnarray*}
\Delta(e_i)=e_i\otimes e_i+f_i\otimes f_{-i},\;\;\varepsilon(e_i)=1,\;\;S(e_i)=e_{-i},
\end{eqnarray*}
\begin{eqnarray*}
\Delta(f_i)=e_i\otimes f_i+f_i\otimes e_{-i},\;\;\varepsilon(f_i)=0,\;\;S(f_i)=f_{i},
\end{eqnarray*}
\begin{eqnarray*}
\Delta(u)=1\otimes u+u\otimes e_1+v\otimes f_{-1},\;\;\varepsilon(u)=0,\;\;S(u)=-vf_{-1}-ue_{-1},
\end{eqnarray*}
\begin{eqnarray*}
\Delta(v)=1\otimes v+u\otimes f_1+v\otimes e_{-1},\;\;\varepsilon(v)=0,\;\;S(v)=-uf_{1}-ve_{1},
\end{eqnarray*}
for any $i\in\Bbb{Z}$.
\end{definition}
With operations defined above, we have
\begin{lemma}
 $H(e_{\pm 1}, f_{\pm 1}, u, v)$ is a Hopf algebra with the dual Chevalley property. Moreover, $H(e_{\pm1},f_{\pm 1},u,v)_0$ is spanned by $e_i,f_i, i \in \mathbb{Z}$ and $H(e_{\pm1},f_{\pm 1},u,v)=H(e_{\pm1},f_{\pm 1},u,v)_2$.
\end{lemma}
\begin{proof}
The proof is routine. For completeness and the convenience for other reader, we give the proof here. As usual, we decompose the proof into several steps.\\
$\bullet$ Step $1$ ($\Delta$ and $\varepsilon$ are algebra homomorphisms.) \\
First of all, it is clear that $\varepsilon$ is an algebra homomorphism. By definition, we have
\begin{eqnarray*}
\Delta(e_0+f_0)&=&e_0\otimes e_0+f_0\otimes f_{0}+e_0\otimes f_0+f_0\otimes e_{0}\\
&=&(e_0+f_0)\otimes(e_0+f_0)\\
&=&\Delta(1),
\end{eqnarray*}
and for any $i\in\Bbb{Z}$,
\begin{eqnarray*}
\Delta(e_i)\Delta(e_j)&=&(e_i\otimes e_i+f_i\otimes f_{-i})(e_j\otimes e_j+f_j\otimes f_{-j})\\
&=&e_{i+j}\otimes e_{i+j}+f_{i+j}\otimes f_{-i-j}\\
&=&\Delta(e_{i+j}),\\
\Delta(f_i)\Delta(f_j)&=&(e_i\otimes f_i+f_i\otimes e_{-i})(e_j\otimes f_j+f_j\otimes e_{-j})\\
&=&e_{i+j}\otimes f_{i+j}+f_{i+j}\otimes e_{-i-j}\\
&=&\Delta(f_{i+j}),\\
\Delta(e_i)\Delta(f_j)&=&(e_i\otimes e_i+f_i\otimes f_{-i})(e_j\otimes f_j+f_j\otimes e_{-j})\\
&=&0,\\
\Delta(f_j)\Delta(e_i)&=&(e_j\otimes f_j+f_j\otimes e_{-j})(e_i\otimes e_i+f_i\otimes f_{-i})\\
&=&0.
\end{eqnarray*}
Meanwhile, for any $i\in\Bbb{Z}$,
\begin{eqnarray*}
\Delta(e_i)\Delta(u)&=&(e_i\otimes e_i+f_i\otimes f_{-i})(1\otimes u+u\otimes e_1+v\otimes f_{-1})\\
&=&e_i\otimes e_i u+f_i\otimes f_{-i}u+e_i u\otimes e_{i+1}+f_{i}v\otimes f_{-i-1}\\
&=&(-1)^i(e_i\otimes u e_i +f_i\otimes uf_{-i}+ue_i \otimes e_{i+1}+vf_{i}\otimes f_{-i-1})\\
&=&(-1)^i\Delta(u)\Delta(e_i),
\end{eqnarray*}
and
\begin{eqnarray*}
\Delta(f_i)\Delta(u)&=&(e_i\otimes f_i+f_i\otimes e_{-i})(1\otimes u+u\otimes e_1+v\otimes f_{-1})\\
&=&e_i\otimes f_i u+f_{i}\otimes e_{-i}u+f_iu\otimes e_{-i+1}+e_iv\otimes f_{i-1}\\
&=&(-1)^i(e_i\otimes u f_i +f_{i}\otimes u e_{-i}+uf_i\otimes e_{-i+1}+ve_i\otimes f_{i-1})\\
&=&(-1)^i\Delta(u)\Delta(f_i).
\end{eqnarray*}
Using a similar argument, one can get $\Delta(e_i)\Delta(v)=(-1)^i\Delta(v)\Delta(e_i)$ and $\Delta(f_i)\Delta(v)=(-1)^i\Delta(v)\Delta(f_i).$
Moreover, we find that
\begin{eqnarray*}
\Delta(u)\Delta(v)&=&(1\otimes u+u\otimes e_1+v\otimes f_{-1})(1\otimes v+u\otimes f_1+v\otimes e_{-1})\\
&=&1\otimes uv+u\otimes uf_1+v\otimes u e_{-1}+u\otimes e_1 v+uv\otimes e_0+v\otimes f_{-1}v+vu\otimes f_0
\end{eqnarray*}
and
\begin{eqnarray*}
\Delta(v)\Delta(u)&=&(1\otimes v+u\otimes f_1+v\otimes e_{-1})(1\otimes u+u\otimes e_1+v\otimes f_{-1})\\
&=&1\otimes vu+u\otimes ve_1+ v\otimes vf_{-1}+u\otimes f_1u+uv\otimes f_0+v\otimes e_{-1}u+vu\otimes e_{0}.
\end{eqnarray*}
It follows that $\Delta(u)\Delta(v)=-\Delta(v)\Delta(u)$.
Through direct calculation, we have
\begin{eqnarray*}
\Delta(u)\Delta(u)&=&(1\otimes u+u\otimes e_1+v\otimes f_{-1})(1\otimes u+u\otimes e_1+v\otimes f_{-1})\\
&=&u\otimes ue_1+v\otimes uf_{-1}+u\otimes e_{1}u+v\otimes f_{-1}u\\
&=&0
\end{eqnarray*}
and
\begin{eqnarray*}
\Delta(v)\Delta(v)&=&(1\otimes v+u\otimes f_1+v\otimes e_{-1})(1\otimes v+u\otimes f_1+v\otimes e_{-1})\\
&=&u\otimes vf_1+v\otimes ve_{-1}+u\otimes f_1 v+v\otimes e_{-1}v\\
&=&0.
\end{eqnarray*}\\
$\bullet$ Step $2$ (Coassociative and counit.) \\
  Indeed, for any $i\in\Bbb{Z}$,
  \begin{eqnarray*}
  (\Delta\otimes \id)\Delta(e_i)&=&(\Delta\otimes \id)(e_i\otimes e_i+f_i\otimes f_{-i})\\
  &=&e_i\otimes e_i\otimes e_i+f_i\otimes f_{-i}\otimes e_i+e_i\otimes f_i\otimes f_{-i}+f_i\otimes e_{-i}\otimes f_{-i}
  \end{eqnarray*}
  and
  \begin{eqnarray*}
  (\id\otimes \Delta)\Delta(e_i)&=&(\id\otimes \Delta)(e_i\otimes e_i+f_i\otimes f_{-i})\\
  &=&e_i\otimes e_i\otimes e_i+e_i\otimes f_i\otimes f_{-i}+f_i\otimes e_{-i}\otimes f_{-i}+f_i\otimes f_{-i}\otimes e_i.
  \end{eqnarray*}
  It is not hard to see that they are the same and thus $(\Delta\otimes \id)\Delta(e_i)=(\id\otimes \Delta)\Delta(e_i).$
  Similarly, one can show that both $(\Delta\otimes \id)\Delta(f_i)$ and $(\id\otimes \Delta)\Delta(f_i)$ equal to
  $$e_i\otimes e_i\otimes f_i+f_i\otimes f_{-i}\otimes f_i+e_i\otimes f_i\otimes e_{-i}+f_i\otimes e_{-i}\otimes e_{-i},$$
  for any $i\in\Bbb{Z}$.
  Moreover, a simple computation shows that
  \begin{eqnarray*}
  (\Delta\otimes \id)\Delta(u)&=&(\id\otimes \Delta)\Delta(u)\\
  &=&(1\otimes 1\otimes u+1\otimes u\otimes e_1+u\otimes e_1\otimes e_1+v\otimes f_{-1}\otimes e_1\\
  &&+1\otimes v\otimes f_{-1}+u\otimes f_1\otimes f_{-1}+v\otimes e_{-1}\otimes f_{-1})
   \end{eqnarray*}
   and
  \begin{eqnarray*}
 (\Delta\otimes \id)\Delta(v)&=& (\id\otimes \Delta)\Delta(v)\\
  &=&(1\otimes 1\otimes v+1\otimes u\otimes f_1+u\otimes e_1\otimes f_1+v\otimes f_{-1}\otimes f_1\\
  &&+1\otimes v\otimes e_{-1}+u\otimes f_1\otimes e_{-1}+v\otimes e_{-1}\otimes e_{-1}).
   \end{eqnarray*}
  The verification of the axiom for counit is easy and it is omitted.\\
  $\bullet$ Step $3$ (Antipode is an algebra anti-homomorphism.)\\
  It is clear that
  \begin{eqnarray*}
  S(e_0+f_0)=e_0+f_0=1,
    \end{eqnarray*}
     \begin{eqnarray*}
  S(e_j)S(e_i)=e_{-j}e_{-i}=e_{-i-j}=S(e_{i+j}),\;\;S(f_j)S(f_i)=f_jf_i=f_{i+j}=S(f_{i+j}),
  \end{eqnarray*}
  \begin{eqnarray*}
  S(f_j)S(e_i)=f_je_{-i}=0,\;\;S(e_i)S(f_j)=e_{-i}f_{j}=0.
   \end{eqnarray*}
   We also have
   \begin{eqnarray*}
   S(u)S(e_i)=(-vf_{-1}-ue_{-1})e_{-i}=-ue_{-i-1}
    \end{eqnarray*}
    and
    \begin{eqnarray*}
    S(e_i)S(u)=e_{-i}(-vf_{-1}-ue_{-1})=(-1)^{i+1}ue_{-i-1},
    \end{eqnarray*}
    which follows that $S(u)S(e_i)=(-1)^iS(e_i)S(u).$ Besides,
    \begin{eqnarray*}
    &&S(u)S(f_i)=-vf_{i-1}=(-1)^iS(f_i)S(u), \\
    &&S(v)S(e_i)=-ve_{1-i}=(-1)^iS(e_i)S(v),\\
    &&S(v)S(f_i)=-uf_{i+1}=(-1)^iS(f_i)S(v),\\
    &&S(u)S(u)=(-vf_{-1}-ue_{-1})(-vf_{-1}-ue_{-1})=0,\\
   && S(v)S(v)=(-uf_{1}-ve_{1})(-uf_{1}-ve_{1})=0,\\
   &&S(v)S(u)=-uvf_0-vue_0=-S(u)S(v).
    \end{eqnarray*}
  $\bullet$ Step $4$ (The axiom for antipode.) \\
  By definition, we have
  \begin{eqnarray*}
  &&e_iS(e_i)+f_iS(f_{-i})=e_0+f_0=1=\varepsilon(e_i),\\
  &&S(e_i)e_i+s(f_i)f_{-i}=e_0+f_0=1=\varepsilon(e_i),\\
  &&e_iS(f_i)+f_iS(e_{-i})=0=\varepsilon(f_i),\\
  &&S(e_i)f_i+S(f_i)e_{-i}=0=\varepsilon(f_i),\\
  &&S(u)+uS(e_1)+vS(f_{-1})=0=\varepsilon(u),\\
  &&u+S(u)e_1+S(v)f_{-1}=0=\varepsilon(u),\\
  &&S(v)+uS(f_1)+vS(e_{-1})=0=\varepsilon(v),\\
  &&v+S(u)f_1+S(v)e_{-1}=0=\varepsilon(v).
  \end{eqnarray*}
  By steps $1, 2 ,3, 4$,  $H(e_{\pm 1}, f_{\pm 1}, u, v)$ is a Hopf algebra.

  Denote $H(e_{\pm1},f_{\pm 1})=\operatorname{span}\{e_i, f_i\mid i\in \mathbb{Z}\}$. We know that $$H(e_{\pm1},f_{\pm 1})=\Bbbk1\oplus\Bbbk g\oplus(\bigoplus_{i\geq 1} \span\{e_i, f_i, e_{-i}, f_{-i}\}),$$
where $g=e_0-f_0$. This means that $H(e_{\pm1},f_{\pm 1})$ is a cosemisimple Hopf subalgebra of $H(e_{\pm1},f_{\pm 1}, u, v).$
Denote $$H(e_{\pm1},f_{\pm 1},u,v)(0)=H(e_{\pm1},f_{\pm 1}),$$ $$H(e_{\pm1},f_{\pm 1}, u,v)(1)= H(e_{\pm1},f_{\pm 1})u\oplus H(e_{\pm1},f_{\pm 1})v$$ and $$H(e_{\pm1},f_{\pm 1}, u,v)(2)=H(e_{\pm1},f_{\pm 1})uv.$$
We have $$H(e_{\pm1},f_{\pm 1},u,v)=H(e_{\pm1},f_{\pm 1}, u, v)(0)\oplus H(e_{\pm1},f_{\pm 1}, u, v)(1)\oplus H(e_{\pm1},f_{\pm 1}, u, v)(2).$$
It is straightforward to show that
$H(e_{\pm1},f_{\pm 1},u,v)$ is a graded algebra with the grading defined as above and $S(H(e_{\pm1},f_{\pm 1},u,v)(j))\subseteq H(e_{\pm1},f_{\pm 1},u,v)(j)$ for $j=0, 1, 2$.
Note that
\begin{eqnarray*}
    \Delta (u)&=&1\otimes u+u\otimes e_1+ v\otimes f_{-1}\\
&\in& H(e_{\pm1},f_{\pm 1}, u, v)(0)\otimes H(e_{\pm1},f_{\pm 1}, u, v)(1)\\
&&+H(e_{\pm1},f_{\pm 1}, u, v)(1)\otimes H(e_{\pm1},f_{\pm 1}, u, v)(0).
\end{eqnarray*}
It follows that
\begin{eqnarray*}
\Delta(H(e_{\pm1},f_{\pm 1})u)&=& \Delta(H(e_{\pm1},f_{\pm 1}))\Delta(u)\\
&\subseteq& H(e_{\pm1},f_{\pm 1}, u, v)(0)\otimes H(e_{\pm1},f_{\pm 1}, u, v)(1)\\
&&+H(e_{\pm1},f_{\pm 1}, u, v)(1)\otimes H(e_{\pm1},f_{\pm 1}, u, v)(0).
\end{eqnarray*}
A similar argument shows that
\begin{eqnarray*}
\Delta(H(e_{\pm1},f_{\pm 1})v)
&\subseteq& H(e_{\pm1},f_{\pm 1}, u, v)(0)\otimes H(e_{\pm1},f_{\pm 1}, u, v)(1)\\
&&+H(e_{\pm1},f_{\pm 1}, u, v)(1)\otimes H(e_{\pm1},f_{\pm 1}, u, v)(0).
\end{eqnarray*}
Moreover,
\begin{eqnarray*}
\Delta(H(e_{\pm1},f_{\pm 1})uv)&=&\Delta(H(e_{\pm1},f_{\pm 1}))\Delta(u)\Delta(v)\\
&\subseteq& H(e_{\pm1},f_{\pm 1})(0)\otimes H(e_{\pm1},f_{\pm 1})(2)\\&+& H(e_{\pm1},f_{\pm 1})(1)\otimes H(e_{\pm1},f_{\pm 1})(1)\\&+&H(e_{\pm1},f_{\pm 1})(2)\otimes H(e_{\pm1},f_{\pm 1})(0).
\end{eqnarray*}
Thus we can show that $H(e_{\pm1},f_{\pm 1},u,v)$ is a graded Hopf algebra.
Moreover, we have $$H(e_{\pm1},f_{\pm 1}, u,v)(0)\subseteq \bigoplus\limits_{j=0}^1 H(e_{\pm1},f_{\pm 1}, u,v)(j)\subseteq \bigoplus\limits_{j=0}^2H(e_{\pm1},f_{\pm 1},u,v)(j)$$
is a Hopf algebra filtration. It follows from \cite[Proposition 11.1.1]{Swe69} that $H(e_{\pm1},f_{\pm 1},u,v)_0\subseteq H(e_{\pm1},f_{\pm 1},u,v)(0).$ By the fact that $H(e_{\pm1},f_{\pm 1})$ is a cosemisimple Hopf algebra, we know that $H(e_{\pm1},f_{\pm 1},u,v)_0=H(e_{\pm1},f_{\pm 1})$ and $H(e_{\pm1},f_{\pm 1},u,v)=H(e_{\pm1},f_{\pm 1},u,v)_2$. This means that $H(e_{\pm1},f_{\pm 1},u,v)$ has the dual Chevalley property.
\end{proof}
In the following part, denote $g=e_0-f_0$, it is clear that $g$ is a group-like element of order $2$. For any $i\geq 1$, denote
$C_i=\span\{e_i, f_i, e_{-i}, f_{-i}\}$. We can show that each $C_i$ is a simple subcoalgebra with basic multiplicative matrix $\C_i$, where
$$
\C_i=\left(\begin{array}{cc}
e_i&f_i\\
f_{-i}&e_{-i}
 \end{array}\right).
$$
It should be pointed out that $C_i=S(C_i)$ for any $i\geq 1.$
We can obtain $$H(e_{\pm 1}, f_{\pm 1}, u, v)_0=\k1\oplus \k g\oplus \bigoplus_{i\geq 1}C_i,$$ which follows that $\mathcal{S}=\{\k1, \k g\}\cup\{ C_i\mid  i\geq 1\}$.

From the definition, it follows automatically that
$\left(\begin{array}{cc}
u&v
 \end{array}\right)$ is a non-trivial $(1, \C_1)$-primitive matrix.
Besides, we have $$C_1\cdot C_1=\k 1+\k g+C_2$$ and $$C_i\cdot C_1=C_{i+1}+C_{i-1}\;\;\text{for }i\geq2 $$ in $\Bbb{Z}\mathcal{S}.$
We can show that
\begin{eqnarray*}
{}^{1}\mathcal{P}^{\C_1}&=&
\{\left(\begin{array}{cc}
u&v
 \end{array}\right)\},\\
 {}^{g}\mathcal{P}^{\C_1}&=&
 \{\left(\begin{array}{cc}
-gu&gv
 \end{array}\right)\},\\
{}^{\C_1} \mathcal{P}^{1}&=&
\{\left(\begin{array}{cc}
-uf_1-ve_1\\
-vf_{-1}-ue_{-1}
 \end{array}\right)\},\\
 {}^{\C_1} \mathcal{P}^{g}&=&
\{\left(\begin{array}{cc}
uf_1-ve_1\\
ue_{-1}-vf_{-1}
 \end{array}\right)\}.
\end{eqnarray*}
For any $i\geq 1$, we know that
\begin{eqnarray*}
 {}^{\C_i} \mathcal{P}^{\C_{i+1}}&=&
\{\left(\begin{array}{cc}
e_iu& f_i v\\
f_{-i}u&e_{-i}v
 \end{array}\right)\},\\
 {}^{\C_{i+1}} \mathcal{P}^{\C_{i}}&=&
\{\left(\begin{array}{cc}
e_{i+1}v& f_{i+1} u\\
f_{-i-1}v&e_{-i-1}u
 \end{array}\right)\}.
\end{eqnarray*}
By the construction of $\mathcal{P}$ in subsection \ref{subsection1.2}, it turns out that $$\mathcal{P}={}^{1}\mathcal{P}^{\C_1}\cup{}^{g}\mathcal{P}^{\C_1}\cup{}^{\C_1} \mathcal{P}^{1}\cup{}^{\C_1} \mathcal{P}^{g}\cup(\bigcup_{i\geq 1}{}^{\C_i} \mathcal{P}^{\C_{i+1}})\cup(\bigcup_{i\geq 1}{}^{\C_{i+1}} \mathcal{P}^{\C_{i}}).$$

As a consequence, the link quiver of $H(e_{\pm 1}, f_{\pm 1}, u, v)$ is of the following form:
\begin{eqnarray*}
\begin{tikzpicture}
\filldraw [black] (4,0) circle (0.5pt) node[anchor=north]{$C_4$};
\filldraw [black] (-4,0) circle (0.5pt)node[anchor=north]{$\k1$};
\filldraw [black] (2,0) circle (0.5pt) node[anchor=north]{$C_3$};
\filldraw [black] (0,0) circle (0.5pt) node[anchor=north]{$C_2$};
\filldraw [black] (-2,0) circle (0.5pt)node[anchor=north]{$C_1$};
\filldraw [black] (6,0) circle (0.5pt) node[anchor=west]{$\cdots$};
\filldraw [black] (-2,1.4) circle (0.5pt) node[anchor=south]{$\k g$};
\draw[thick, ->] (-3.8,0.1).. controls (-3.8,0.1) and (-3.8,0.1) .. node[anchor=south]{}(-2.2,0.1) ;
\draw[thick, ->] (-2.2,-0.1).. controls (-2.2,-0.1) and (-2.2,-0.1) .. node[anchor=south]{}(-3.8,-0.1);
\draw[thick, ->] (-1.8,0.1).. controls (-1.8,0.1) and (-1.8,0.1) .. node[anchor=south]{}(-0.2,0.1) ;
\draw[thick, ->] (-0.2,-0.1).. controls (-0.2,-0.1) and (-0.2,-0.1) .. node[anchor=south]{}(-1.8,-0.1);
\draw[thick, ->] (0.2,0.1).. controls (0.2,0.1) and (0.2,0.1) .. node[anchor=south]{}(1.8,0.1) ;
\draw[thick, ->] (1.8,-0.1).. controls (1.8,-0.1) and (1.8,-0.1) .. node[anchor=south]{}(0.2,-0.1);
\draw[thick, ->] (2.2,0.1).. controls (2.2,0.1) and (2.2,0.1) .. node[anchor=south]{}(3.8,0.1) ;
\draw[thick, ->] (3.8,-0.1).. controls (3.8,-0.1) and (3.8,-0.1) .. node[anchor=south]{}(2.2,-0.1);
\draw[thick, ->] (4.2,0.1).. controls (4.2,0.1) and (4.2,0.1) .. node[anchor=south]{}(5.8,0.1) ;
\draw[thick, ->] (5.8,-0.1).. controls (5.8,-0.1) and (5.8,-0.1) .. node[anchor=south]{}(4.2,-0.1);
\draw[thick, ->] (-1.9,1.3).. controls (-1.9,1.3) and (-1.9,1.3) .. node[anchor=south]{}(-1.9,0.2) ;
\draw[thick, ->] (-2.1,0.2).. controls (-2.1,0.2) and (-2.1,0.2) .. node[anchor=south]{}(-2.1,1.3);
\end{tikzpicture}.
\end{eqnarray*}
%Let $\Y\in \mathcal{P}$ be a non-trivial $(\C, \D)$-primitive matrix for some $C, D\in\mathcal{S}$. Suppose $P\Y Q$ is also a $(\C, \D)$-primitive matrix, where $P$ and $Q$ are invertible matrices over $\k$. Then according to \cite[Corollary 2.16]{YLL23}, $P\Y Q=\alpha \Y$ for some $\alpha\in\k$.

We now turn to the category of finite-dimensional right comodules over $H(e_{\pm 1}, f_{\pm 1}, u, v)$. For convenience, for any matrix $\A:=(a_{ij})_{m\times n}$ and $\B:=(b_{ij})_{n\times l}$ over $H(e_{\pm 1}, f_{\pm 1}, u, v)$, denote the following matrix
    $$\A\;\widetilde{\otimes}\;\B:=\left(\sum\limits_{k=1}^n a_{ik}\otimes b_{kl}\right)_{m\times l}.$$

Let $C, D, E, F\in\mathcal{S}$ with $\dim_{\Bbbk}(C)=r^2, \dim_{\Bbbk}(D)=s^2, \dim_{\Bbbk}(E)=t^2, \dim_{\Bbbk}(F)=u^2$ respectively, where $r, s, t, u\in\{1,2\}$. Suppose $\X_{r\times s}=\left(x_{ij}\right)_{r\times s}\in\mathcal{P}$ is a non-trivial $(\C, \D)$-primitive matrix, $\Y_{r\times t}=\left(y_{ij}\right)_{r\times t}\in\mathcal{P}$ is a non-trivial $(\C, \E)$-primitive matrix and $\Z_{u\times s}=\left(z_{ij}\right)_{u\times s}\in\mathcal{P}$ is a non-trivial $(\F, \D)$-primitive matrix.

It is clear that $$S=\span\{c_{11}, c_{12}, \cdots, c_{1r}\}$$ is a simple right $H(e_{\pm 1}, f_{\pm 1}, u, v)$-comodule with
$$
\rho(S)=S\;\widetilde{\otimes} \;\C.
$$

Moreover,
$$U=\span\{c_{11}, c_{12}, \cdots, c_{1r}, x_{11}, x_{12}, \cdots, x_{1s}\}$$ is a indecomposable right $H(e_{\pm 1}, f_{\pm 1}, u, v)$-comodule with
$$
\rho(U)=U\;\widetilde{\otimes} \;
\left(\begin{array}{cc}
\C & \X\\
0&  \D
 \end{array}\right).
$$

We also have $$V=\span\{c_{11}, c_{12}, \cdots, c_{1r}, x_{11}, x_{12}, \cdots, x_{1s}, y_{11}, y_{12}, \cdots, y_{1t}\}$$is a indecomposable right $H(e_{\pm 1}, f_{\pm 1}, u, v)$-comodule with
$$
\rho(V)=V\;\widetilde{\otimes}\;
\left(\begin{array}{ccc}
\C & \X&\Y\\
&  \D\\
&&\E
 \end{array}\right).
$$

For any non-zero $k\in\k$, denote$$W(k)=\span\{c_{11}, c_{12}, \cdots, c_{1r}, kf_{11}, kf_{12}, \cdots, kf_{1u}, x_{11}+kz_{11}, x_{12}+kz_{12}, \cdots, x_{1s}+kz_{1s}\}.$$
One can easily show that $W(k)$ is a indecomposable right $H(e_{\pm 1}, f_{\pm 1}, u, v)$-comodule with
$$
\rho(W(k))=W(k)\;\widetilde{\otimes}\;
\left(\begin{array}{ccc}
\C & &\X\\
&  \F&\Z\\
&&\D
 \end{array}\right).
$$
Besides, we know that $W(k)\cong W(l)$ as right $H(e_{\pm 1}, f_{\pm 1}, u, v)$-comodule for any non-zero $k, l\in\k.$
From this discussion, we get some indecomposable right $H(e_{\pm 1}, f_{\pm 1}, u, v)$-comodules of small dimension.

In order to show that $H(e_{\pm 1}, f_{\pm 1}, u, v)$ is of discrete corepresentation type, we consider a special case of subcoalgebras of $H(e_{\pm 1}, f_{\pm 1}, u, v)$.
For $N\geq 1$, let $$H^N(e_{\pm 1}, f_{\pm 1}, u, v)=\span\{e_i, f_i, e_iu, e_i v, f_iu, f_i v, e_i uv, f_i uv\mid -N\leq i\leq N\}\oplus C_{N+1}.$$ We have the following lemma.
\begin{lemma}\label{lem:HNfinite}
$H^N(e_{\pm 1}, f_{\pm 1}, u, v)$ is of finite corepresentation type.
\end{lemma}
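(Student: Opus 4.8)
The plan is to dualize and reduce to a statement about a finite-dimensional algebra whose Gabriel quiver and relations can be made fully explicit. Since $H^N(e_{\pm 1}, f_{\pm 1}, u, v)$ is finite-dimensional, the category of finite-dimensional right $H^N(e_{\pm 1}, f_{\pm 1}, u, v)$-comodules is equivalent to that of finite-dimensional left $A$-modules, where $A:=H^N(e_{\pm 1}, f_{\pm 1}, u, v)^{*}$; thus it suffices to show that $A$ is of finite representation type. Let $B$ be the basic algebra Morita equivalent to $A$. By \cite[Theorem 2.1 and Corollary 4.4]{CHZ06} the Ext quiver of $B$ is the link quiver $\mathrm{Q}(H^N(e_{\pm 1}, f_{\pm 1}, u, v))$, so $B\cong\k\mathrm{Q}/I$ for an admissible ideal $I$ with $\mathrm{Q}=\mathrm{Q}(H^N(e_{\pm 1}, f_{\pm 1}, u, v))$.

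First I would determine $\mathrm{Q}$. Keeping from the complete families of non-trivial primitive matrices only those whose entries lie in $H^N(e_{\pm 1}, f_{\pm 1}, u, v)$, one finds that $\mathrm{Q}$ has vertices $\k1,\k g,C_1,\dots,C_{N+1}$, a pair of oppositely oriented arrows between $\k1$ and $C_1$, between $\k g$ and $C_1$, and between $C_i$ and $C_{i+1}$ for $1\le i\le N-1$, together with a single arrow $C_{N+1}\to C_N$ (the would-be arrow $C_N\to C_{N+1}$ is absent because the entries $e_{N+1}v,\dots$ of the relevant primitive matrix do not lie in $H^N(e_{\pm 1}, f_{\pm 1}, u, v)$). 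In particular $\mathrm{Q}$ is finite; moreover a direct inspection shows that the underlying graph of its separated quiver $\mathrm{Q}_s$ is a disjoint union of Dynkin diagrams --- the two strands of the doubled $\mathcal{A}$-chain yield two components, each of type $D$ with the fork formed by the pendant vertices $\k1,\k g$ at $C_1$ (of type $A$ when $N=1$), and $C_{N+1}$ contributes an isolated vertex. This is consistent with Lemma \ref{lem:Dynkin}; however, since the coradical filtration of $H^N(e_{\pm 1}, f_{\pm 1}, u, v)$ has length two we have $\operatorname{rad}(A)^3=0$ but $\operatorname{rad}(A)^2\neq0$, so neither the radical-square-zero criterion used in Lemma \ref{lem:Dynkin} nor the Nakayama criterion of Theorem \ref{Thm:H10finite} applies, and one really has to use the relations.

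Next I would compute $I$. From $(H^N(e_{\pm 1}, f_{\pm 1}, u, v))_2=H^N(e_{\pm 1}, f_{\pm 1}, u, v)$ it follows that $I$ contains all paths of length $\ge3$; the remaining quadratic relations are extracted from the comultiplication restricted to the degree-one space $\span\{e_iu,f_iu,e_iv,f_iv\mid|i|\le N\}$ and the degree-two space $\span\{e_iuv,f_iuv\mid|i|\le N\}$, which encode exactly which composites of two consecutive arrows vanish and which length-two paths are proportional; the identities $u^2=v^2=0$, $uv=-vu$ force most such composites to be zero (for instance $\Delta(uv)$ witnesses only one of the three possible composites at $C_1$ on each side). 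Having $B$ explicitly, I expect it to behave like a string algebra --- each of the three arrows out of $C_1$ composing nontrivially with at most one arrow on each side --- so that the indecomposable $B$-modules are precisely the string modules built along intervals of $\mathrm{Q}$; since $\mathrm{Q}$ is finite and $\operatorname{rad}^3=0$ there are only finitely many of them, hence $A$ is of finite representation type. If the string picture does not close up cleanly because of the valency-six vertex $C_1$, one certifies representation-finiteness of $B$ directly from the explicit presentation $B\cong\k\mathrm{Q}/I$.

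As an independent verification --- and the computation that is anyway needed for the classification of $H_{(1)}$ --- I would directly enumerate the indecomposable right $H^N(e_{\pm 1}, f_{\pm 1}, u, v)$-comodules by extending the modules $S$, $U$, $V$, $W(k)$ constructed above: each indecomposable is supported on a connected interval of $\mathrm{Q}$, the families $W(k)$ collapse to a single module up to isomorphism, and using the quadratic relations one checks there are finitely many such modules in total. The main obstacle throughout is the third step: pinning down the precise quadratic relations from $\Delta$ on the degree-one and degree-two parts, and then certifying representation-finiteness of the resulting $B$, whose link quiver carries a vertex of valency six, so that none of the off-the-shelf criteria (Nakayama, special biserial, or radical-square-zero with Dynkin separated quiver) applies verbatim.
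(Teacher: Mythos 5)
Your proposal is a framework rather than a proof: the decisive step is missing. You correctly dualize to $A=H^N(e_{\pm 1}, f_{\pm 1}, u, v)^*$, correctly identify the link quiver $\mathrm{Q}$ (including the asymmetry at $C_{N+1}$, which only emits an arrow to $C_N$), and correctly observe that neither the radical-square-zero/Dynkin criterion of Lemma \ref{lem:Dynkin} nor the Nakayama criterion applies, since $\operatorname{rad}(A)^2\neq 0$ and $C_1$ supports three incoming and three outgoing arrows. But then the argument stops: the vertex $C_1$ of valency six means $B=\k\mathrm{Q}/I$ cannot be special biserial, so the string-module classification you ``expect'' is simply unavailable, and your fallback --- ``one certifies representation-finiteness of $B$ directly from the explicit presentation'' --- is exactly the assertion to be proved and is nowhere supplied. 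The relations $I$ are likewise only described in outline. You candidly flag this as ``the main obstacle,'' which is an accurate self-assessment: without it there is no proof.

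The paper avoids the bound-quiver machinery entirely and argues by a direct enumeration of indecomposable comodules stratified by Loewy length. Since $H(e_{\pm 1}, f_{\pm 1}, u, v)=H(e_{\pm 1}, f_{\pm 1}, u, v)_2$, every comodule over $H^N(e_{\pm 1}, f_{\pm 1}, u, v)$ has Loewy length at most $3$; the simples handle length $1$; for length $2$ one exhibits an explicit finite-dimensional comodule $M$ (a sum of pieces such as $\span\{1,u,v\}$, $\span\{e_i,f_i,e_iu,f_iv\}$, $\span\{e_i,f_i,e_{i-2},f_{i-2},e_iv+e_{i-2}u,f_iu+f_{i-2}v\}$, \dots) and shows every length-$2$ indecomposable embeds in it; for length $3$ one shows every indecomposable is isomorphic to one of the explicit $4$- or $8$-dimensional comodules $V_0=\span\{1,u,v,uv\}$, $V_1$, $V_{i+1}=\span\{e_i,f_i,e_iu,f_iv,e_iv,f_iu,e_iuv,f_iuv\}$. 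This is close in spirit to the ``independent verification'' you sketch in your last paragraph, but there too you leave the actual check (``one checks there are finitely many such modules in total'') to the reader. To repair your write-up along the lines you propose, you would either have to compute $I$ explicitly and verify finiteness of $\k\mathrm{Q}/I$ by hand (e.g.\ via covering theory or a case analysis at $C_1$), or carry out the comodule enumeration that the paper performs.
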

\begin{proof}
Since $H(e_{\pm 1}, f_{\pm 1}, u, v)=H(e_{\pm 1}, f_{\pm 1}, u, v)_2,$ it follows that $H^N(e_{\pm 1}, f_{\pm 1}, u, v)$ only contains comodules with Loewy length at most 3. Note that indecomposable comodules with Loewy length $1$ are simple comodules. This means that the number of non-isomorphic indecomposable comodules with Loewy length $1$ is finite.\\
Denote
\begin{eqnarray*}
M&=&\span\{1, u, v\}+\span\{g, -gu, gv\}+\span\{e_1, f_1, -uf_1-ve_1\}+\span\{e_1, f_1, uf_1-ve_1\}\\
&&+(\sum_{1\leq i\leq N}\span\{e_i, f_i, e_iu, f_i v\})+(\sum_{1\leq i\leq N}\span\{e_{i}, f_{i}, e_{i}v, f_{i}u\})\\
&&+(\sum_{3\leq i\leq N-1}\span\{e_i, f_i, e_{i-2}, f_{i-2}, e_i v+e_{i-2}u, f_i u+ f_{i-2}v\}).
\end{eqnarray*}
Up to isomorphism, any indecomposable comodule $N$ with Loewy length $2$ is a subcomodule of $M$
such that the link-quiver of coefficient coalgebra $\operatorname{cf}(N)$ is connected. That is, $\operatorname{cf}(N)$ is a subcoalgebra of $H^N(e_{\pm 1}, f_{\pm 1}, u, v)_1$ and the link-quiver $\mathrm{Q}(\operatorname{cf}(N))$ of $\operatorname{cf}(N)$ is a connected sub-quiver of $\mathrm{Q}(H^N(e_{\pm 1}, f_{\pm 1}, u, v))$. Thus there are only finitely many non-isomorphic indecomposable comodules with Loewy length $2$.\\
Denote
\begin{eqnarray*}
V_0&=&\span\{1, u, v, uv\}\\
V_{1}&=&\span\{g, gu, gv, guv\}\\
V_{i+1}&=&\span\{e_i, f_i, e_i u, f_i v, e_i v, f_i u, e_i uv, f_i uv\}) \;\;\text{for any}\;i\geq 1.
\end{eqnarray*}
For any indecomposable comodule $U$ with Loewy length $3$, $U$ is an extension of $Soc(U)$, which is a direct sum of simple comodules, and $U/ Soc(U)$, which has Loewy length $2$.
This leads to any indecomposable comodule $U$ with Loewy length $3$ is isomorphic to $V_i$ for some $i\geq 0$. Hence there are only finitely many non-isomorphic indecomposable comodules with Loewy length $3$.\\
In conclusion, $H^N(e_{\pm 1}, f_{\pm 1}, u, v)$ is of finite corepresentation type.
\end{proof}
We conclude this section by point out that $H(e_{\pm 1}, f_{\pm 1}, u, v)$ is of discrete corepresentation type.
\begin{proposition}
$H(e_{\pm 1}, f_{\pm 1}, u, v)$ is a Hopf algebra with the dual Chevalley property of discrete corepresentation type.
\end{proposition}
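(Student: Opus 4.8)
The first assertion is already established above, so the plan is to concentrate on showing discreteness; the idea is that the finite-dimensional subcoalgebras $H^N(e_{\pm 1}, f_{\pm 1}, u, v)$, $N\ge 1$, form a cofinal chain of subcoalgebras of finite corepresentation type along which discreteness can be transferred, exactly as in the converse half of the proof of Theorem~\ref{Thm:H10finite}. First I would record that $H^N(e_{\pm 1}, f_{\pm 1}, u, v)\subseteq H^{N+1}(e_{\pm 1}, f_{\pm 1}, u, v)$ and $\bigcup_{N\ge 1}H^N(e_{\pm 1}, f_{\pm 1}, u, v)=H(e_{\pm 1}, f_{\pm 1}, u, v)$: since $H(e_{\pm 1}, f_{\pm 1}, u, v)=H(e_{\pm 1}, f_{\pm 1}, u, v)_2$ it is spanned by $\{e_i,f_i,e_iu,f_iu,e_iv,f_iv,e_iuv,f_iuv\mid i\in\Bbb{Z}\}$, the space $C_{N+1}=\span\{e_{N+1},f_{N+1},e_{-N-1},f_{-N-1}\}$ consists of spanning vectors already present in $H^{N+1}(e_{\pm 1}, f_{\pm 1}, u, v)$, and every spanning vector lies in $H^N(e_{\pm 1}, f_{\pm 1}, u, v)$ once $N$ is large. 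In particular any finite-dimensional subspace of $H(e_{\pm 1}, f_{\pm 1}, u, v)$ is contained in some $H^N(e_{\pm 1}, f_{\pm 1}, u, v)$.

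Next, fix a finite dimension vector $\underline{d}$ and set $\operatorname{cf}(\underline{d})=\sum_{\underline{dim}(M)=\underline{d}}\operatorname{cf}(M)$, the sum running over all finite-dimensional right $H(e_{\pm 1}, f_{\pm 1}, u, v)$-comodules $M$ with $\underline{dim}(M)=\underline{d}$. Repeating the argument in the proof of Theorem~\ref{Thm:H10finite} (which invokes \cite[Lemma~2.6]{Iov18b}), $\operatorname{cf}(\underline{d})$ is a finite-dimensional subcoalgebra of $H(e_{\pm 1}, f_{\pm 1}, u, v)$, hence $\operatorname{cf}(\underline{d})\subseteq H^N(e_{\pm 1}, f_{\pm 1}, u, v)$ for some $N$ by the first step. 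Now any indecomposable right $H(e_{\pm 1}, f_{\pm 1}, u, v)$-comodule $M$ with $\underline{dim}(M)=\underline{d}$ satisfies $\operatorname{cf}(M)\subseteq\operatorname{cf}(\underline{d})\subseteq H^N(e_{\pm 1}, f_{\pm 1}, u, v)$, so $M$ is an indecomposable right $H^N(e_{\pm 1}, f_{\pm 1}, u, v)$-comodule; by Lemma~\ref{lem:HNfinite} there are only finitely many isomorphism classes of such comodules. As $\underline{d}$ was arbitrary, $H(e_{\pm 1}, f_{\pm 1}, u, v)$ is of discrete corepresentation type.

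Essentially all the work sits in Lemma~\ref{lem:HNfinite}, which is already proved, and in the finite-dimensionality of $\operatorname{cf}(\underline{d})$, which is the same black-box application of \cite[Lemma~2.6]{Iov18b} used in Theorem~\ref{Thm:H10finite}; granting those two inputs, the final statement follows from the purely formal cofinality argument above, so I do not expect a genuine obstacle beyond what has already been handled. The one point deserving a line of care is that each $H^N(e_{\pm 1}, f_{\pm 1}, u, v)$ is indeed a subcoalgebra of $H(e_{\pm 1}, f_{\pm 1}, u, v)$ (so that "comodule over $H^N(e_{\pm 1}, f_{\pm 1}, u, v)$" is meaningful), but this is exactly the setting fixed just before Lemma~\ref{lem:HNfinite}.
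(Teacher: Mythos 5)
Your proposal is correct and follows essentially the same route as the paper: exhaust $H(e_{\pm 1}, f_{\pm 1}, u, v)$ by the chain of finite-dimensional subcoalgebras $H^N(e_{\pm 1}, f_{\pm 1}, u, v)$, use Lemma \ref{lem:HNfinite} to get finite corepresentation type for each, and apply \cite[Lemma 2.6]{Iov18b} to see that $\operatorname{cf}(\underline{d})$ is finite-dimensional, hence contained in some $H^N(e_{\pm 1}, f_{\pm 1}, u, v)$. Your write-up merely makes the cofinality of the chain explicit, which the paper leaves implicit.
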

\begin{proof}
Indeed, any finite-dimensional subcoalgebra of $H(e_{\pm 1}, f_{\pm 1}, u, v)$ is a subcoalgebra of $H^N(e_{\pm 1}, f_{\pm 1}, u, v)$ for some $N$. Using Lemma \ref{lem:HNfinite}, we know that any finite-dimensional subcoalgebra of $H(e_{\pm 1}, f_{\pm 1}, u, v)$ is of finite corepresentation type.
Now for any finite dimension vector $\underline{d}$, denote by $\operatorname{cf}(\underline{d})$ the smallest subcoalgebra of $H^N(e_{\pm 1}, f_{\pm 1}, u, v)$ such that all the right $H$-comodules of dimension vector $\underline{d}$ have their coefficient coalgebra contained in $\operatorname{cf}(\underline{d})$.
Using \cite[Lemma 2.6]{Iov18b}, we can show that $\operatorname{cf}(\underline{d})$ is finite-dimensional, which follows that $\operatorname{cf}(\underline{d})$ is of finite corepresentation type. Therefore, the set of isomorphism classes of dimension vector $\underline{d}$ is finite. This implies that $H(e_{\pm 1}, f_{\pm 1}, u, v)$ is of discrete corepresentation type.
\end{proof}
\begin{remark}\label{rm:counterexample}
If we remove $uv=-vu$ from Definition \ref{def:Hefuv} and keep the rest of the Hopf algebra structure unchanged, we obtain a new Hopf algebra $H^\prime(e_{\pm 1}, f_{\pm 1}, u, v)$. Since $$H(e_{\pm 1}, f_{\pm 1}, u, v)_1=H^\prime(e_{\pm 1}, f_{\pm 1}, u, v)_1,$$ $H(e_{\pm 1}, f_{\pm 1}, u, v)$ and $H^\prime(e_{\pm 1}, f_{\pm 1}, u, v)$ have the same link quiver with each other. However, their corepresentation type are different. Indeed, for any $\lambda\in\k, i\in\Bbb{Z}$, denote
$$V_i(\lambda)=\span\{e_i, f_i, e_iu, f_iv, e_iv, f_iu, e_iuv+\lambda e_ivu, f_ivu+\lambda f_iuv\}.$$
We know that
\begin{eqnarray*}
&&\rho((e_i, f_i, e_iu, f_iv, e_iv, f_iu, e_iuv+\lambda e_ivu, f_ivu+\lambda f_iuv))\\
&=&(e_i, f_i, e_iu, f_iv, e_iv, f_iu, e_iuv+\lambda e_ivu, f_ivu+\lambda f_iuv)\widetilde{\otimes}\\
&&
\left(\begin{array}{cccccccc}
e_i&f_i&e_iu&f_iv&e_iv&f_iu&e_iuv+\lambda e_ivu&f_ivu+\lambda f_iuv\\
f_{-i}&e_{-i}&f_{-i}u&e_{-i}v&f_{-i}v&e_{-i}u& f_{-i}uv+\lambda f_{-i}vu& e_{-i}vu+\lambda e_{-i}uv\\
0&0&e_{i+1}&f_{i+1}&0&0&(1-\lambda)e_{i+1} v&(1-\lambda)f_{i+1}u\\
0&0& f_{-i-1}&e_{-i-1}&0&0&(1-\lambda)f_{-i-1}v&(1-\lambda)e_{-i-1}u\\
0&0&0&0&e_{i-1}&f_{i-1}&(\lambda-1)e_{i-1}u&(\lambda-1)f_{i-1}v\\
0&0&0&0&f_{-i+1}&e_{-i+1}&(\lambda-1)f_{-i+1}u&(\lambda-1)e_{-i+1}v\\
0&0&0&0&0&0&e_i&f_i\\
0&0&0&0&0&0&f_{-i}&e_{-i}
 \end{array}\right).
\end{eqnarray*}
Note that $\{V_i(\lambda)\}_{\lambda\in\k}$ is a family of non-isomorphic indecomposable right $H^\prime(e_{\pm 1}, f_{\pm 1}, u, v)$-comodules admitting the same dimension vector. Thus $H^\prime(e_{\pm 1}, f_{\pm 1}, u, v)$ is not of discrete corepresentation type.
\end{remark}

\section*{Acknowledgement}
The authors highly appreciate the referee's detailed comments, which have been invaluable in improving this paper. The second author was supported by National Key R$\&$D Program of China 2024YFA1013802 and NSFC 12271243.
\section*{}
\subsection*{Author Contributions}
All authors contributed to all aspects of this project.

\subsection*{Availability of data and materials}
Not applicable.
\section*{Declarations}
\subsection*{Competing interests}
The authors declare no competing interests.
\subsection*{Ethical Approval} Not applicable.

\end{document}